\newcommand{\PL}[1]{\private{\footnote{\textbf{Pascal: }{#1}}}}
\newtheorem{theo}{Theorem}[section]    
\newtheorem{de}[theo]{Definition}
\newtheorem{prop}[theo]{Proposition}
\newtheorem{lem}[theo]{Lemma}    
\newtheorem{cor}[theo]{Corollary}
\newtheorem{rem}[theo]{Remark}
\newcommand*{\defeq}{\mathrel{\vcenter{\baselineskip0.5ex \lineskiplimit0pt
                     \hbox{\scriptsize.}\hbox{\scriptsize.}}}%
                     =}
\newcommand{\delk}{\partial_W K}
\newcommand{\dela}{\partial A}
\newcommand{\delb}{\partial B}
\newcommand{\delt}{\partial T}
\newcommand{\delpt}{\partial_+ T}
\newcommand{\delmt}{\partial_- T}
\newcommand{\delot}{\partial_0 T}
\newcommand{\apl}{A_{PL}}
\newcommand{\proj}{\operatorname{proj}}
\newcommand{\Conf}{\operatorname{Conf}}
\newcommand{\btb}{B\otimes B}
\newcommand{\delw}{\partial W}
\newcommand{\delW}{\partial W}
\newcommand{\wst}{\overline{W\backslash T}}
\newcommand{\wsk}{W\backslash K}
\newcommand{\bq}{\mathbb{Q}}
\newcommand{\br}{\mathbb{R}}
\newcommand{\BR}{\mathbb{R}}
\newcommand{\quism}{\stackrel{\simeq}{\longrightarrow}}
\newcommand{\hoker}{\text{hoker }}
\newcommand{\troncan}{\tau^{\leq N} A}
\newcommand{\sn}{s^{-n}\#}
\newcommand{\sdn}{s^{-2n}\#}
\newcommand{\bs}{\setminus}
\newcommand{\wh}{\widehat{\apl}}
\newcommand{\ahatv}{(\hat{A}\otimes \Lambda V,D)}
\newcommand{\ahat}{\hat{A}}
\newcommand{\troncon}{\tau^{\leq N} C(\varphi^!)}
\newcommand{\tronc}{\tau^{\leq N}}
\newcommand{\Tronc}{\tau^{\leq N}}
\def\commutatif{\ar@{}[rd]|{\circlearrowleft}}
\def\pullback{\ar@{}[rd]|{\textrm{pullback}}}
\def\comhotopie{\ar@{}[rd]|{\sim}}
\newcommand{\oid}{\operatorname{id}}
\newcommand{\bth}{\bar\theta}
\newcommand{\qi}{\stackrel{\simeq}\longrightarrow}
\newcommand{\betab}{\xymatrix{\beta \colon B \ar@{->>}[r]& \delb}}
\newcommand{\BQ}{\mathbb{Q}}
\newcommand{\id}{\operatorname{id}}
\begin{document}

\title[Rational model of a complement]{Rational models of the complement \\ of a subpolyhedron in a manifold \\ with boundary}
\author{Hector Cordova Bulens}
\address{H.C.B. and P.L.: IRMP, Universit\'e catholique de Louvain, 2 Chemin du Cyclotron, B-1348 Louvain-la-Neuve, Belgium}
\email{hector.cordova@uclouvain.be}
\author{Pascal Lambrechts}
\email{pascal.lambrechts@uclouvain.be}
\author{Don Stanley}
\address{D.S.: University of Regina, Department of Mathematics\\
College West, Regina, CANADA}%
\email{stanley@math.uregina.ca}%
\thanks{H.C.B. and P.L. are supported by the belgian federal fund PAI Dygest}%
\date{18 May 2015}
\keywords{Lefschetz duality,
Sullivan model,
Configuration spaces}%
\subjclass[2010]{55P62, 55R80}

\begin{abstract}
Let $W$ be a compact simply connected triangulated manifold with boundary and $K\subset W$ be a subpolyhedron.  We construct an algebraic model of the rational homotopy type of $\wsk$ out of a model of the map of pairs $(K,K \cap \delw)\hookrightarrow (W,\delw)$ under some high codimension hypothesis.

We deduce the rational homotopy invariance of the configuration space
of two points in a compact manifold with boundary under 2-connectedness hypotheses. Also, we exhibit
nice explicits models of these configuration spaces for a large class
of compact manifolds.

\end{abstract}
\maketitle

\section{Introduction}

\bigskip

Let $W$ be a compact and simply-connected manifold with boun\-dary (in
this paper all manifolds are triangulated). Let $f\colon K
\hookrightarrow W$ be the inclusion of a subpolyhedron. The first goal
of this paper is to determine the rational homotopy type of the complement
$\wsk$.  We will then apply this to deduce the rational homotopy
type of the configuration space of two points in a manifold with
boundary under 2-connectedness hypotheses. Hence this paper extends
the results of \cite{LaSt:PE}
and \cite{LaSt:FM2}
to the case of manifolds \emph{with boundary}.

The main result of \cite{LaSt:PE} is an explicit description of the rational homotopy type of $\wsk$ when $W$ is a \emph{closed} manifold and $K$ is a subpolyhedron of codimension  $ \geq(\dim W)/2 +2$. This rational homotopy type depends only on the rational homotopy class of the inclusion $K \hookrightarrow W$ (\cite[Theorem 1.2]{LaSt:PE}).

The situation for manifolds with boundary is different. For example,
let $W$   be an $n$-dimensional disk $D^n$ and $K$ be a point. If $K$ is
embedded in the interior of $D^n$ then $\wsk \simeq S^{n-1}$. On the
contrary, if $K$ is embedded in the boundary of $D^n$ then $\wsk
\simeq *$.
Hence the complements $\wsk$ have different rational homotopy types, 
 although the two inclusions $K\hookrightarrow W$ are homotopic.
    These examples show that we need more information to determine the
    rational homotopy type of $\wsk$. Our main result is that the only
    extra information needed is related to the inclusion of $\delw
    \cap K$ in $\delw$. More precisely, we have the following result
\begin{theo}[Corollary \ref{C:mod_conndim} and Theorem
  \ref{T:mod_conndim}]
\label{T:intro1}
Let $W$ be a compact simply connected triangulated manifold with boundary and let $K$ be a subpolyhedron in $W$. 

Assume that 
\begin{equation}
\label{e:codim_intro}
\dim W \geq 2\dim K + 3.
\end{equation}
Then the rational homotopy type of $\wsk$ depends only on the rational homotopy type of the square of inclusions 
\begin{equation}
\label{d:intro}
\xymatrix{ (K\cap \delw) \ar@{^{(}->}[r]\ar@{^{(}->}[d] & \delw\ar@{^{(}->}[d] \\
K \ar@{^{(}->}[r] & W.}
\end{equation}
Moreover a CDGA model of $\wsk$  (that is, an algebraic model in the
sense of Sullivan of this rational homotopy type, see Section \ref{s:RHT}) can be explicitely constructed out of any CDGA model of Diagram \eqref{d:intro}.
\end{theo}

Actually we will see that the high codimension hypothesis
\eqref{e:codim_intro} can be weakened. Indeed 
we will establish a sharp \emph{unknotting condition}, which
is an inequality relating the connectivity of the inclusion maps and the
dimensions of the manifold and the subpolyhedron (see \eqref{unknot2}
in Corollary \ref{C:mod_conndim}), 
under which we still
get a CDGA model of the complement.

There is an interesting application of this theorem to the study of
confi\-guration spaces of 2 points in $W$,
\[\Conf(W,2) \defeq \{(x_1, x_2) \in W\times W \colon x_1 \neq x_2\}.\]
 Indeed this configuration space is the complement
\[\Conf(W,2) = W\times W \bs \Delta (W)\]
where $\Delta \colon W \hookrightarrow W\times W$ is the diagonal
embedding. We will deduce from Theorem \ref{T:intro1}  the following result. 
\begin{theo}[Corollary \ref{c:rht_fw2}]
\label{T:intro2}
Let $W$ be a $2$-connected compact mani\-fold with a $2$-connected or
empty boundary. The rational homotopy type of the configuration space 
$\Conf(W,2)$
depends only on the rational homotopy type of the pair $(W,\delw)$. 
\end{theo}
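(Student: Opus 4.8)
The plan is to deduce Theorem~\ref{T:intro2} from Theorem~\ref{T:intro1} by recognizing the configuration space $\Conf(W,2)$ as the complement of a subpolyhedron in a manifold with boundary, and then checking that the required input data is determined by the rational homotopy type of the pair $(W,\delw)$. Concretely, I would set $W' \defeq W\times W$, which is a compact manifold with boundary $\partial(W\times W) = (\delw\times W)\cup(W\times\delw)$, and take $K \defeq \Delta(W)$ to be the diagonal, so that $\Conf(W,2)=W'\bs K$. The first step is to verify that this data meets the hypotheses of Theorem~\ref{T:intro1}, or rather of the sharpened unknotting condition alluded to in Corollary~\ref{C:mod_conndim}. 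Here $\dim K=\dim W=:n$ and $\dim W'=2n$, so the naive codimension inequality \eqref{e:codim_intro} reads $2n\geq 2n+3$, which fails; hence the crux is that the $2$-connectedness hypotheses on $W$ and $\delw$ must be exactly what is needed to satisfy the weaker connectivity-based unknotting condition in place of \eqref{e:codim_intro}. I expect this connectivity bookkeeping to be the main obstacle, so I would carry it out carefully.

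The second step is to identify the relevant square of inclusions \eqref{d:intro} in this setting and show that its rational homotopy type is determined by that of $(W,\delw)$. The intersection $K\cap \partial W' = \Delta(W)\cap\bigl((\delw\times W)\cup(W\times\delw)\bigr)$ consists of those $(x,x)$ with $x\in\delw$, i.e.\ $\Delta(\delw)$. Thus the square \eqref{d:intro} becomes
\[
\xymatrix{ \Delta(\delw) \ar@{^{(}->}[r]\ar@{^{(}->}[d] & (\delw\times W)\cup(W\times\delw)\ar@{^{(}->}[d] \\
\Delta(W) \ar@{^{(}->}[r] & W\times W,}
\]
and I would argue that every object and map in this square is functorially built out of $W$ and $\delw$: the bottom map is the diagonal of $W$, the top map is the diagonal of $\delw$, the vertical maps are products and pushouts of the inclusion $\delw\hookrightarrow W$. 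Consequently a CDGA model of this square can be manufactured from a CDGA model of the single map $\delw\hookrightarrow W$, i.e.\ from the rational homotopy type of the pair $(W,\delw)$, using the compatibility of diagonals, products, and homotopy pushouts with the chosen models.

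The third and final step is to invoke Theorem~\ref{T:intro1}: once the unknotting condition holds and the square \eqref{d:intro} has a model determined by $(W,\delw)$, the theorem produces a CDGA model of $W'\bs K=\Conf(W,2)$ that depends only on that data, which is precisely the claimed rational homotopy invariance. The delicate points I anticipate are, first, verifying that the $2$-connectedness assumptions translate into the precise numerical inequality of \eqref{unknot2} for the diagonal embedding (this likely uses that the diagonal has connectivity governed by that of $W$ and that $\partial W'$ inherits connectivity from $\delw$), and second, checking that the union $(\delw\times W)\cup(W\times\delw)$ and its inclusions are handled correctly at the model level, since this piece of $\partial W'$ is a homotopy pushout rather than a manifold product. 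Granting the earlier results of the paper, these are the only two genuine verifications; the rest is formal functoriality of models.
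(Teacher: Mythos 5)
Your proposal is correct and follows essentially the same route as the paper: the paper likewise realizes $\Conf(W,2)$ as $W\times W\setminus\Delta(W)$, notes that $2$-connectedness of $W$ and $\delw$ makes $\Delta$ and $\partial\Delta$ $2$-connected so that the unknotting condition \eqref{unknot2} holds with $r=2$ (exactly compensating the failure of the naive codimension bound), and then shows (Proposition \ref{modele_carre_FW2}, via the pushout decomposition of $\partial(W\times W)$ and a surjective model $\beta\colon B\twoheadrightarrow\delb$ of $\delw\hookrightarrow W$) that the square \eqref{diagFW2_2} has a CDGA model built functorially from the pair $(W,\delw)$. The two verifications you flag as delicate are precisely the ones the paper carries out in Theorem \ref{t:modele_adgc_FW2_1} and Lemma \ref{lem_po4}.
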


In \cite{CLS:pretty} we prove that a large class of compact manifolds
with boundary admit CDGA models of a special form that we call 
\emph{surjective pretty models}. This class contains in particular 
even-dimensional
disk bundles over a closed manifold and complements of  high
codimensional polyhedra in  closed manifolds.
As a consequence, such manifolds admit a
CDGA model of the form $P/I$ where $P$ is a Poincar\'e duality CDGA
and $I$ is some differential ideal. Poincar\'e duality CDGAs come with a
natural \emph{diagonal class} $\Delta \in (P\otimes P)^n$.  We then
get the following elegant model for the configuration space (see
Section \ref{S:ConfPretty} for more details)
\begin{theo}(Theorem \ref{T:modele_FW2}) 
\label{T:introFW2}
  Let $W$ be a compact
  mani\-fold of dimension $n$ with boundary and assume that $W$ and $\delw$ are
  $2$-connected.
 If $(W,\delw)$ admits a surjective pretty model in the sense of
 \cite{CLS:pretty} then a CDGA model of $\Conf (W, 2)$ is given by 
\[\left(P/I \otimes P/I \right)\oplus_{\overline{\Delta^!}} ss^{-n} P/I,\]
where $P$ is the Poincar\'e duality CDGA and $I$ the ideal associated
to the pretty model, and 
$\overline{\Delta^!}$ is a map induced by multiplication by
the diagonal class $\Delta\in (P\otimes P)^n$.
\end{theo}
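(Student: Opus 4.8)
The plan is to derive Theorem \ref{T:introFW2} as a direct specialization of the general model given by Theorem \ref{T:intro1} (and its explicit construction, Theorem \ref{T:mod_conndim}) to the case $K = \Delta(W)$, $W \rightsquigarrow W\times W$. First I would observe that $\Conf(W,2) = (W\times W)\setminus \Delta(W)$ is exactly a complement of the type treated by the main theorem, with the ambient manifold being $W\times W$ (of dimension $2n$, with boundary $\partial(W\times W) = (\partial W \times W)\cup(W\times \partial W)$) and the subpolyhedron being the diagonal copy of $W$. The $2$-connectedness hypotheses on $W$ and $\delw$ are precisely what is needed to guarantee that the square of inclusions for $(\Delta W, \Delta W \cap \partial(W\times W))\hookrightarrow(W\times W, \partial(W\times W))$ satisfies the unknotting/codimension condition of Corollary \ref{C:mod_conndim}; this is the content invoked from Theorem \ref{T:intro2}, so the rational homotopy type of $\Conf(W,2)$ does depend only on the rational homotopy type of the pair $(W,\delw)$, hence on a model of it.

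Next I would feed a surjective pretty model into the explicit construction. By hypothesis $(W,\delw)$ has a surjective pretty model, which by \cite{CLS:pretty} means that $W$ admits a CDGA model of the form $P/I$ with $P$ a Poincaré duality CDGA of formal dimension $n$ and $I$ a differential ideal encoding the boundary data. The key input is that the diagonal map $\Delta\colon W\hookrightarrow W\times W$ is then modelled, at the level of these algebras, by the multiplication $P/I \otimes P/I \to P/I$, and the relevant dual "shriek" map $\overline{\Delta^!}$ is induced by multiplication with the diagonal class $\Delta\in (P\otimes P)^n$ coming from Poincaré duality on $P$. The point of using a \emph{Poincaré duality} model $P$ is that it carries this canonical diagonal class, which is exactly the extra structure the general construction requires in order to write down the shriek map on the nose rather than merely up to homotopy.

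Then I would identify the general complement model with the stated mapping-cone type expression. The general construction of Theorem \ref{T:mod_conndim} produces a model of $\wsk$ as (a suitable truncation of) a mapping cone of a shriek map $\varphi^!$, of the form $A \oplus_{\varphi^!} s\hat Q$. Specializing $A = P/I\otimes P/I$ (a model for $W\times W$) and identifying $\hat Q$ with the desuspended dual $s^{-n}P/I$ so that $ss^{-n}P/I$ appears, the shriek map becomes $\overline{\Delta^!}$, multiplication by the diagonal class. This yields precisely
\[
\left(P/I \otimes P/I\right)\oplus_{\overline{\Delta^!}} ss^{-n}\,P/I,
\]
and one checks that the differential and multiplication induced on this cone agree with those produced by the general machine.

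The main obstacle, I expect, is verifying that the abstract shriek map $\varphi^!$ supplied by the general construction coincides (up to the appropriate quasi-isomorphism) with multiplication by the explicit diagonal class $\Delta\in(P\otimes P)^n$, and that the Poincaré duality structure on $P$ survives passage to the quotient $P/I$ in the precise sense needed to make the cone a genuine CDGA. In other words, the work lies in reconciling the intrinsic duality map of the general theorem with the concrete diagonal class of the pretty model, and in confirming that the surjective pretty model hypothesis is exactly strong enough to let the truncation in Theorem \ref{T:mod_conndim} be omitted (or absorbed) so that the clean closed form above is valid. The remaining verifications—degree bookkeeping, the sign conventions in $ss^{-n}$, and compatibility of differentials—are routine once this identification is in place.
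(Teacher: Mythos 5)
Your overall route is exactly the paper's: view $\Conf(W,2)$ as the complement of $K=\Delta(W)$ in $W\times W$, apply the general complement theorem under the $2$-connectedness hypotheses (this is Theorem \ref{t:modele_adgc_FW2_1}, i.e.\ Theorem \ref{T:mod_conndim} with $r=2$), feed in the surjective pretty model, and finally remove the truncation $\tau^{\leq 2n-3}$ by a degree argument. However, two steps that you either skip or explicitly defer are precisely where the substance of the paper's proof lies, so as it stands the proposal has genuine gaps rather than being a complete argument.

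First, Theorem \ref{T:mod_conndim} takes as input a CDGA model of the \emph{whole} square of inclusions, so you need a model not only of the diagonal $\Delta\colon W\hookrightarrow W\times W$ but also of $\partial\Delta\colon \delw\hookrightarrow \partial(W\times W)$. Since $\partial(W\times W)=(W\times\delw)\cup_{\delw\times\delw}(\delw\times W)$ is a pushout rather than a product, producing this model is nontrivial: the paper does it in Proposition \ref{modele_carre_FW2} (via Lemmas \ref{lem_po4} and \ref{l:PBCDGA}), showing that if $\beta\colon B\epi\delb$ is a \emph{surjective} model of $\delw\hookrightarrow W$, then the square is modelled by $\mu\colon\btb\to B$ together with $\tilde\mu\colon \btb/(\ker\beta\otimes\ker\beta)\to\delb$. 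Without this step you do not know the homotopy kernels of the two vertical maps (they are $\ker\beta\otimes\ker\beta$ and $\ker\beta$), nor that the map between them is the restricted multiplication $\bar\mu$, so you cannot even name the map whose dual $\sdn\bar\mu$ the general machine asks you to realize. Second, the identification of that dual with multiplication by the truncated diagonal class --- which you flag as ``the main obstacle'' and leave open --- is the other key lemma (Lemma \ref{lem_PI_mubar}): the Poincar\'e duality isomorphism $\theta_P\colon P\cong\sn P$ descends, because $I=\varphi^!(\sn Q)$, to a $B\otimes B$-dgmodule isomorphism $\bar\theta_P\colon P/I\cong\sn\ker\beta$, and commutativity of the comparison square is then checked on the single module generator $1\in P/I$. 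One must also verify that $\overline{\Delta}^!$ is balanced, so that the semi-trivial structure on the \emph{untruncated} cone $C(\overline{\Delta}^!)$ is a CDGA; only then does the degree argument (via Remark \ref{R:BB'} and $2$-connectedness) let you drop the truncation and obtain the clean closed form. These verifications are not routine bookkeeping; they are the proof.
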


When $W$ is a closed manifold, we have $I=0$ and the model of Theorem
\ref{T:introFW2}
is exactly that of \cite{LaSt:FM2}.

In the paper \cite{CLS:ConfWk} in preparation we will show how to
build a model (of dgmodules) of $\Conf(W,k)$, $k\geq2$, which 
enables to compute effectively the homology of the space of configurations of
any number of points in a manifold with boundary. This model will be
of the form
\[\left(\frac{(P/I)^{\otimes k}\otimes\Lambda(g_{ij}:1\leq i<j\leq
    k)}{\textrm{(Arnold and symmetry
      relations)}}\,,\,d(g_{ij})=\pi_{ij}^*(\overline{\Delta})\right),
\]
mimicking the model in \cite{LaSt:remFMk}.

Here is the plan of the paper. Section \ref{s:map_con} contains a very short
review on rational homotopy theory, the notion of truncation of a
CDGA, a discussion on CDGA structures on mapping cones, and the notion
of homotopy kernel. Section \ref{s:lef_dual} is a first step to the
understanding of a dgmodule model of the complement $W\setminus K$ and
in  Section \ref{s:complement} we establish a CDGA model of that complement.
In Section \ref{S:ConfW2} we apply the previous results to the model of
 the configuration space of $2$ points in compact manifolds, with
 some developments of the examples of configuration spaces on a disk
 bundle or in the complement of a polyhedron in a closed manifold.

\section{Truncation of dgmodules and CDGA's, and
    CDGA structures on mapping cones.}
\label{s:map_con}
This section contains a quick review on some classical topics that we
will need with some special development. In particular in
\ref{S:truncation_cdga}
we explain some notion of truncation of a CDGA, and in
\ref{s:semi_trivial} we show how to endow a mapping cone (or
its truncation) with the structure of a CDGA.

\subsection{ Rational homotopy theory} \label{s:RHT}%
In this paper we will use the standard tools and results of rational
homotopy theory,  following \cite{FHT:RHT}. Recall that $\apl$ is
the Sullivan-de Rham functor and that for a 1-connected space of
finite type, $X$,
$\apl(X)$ is a commutative differential graded algebra (CDGA for
short), which completely encodes the rational homotopy type of $X$.  
Any CDGA  weakly equivalent to $\apl(X)$ is called a \emph{CDGA model
  of $X$}. All our dgmodules and CDGAs are over the field $\bq$.

\subsection{Truncation of a dgmodule}

The classical truncation of a co\-chain complex, i.e. $\bq$-dgmodule, $C$, is classicaly defined by (see \cite[Section 1.2.7]{Wei:HA}) 
\begin{equation}
\label{Eq:tronWeibel}
(\hat\tau^{\leq N} C)^i = 
\begin{cases} 
C^i & \text{ if } i<N\\
C^N \cap \ker d & \text{ if } i=N \\
0& \text{ if } i >N 								
\end{cases}
\end{equation}
This comes with an inclusion 
\[\iota \colon \hat\tau^{\leq N} C \hookrightarrow C\]
which induces isomorphisms $H^i(\iota)$, for $i\leq N$, and such that $H^{>N}$ $(\hat\tau^{\leq N} C) =0$.

When $R$ is an $A$-dgmodule, the   truncation $\hat\tau^{\leq N} R$ is not necessarily an $A$-dgmodule. In that case a better replacement would be to take for the truncation a quotient $R/I$ where $I$ is a suitable $A$-dgsubmodule such that $I^i=R^i$ for $i>N$. In this paper we will use the following:
\begin{de}
Let $R$ be an $A$-dgmodule and let $N$ be a positive integer. A \emph{truncation below degree $N$ of $R$} is an $A$-dgmodule, $\Tronc R$, and a morphism $\pi \colon R \to~\Tronc R$ of $A$-dgmodules verifying the two following conditions:
\begin{enumerate}
\item $(\Tronc R)^{>N} = 0$ and $(\Tronc R)^{<N}\cong R^{<N}$, and
\item the morphism $\pi$ is a surjection of $A$-dgmodules such that $H^i(\pi)$ is an isomorphism for $0\leq i \leq N$.
\end{enumerate}

\end{de}

Contrary to $\hat \tau^{\leq N}$ from \eqref{Eq:tronWeibel}, our
truncation $\Tronc R$ is not unique and is not a functorial
construction.

\subsection{Truncation of a CDGA}\label{s:truncation_cdga}
\label{S:truncation_cdga}

\begin{de}
Let $A$ be a connected CDGA.  \emph{A CDGA truncation below degree $N$ of $A$}  is a truncation of $A$-dgmodule $ (\Tronc A, \pi)$  such that $\Tronc A$ is a CDGA and $\pi:A\to \hat\tau^{\leq N} A$ is a CDGA morphism.
\end{de}

Equivalently a CDGA truncation can be seen as a projection $\pi \colon A \to A/I$ where $I$ is an ideal of $A$ such that $I^{<N}=0$, $I^{>N}=A^{>N}$ and $I^N\oplus (\ker d \cap A^N)= A^N$. 

\begin{prop}
Any two CDGA truncations below degree $N$ of a given connected CDGA are weakly equivalent.
\end{prop}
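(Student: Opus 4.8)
The plan is to reduce the statement to a single difference in top degree and then to interpolate between the two truncations using the CDGA interval $\Lambda(t,dt)$. Using the equivalent description of a CDGA truncation recalled just before the proposition, I write the two given truncations as projections $\pi\colon A\epi A/I$ and $\pi'\colon A\epi A/I'$ onto quotients by differential ideals $I,I'$ with $I^{<N}=I'^{<N}=0$, $I^{>N}=I'^{>N}=A^{>N}$, and $I^N\oplus Z^N=A^N=I'^N\oplus Z^N$, where $Z^N=\ker d\cap A^N$. Thus $I$ and $I'$ agree except in degree $N$, where $I^N$ and $I'^N$ are two complements of the cocycle space $Z^N$ in $A^N$. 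One checks at once that there is a canonical isomorphism of cochain complexes between $A/I$ and $A/I'$ (the identity below degree $N$, and in degree $N$ the composite $A^N/I^N\cong Z^N\cong A^N/I'^N$ given by unique cocycle representatives). The whole difficulty is that this isomorphism is not multiplicative, so it cannot be used directly; instead I must produce a genuine zig-zag of CDGA quasi-isomorphisms.

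First I would record the linear difference. Since $I^N$ and $I'^N$ are two complements of $Z^N$, there is a linear map $\sigma\colon A^N\to A^N$ with $\sigma|_{Z^N}=0$ and $\operatorname{im}\sigma\subseteq Z^N$ such that $I'^N=(\id+\sigma)(I^N)$; because $\operatorname{im}\sigma\subseteq Z^N\subseteq\ker\sigma$ we get $\sigma^2=0$, so $\id+t\sigma$ is invertible over $\bq[t]$ and $(\id+t\sigma)(I^N)$ is a complement of $Z^N$ for every value of the parameter. Next I would build the interpolating object. Let $R=A\otimes\Lambda(t,dt)$ be the standard CDGA interval, with $|t|=0$ and $d(t)=dt$, so that the two evaluations $\varepsilon_0,\varepsilon_1\colon R\to A$ (sending $t\mapsto 0,1$ and $dt\mapsto 0$) are CDGA quasi-isomorphisms. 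Define a graded ideal $\mathcal I\subseteq R$ by $\mathcal I^{<N}=0$, $\mathcal I^{>N}=R^{>N}$, and $\mathcal I^N=$ the $\bq[t]$-submodule generated by $(\id+t\sigma)(I^N)$ inside $A^N\otimes\bq[t]$. Then $\mathcal I$ is a differential ideal: closure under $d$ is automatic since $d(\mathcal I^N)\subseteq R^{N+1}=\mathcal I^{N+1}$, and the ideal condition reduces to $\mathcal I^N$ being a $\bq[t]$-submodule, which holds by construction. Hence $\mathcal C\defeq R/\mathcal I$ is a CDGA.

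The point of working inside the honest CDGA $R$ is that the evaluations descend: since $\varepsilon_0(\mathcal I)=I$ and $\varepsilon_1(\mathcal I)=I'$, the maps $\varepsilon_0,\varepsilon_1$ induce CDGA morphisms $\bar\varepsilon_0\colon\mathcal C\to A/I$ and $\bar\varepsilon_1\colon\mathcal C\to A/I'$. Because these are algebra maps by construction, I never have to verify multiplicativity by hand — which is exactly the obstruction that blocked the naive complex isomorphism above. It then only remains to prove that $\bar\varepsilon_0$ and $\bar\varepsilon_1$ are quasi-isomorphisms, and then $A/I\xleftarrow{\bar\varepsilon_0}\mathcal C\xrightarrow{\bar\varepsilon_1}A/I'$ is the desired zig-zag, showing that the two truncations are weakly equivalent.

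The main obstacle is precisely this last verification. I would prove it by comparing the short exact sequences $0\to\mathcal I\to R\to\mathcal C\to 0$ and $0\to I\to A\to A/I\to 0$ along $\varepsilon_0$: since $\varepsilon_0\colon R\to A$ is a quasi-isomorphism, the five lemma reduces the claim for $\bar\varepsilon_0$ to showing that $\varepsilon_0\colon\mathcal I\to I$ is a quasi-isomorphism (and symmetrically for $\bar\varepsilon_1$). For this one computes $H^*(\mathcal I)$. In degree $N$, a short calculation using the linear independence of the powers of $t$ shows $\ker(d\colon\mathcal I^N\to R^{N+1})=0$, so $H^N(\mathcal I)=0=H^N(I)$; in degrees $>N$ one must check that replacing the full bottom term of $R^{\geq N}$ by the twisted module $\mathcal I^N$ does not change the cohomology, which is where the contractibility of $\Lambda(t,dt)$ enters, corrected by the twist $\sigma$ (the relation $\sigma^2=0$ guaranteeing that a suitably adapted contracting homotopy preserves $\mathcal I$). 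Controlling the cohomology of this twisted ideal is the only genuinely technical step; everything else is formal.
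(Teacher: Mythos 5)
Your construction is sound up to and including the descent of the evaluation maps: $\mathcal{I}$ is indeed a differential ideal, $\mathcal{C}=R/\mathcal{I}$ is a CDGA, and $\varepsilon_0,\varepsilon_1$ induce CDGA morphisms $\bar\varepsilon_0\colon\mathcal{C}\to A/I$ and $\bar\varepsilon_1\colon\mathcal{C}\to A/I'$. The fatal problem is the step you defer at the end: the claim that $\varepsilon_0\colon\mathcal{I}\to I$ is a quasi-isomorphism is not merely "technical" — it is false whenever $Z^N=\ker d\cap A^N\neq 0$ (and if $Z^N=0$ the proposition is vacuous, since then $I=I'$ is forced). Indeed, take $0\neq b\in Z^N$ and consider $b\otimes dt\in\mathcal{I}^{N+1}=R^{N+1}$. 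It is a cocycle and $\varepsilon_0(b\otimes dt)=0$. But it is not a coboundary in $\mathcal{I}$: writing $u=\sum_i p_i(t)\,(e_i+t\sigma(e_i))\in\mathcal{I}^N$ with $\{e_i\}$ a basis of $I^N$, the component of $du$ in $A^{N+1}\otimes\bq[t]$ is $\sum_i de_i\otimes p_i(t)=\sum_j d\bigl(\sum_i p_{ij}e_i\bigr)\otimes t^j$, and since $\ker d\cap I^N=Z^N\cap I^N=0$ this vanishes only if all $p_{ij}=0$, i.e. $u=0$. So $d(\mathcal{I}^N)$ contains no nonzero element of $A^N\otimes\bq[t]\,dt$, the class $[b\otimes dt]\in H^{N+1}(\mathcal{I})$ is nonzero, and $H^{N+1}(\varepsilon_0|_{\mathcal{I}})$ is not injective. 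By your own five-lemma comparison (with $\varepsilon_0\colon R\to A$ a quasi-isomorphism), $\bar\varepsilon_0$ then cannot be a quasi-isomorphism either. Note that the twist is irrelevant: the argument applies verbatim with $\sigma=0$, i.e. the interpolation object fails already when $I=I'$. The root cause is that truncation-type ideals are incompatible with $-\otimes\Lambda(t,dt)$: the acyclicity of $\Lambda(t,dt)$ rests on pairing $t^j$ against $t^{j-1}dt$ across adjacent degrees, and your $\mathcal{I}$ contains the $dt$-terms in degree $N+1$ but not their antiderivatives $b\otimes\int_0^t p$ in degree $N$, so no contracting homotopy, adapted or not, can preserve it.

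To see the failure concretely on the quotient side, take $A=(\Lambda(c,u,v),d)$ with $|c|=|u|=2$, $|v|=3$, $dc=dv=0$, $du=v$, and $N=2$; then $Z^2=\bq c$, $I=\bq u\oplus A^{\geq 3}$, $I'=\bq(u+c)\oplus A^{\geq 3}$, $\sigma(u)=c$. Here $\mathcal{C}^1=\bq[t]\,dt$ is killed by $d$ and $\mathcal{C}^{>2}=0$, so
\begin{equation*}
H^2(\mathcal{C})\;=\;\mathcal{C}^2\;=\;\frac{(\bq c\oplus\bq u)\otimes\bq[t]}{\bq[t]\,(u+tc)}\;\cong\;\bigoplus_{j\geq 0}\bq\,[c\otimes t^j],
\end{equation*}
which is infinite dimensional, while $H^2(A/I)=\bq$. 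The paper avoids this issue entirely by arguing in the opposite direction: instead of interpolating between the two truncations, it builds a single relative Sullivan extension $\iota\colon A\rightarrowtail(A\otimes\Lambda V,D)$ with $V=V^{\geq N}$, $H^{\leq N}(\iota)$ an isomorphism and $H^{>N}(A\otimes\Lambda V,D)=0$, and observes that \emph{every} truncation $\pi\colon A\to\tau^{\leq N}A$ receives a CDGA quasi-isomorphism $m$ from it, defined by $m|_A=\pi$ and $m(V)=0$ (well defined precisely because $V=V^{\geq N}$ and $(\tau^{\leq N}A)^{>N}=0$); both truncations are then weakly equivalent through this one object. If you want to rescue an interpolation argument, you would need $\mathcal{I}^N$ to be a complement of the cocycles of all of $R^N$ (not just of $A^N\otimes\bq[t]$) that is still a $\bq[t]$-submodule evaluating into $I$ and into $I'$; arranging this is the real difficulty, and it is not achieved by your construction.
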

\begin{proof}

Let $A$ be a connected CDGA and $N\in \mathbb{N}$.
It is easy to construct a relative Sullivan model 
\[\xymatrix{\iota \colon A \ar@{ >->}[r] & (A\otimes \Lambda V, D)}\]
such that $H^{\leq N} (\iota)$ is an isomorphism, $V=V^{\geq N}$ and $H^{>N} (A\otimes \Lambda V,D)=0$. Indeed, one builds inductively $V=V^{\geq N}$ by adding generators to eliminate all the homology in degrees $>N$. It is straightforward to check that any CDGA truncation $\pi \colon A \to \tronc A$ factors as follows
\[\xymatrix{A \ar[rr]^{\pi} \ar@{>->}[dr]_{\iota} &&\troncan \\
& (A\otimes \Lambda V, D) \ar[ur]_{m} &}\]
where $m(V)=0$. Since $H^{\leq N} (\iota)$ and $H^{\leq N} (\pi)$ are isomorphisms and $H^{>N}(A\otimes \Lambda V, D)= H^{>N}(\troncan)=0$, we deduce that $m$ is a quasi-isomorphism. Therefore any two truncation of $A$ are quasi-isomorphic to $(A\otimes \Lambda V, D)$, and hence are weakly equivalent 

\end{proof}

\subsection{Semi-trivial C(D)GA structures on mapping cones}
\label{s:semi_trivial}
Let $A$ be a CDGA and 
 let $R$ be an $A$-dgmodule. We will denote by $s^k R$ the k-th suspension of $R$, i.e. $(s^k R)^p= R^{k+p}$, and for  a map of $A$-dgmodules, $f\colon R \to Q$,  we denote by $s^k f$ the  k-th suspension of  $f$. Furthermore, we will use $\#$ to denote the linear dual of a vector space, $\#V=hom(V,\mathbb{Q})$, and $\#f$ to denote the linear dual of a map $f$.  

If $f\colon Q \to R$ is an $A$-dgmodule morphism, \emph{the mapping cone} of $f$ is the $A$-dgmodule 
\[
C(f)\defeq (R\oplus_f sQ, \delta)
\]
defined by $R\oplus sQ$ as an $A$-module and with a differential
$\delta$ such that $\delta (r,sq) = (d_R (r) +f(q), -sd_Q(q))$. 

When
$R=A$, the mapping cone $C(f\colon Q\to A)$  can be equipped with a unique
commutative graded algebra (CGA) structure that extends the algebra
structure on $A$, respects  the $A$-dgmodule structure,  and such
that $(s q) \cdot (sq')=0$, for $q,q' \in Q$. We will call this
structure the \emph{semi-trivial CGA structure} on the mapping cone
$A\oplus_fsQ$ (see \cite[Section 4]{LaSt:PE}).      The following result is very useful to detect when this CGA structure is, in fact, a CDGA structure.

\begin{de}
\label{D:balanced}
Let $A$ be a CDGA. An $A$-dgmodule morphism $f\colon Q \to A$ is \emph{balanced} if :
\begin{equation}
f(x)y=xf(y) \ \ \text{    for all $x,y\in Q$.}
\label{Eq:anodyne}
\end{equation}
\end{de}
The importance of this notion comes from the following proposition.
\begin{prop}
\label{p:nagata}
Let $Q$ be an $A$-dgmodule and $f\colon Q \to A$ be an $A$-dgmodule morphism. If $f$ is balanced  then the mapping cone $C(f)=A\oplus_f sQ$ endowed with the semi-trivial CGA
structure is a CDGA. 
\end{prop}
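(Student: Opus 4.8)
The plan is to notice that, by the construction recalled just above the statement, the semi-trivial structure is \emph{already} a commutative graded algebra extending that of $A$, and that the cone differential $\delta$ already squares to zero because $C(f)$ is by construction an $A$-dgmodule. Hence the only axiom separating this CGA from a CDGA that remains to be checked is that $\delta$ is a derivation for the product, i.e. the graded Leibniz rule $\delta(xy)=\delta(x)\,y+(-1)^{|x|}x\,\delta(y)$. So the whole proof reduces to verifying this one identity.

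First I would cut the verification down to a handful of cases. Writing a general element of $C(f)=A\oplus sQ$ as a sum $a+sq$ with $a\in A$ and $sq\in sQ$, and using that both $\delta$ and the multiplication are bilinear, it suffices to check the Leibniz rule when $x$ and $y$ each range over $A$ and over $sQ$ separately, that is, on the four product types $a\cdot a'$, $a\cdot(sq)$, $(sq)\cdot a$ and $(sq)\cdot(sq')$. The first three are then disposed of for free: since $\delta(a,0)=(d_A a,0)$, the restriction of $\delta$ to $A$ is simply $d_A$, so the case $a\cdot a'$ is exactly the CDGA axiom for $A$; and the mixed cases $a\cdot(sq)$ and $(sq)\cdot a$ are nothing but the compatibility of $\delta$ with the $A$-module structure on the cone, which holds because $C(f)$ is an $A$-dgmodule (the case $(sq)\cdot a$ also following from $a\cdot(sq)$ by graded commutativity).

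The genuinely nontrivial case, and the one in which the hypothesis is used, is $(sq)\cdot(sq')$. Since this product is defined to be $0$, the left-hand side $\delta\big((sq)(sq')\big)$ vanishes, and I must show the right-hand side does too. Plugging in $\delta(sq)=f(q)-s\,d_Q q$ and using $(sQ)\cdot(sQ)=0$, every term carrying a factor $s\,d_Q q$ or $s\,d_Q q'$ dies, so the right-hand side collapses to $f(q)\cdot(sq')+(-1)^{|sq|}(sq)\cdot f(q')$. Expanding this through the module action on the suspension, and using graded commutativity to move $f(q')$ past $sq$, rewrites the whole expression as a suspension of $f(q)\,q'-q\,f(q')$ (up to an overall sign). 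At this point the balanced hypothesis \eqref{Eq:anodyne}, namely $f(q)\,q'=q\,f(q')$, is exactly the identity that makes this vanish, which finishes the derivation check and upgrades the semi-trivial CGA to a CDGA.

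The one delicate step, and the place I would be most careful, is the sign bookkeeping in that last computation: one must confirm that the signs coming from the module structure on $sQ$, from the commutativity isomorphism exchanging $sq$ and $f(q')$, and from the $-s\,d_Q$ convention in the cone differential conspire to produce precisely the \emph{difference} $f(q)\,q'-q\,f(q')$ rather than some sum with a stray sign. Once the conventions are pinned down this is routine, and it also shows transparently that balancedness is the only property of $f$ that enters, so that the proposition is sharp: the case $(sq)(sq')$ is the sole obstruction to the derivation property.
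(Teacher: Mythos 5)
Your proof is correct and takes essentially the same approach as the paper: the paper's proof likewise observes that the graded Leibniz rule is the only axiom not automatically satisfied and that it follows from the balanced hypothesis \eqref{Eq:anodyne}, deferring the explicit verification to \cite[Proposition 2.2]{CLS:pretty}. Your write-up merely supplies the details the paper omits (the reduction to the four product types and the sign computation showing the $(sq)\cdot(sq')$ case collapses to a suspension of $f(q)q'-qf(q')$), and these check out.
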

\begin{proof}
The only non trivially verified condition for $C(f)$ being a CDGA is the Leibniz rule for the differential,  which is a consequence of $(\ref{Eq:anodyne})$.    See the proof of \cite[Proposition 2.2]{CLS:pretty} for more details.
\end{proof}

 \begin{prop}\label{p:truncMC}
Let $A$ be a connected CDGA and let $f\colon Q\to A$ be an $A$-dgmodule morphism.   Let $p$ and $N$ be natural integers such that $Q^{<p}=0$ and $N\leq 2p-3$.    Then the semi-trivial CGA structure on the mapping cone $C(f)$ induces a CDGA structure on $\tau^{\leq N} (C(f))$, and 
\[A\to \tau^{\leq N} (C(f))\]
is a CDGA morphism. 
\end{prop}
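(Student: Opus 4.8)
The plan is to realize $\tau^{\leq N}(C(f))$ as a quotient $C(f)/I$ by a differential ideal $I$ that is simultaneously an $A$-dgsubmodule, and then to observe that the dimension hypotheses push the unique obstruction to the semi-trivial CGA structure being a CDGA entirely into the truncated part $I$, where it disappears.

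First I would fix the ideal. Write $C = C(f) = A\oplus_f sQ$ with its cone differential $\delta$ and set $I \defeq I^N\oplus C^{>N}$, where $I^N$ is any vector-space complement of $\ker\delta\cap C^N$ inside $C^N$. Since multiplying a homogeneous element of positive degree by anything raises degree and $A$ is connected, one checks at once that $C^{>N}$ is an ideal, that $C\cdot I^N\subseteq I^N\oplus C^{>N}=I$, and that $\delta(I)\subseteq C^{>N}\subseteq I$ for purely degree reasons. Hence $I$ is a differential ideal and an $A$-dgsubmodule, the projection $\pi\colon C\to C/I$ is a truncation of $C$ as an $A$-dgmodule below degree $N$ in the sense of the earlier definition (the homology isomorphisms below degree $N$ follow from $I^{<N}=0$ and $I^N\cap\ker\delta=0$), and $C/I$ inherits a quotient CGA structure from the semi-trivial one on $C$.

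The heart of the argument is the Leibniz rule on $C/I$. In the semi-trivial CGA structure the Leibniz rule already holds for every product except possibly $(sq)\cdot(sq')$: on $A$ it is the Leibniz rule of the CDGA $A$, and on the mixed terms it follows from $f$ being an $A$-dgmodule chain map. For two suspended elements one computes, exactly as in the proof of Proposition \ref{p:nagata}, that
\[\delta\big((sq)(sq')\big) - \Big(\delta(sq)\cdot(sq') + (-1)^{|sq|}(sq)\cdot\delta(sq')\Big) = \pm\, s\big(f(q)\,q' - (-1)^{|q|}\,q\,f(q')\big),\]
which up to sign is exactly the balancedness defect of $f$ from Definition \ref{D:balanced} and lies in $sQ$. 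The decisive point is the degree: since $Q^{<p}=0$ we have $|sq|,|sq'|\geq p-1$, so this element sits in degree $\geq 2p-1$. Because $N\leq 2p-3$ we have $2p-1>N$, so the defect lies in $C^{>N}\subseteq I$ and therefore vanishes in $C/I$. Thus the Leibniz rule holds in the quotient and $\tau^{\leq N}(C(f))=C/I$ is a genuine CDGA. Finally, since $A\hookrightarrow C$ is a CDGA morphism (here $\delta$ restricts to $d_A$ on $A$) and $\pi$ is a surjective CGA morphism commuting with differentials, the composite $A\to\tau^{\leq N}(C(f))$ is a CDGA morphism.

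I expect the only genuinely delicate step to be the degree bookkeeping in the displayed identity. The hypotheses $Q^{<p}=0$ and $N\leq 2p-3$ are precisely calibrated so that the failure of balancedness—which is exactly what prevents the full cone $C(f)$ from being a CDGA through Proposition \ref{p:nagata}—is confined to degrees strictly above $N$, and hence is annihilated after truncation. Everything else (that $I$ is an ideal, that $\delta$ descends, that $A\to C/I$ is a CDGA morphism) reduces to elementary degree estimates once the ideal has been chosen as above.
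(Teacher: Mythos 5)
Your proof is correct and takes essentially the same route as the paper, whose proof simply defers to Proposition \ref{p:nagata} and to \cite[lemme 4.5]{LaSt:PE}: the only nontrivial Leibniz case is the product $(sq)\cdot(sq')$, whose defect is the balancedness defect of $f$ sitting in degree $\geq 2p-1 > N$, hence killed by the truncation. You have merely spelled out the details (the explicit truncation ideal and the degree count) that the paper leaves to the cited references.
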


\begin{proof}
Analogous argument as for the proof of Proposition 2.5.  See the proof of  \cite[lemme 4.5]{LaSt:PE} for more details.  
\end{proof}

\begin{rem}
 In the rest of this paper, when a mapping cone is equipped with a CDGA structure it will be understood that it comes from the semi-trivial structure.
\end{rem}

\subsection{Homotopy kernel}
\label{S:hoker}
In this section we recall the notion of homotopy kernel and  some of its properties. 

\begin{de}
Let $f \colon M \to N$ be a morphism of $A$-dgmodules. The \emph{homotopy kernel} of $f$ is the $A$-dgmodule mapping cone
\[\mathrm{hoker} \  f \defeq s^{-1}N\oplus_{s^{-1}f} M,\]
which comes with an obvious map 
\[\mathrm{hoker} \  f \to M \ ; \  (s^{-1}n, m) \mapsto m.\]
\end{de}

The following result is a consequence of the five lemma and  justifies the terminology ``homotopy kernel''. 
\begin{prop}
 Let $f\colon M \to N$ be a surjective morphism of $A$-dgmodules.  Then the morphism
  \[
  \begin{array}{cccc}
  \varphi : &\ker f &\qi &  \mathrm{hoker} \  f \\
& m &\longmapsto & (0, m)\end{array}\]
is an $A$-dgmodule quasi-isomorphim. 
\end{prop}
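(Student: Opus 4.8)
The plan is to realize $\varphi$ as a comparison morphism between two long exact cohomology sequences that agree term by term except in one spot, and then to conclude by the five lemma, exactly as the statement suggests.

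First I would check that $\varphi$ is a morphism of $A$-dgmodules. Since $f$ is a dgmodule map, $\ker f$ is a sub-$A$-dgmodule of $M$; moreover, for $m\in\ker f$ one has $f(m)=0$, so the cone differential of $\hoker f$ satisfies $\delta(0,m)=(s^{-1}f(m),\pm d_M m)=(0,\pm d_M m)$, which is exactly $\varphi(\pm d_M m)$. Hence $\varphi$ commutes with the differentials, and it is $A$-linear by construction.

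Next I would introduce two short exact sequences of $A$-dgmodules. The surjectivity of $f$ gives
\[ 0\to \ker f \xrightarrow{\iota} M \xrightarrow{f} N \to 0, \]
and the tautological mapping-cone sequence for $\hoker f=s^{-1}N\oplus_{s^{-1}f}M$ reads
\[ 0\to s^{-1}N \xrightarrow{j} \hoker f \xrightarrow{\pi} M \to 0, \qquad j(s^{-1}n)=(s^{-1}n,0),\quad \pi(s^{-1}n,m)=m. \]
The key computation is that the connecting homomorphism of the second sequence is, up to sign, $H(f)$: lifting a cocycle $m\in M$ to $(0,m)$ and applying $\delta$ yields $(s^{-1}f(m),0)=j(s^{-1}f(m))$, so under the identification $H^i(s^{-1}N)\cong H^{i-1}(N)$ the connecting map $H^i(M)\to H^{i+1}(s^{-1}N)\cong H^i(N)$ equals $\pm H^i(f)$. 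Thus both long exact sequences read, term by term,
\[ \cdots \to H^{i-1}(N)\to \, \ast \, \to H^i(M)\xrightarrow{\pm H^i(f)} H^i(N)\to\cdots, \]
with $\ast=H^i(\ker f)$ in the first and $\ast=H^i(\hoker f)$ in the second.

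Finally I would assemble the comparison, using the identity on every $H(M)$- and $H(N)$-term and $H(\varphi)$ in the middle column. The square adjacent to $H(M)$ commutes because $\pi\circ\varphi=\iota$, so $H(\pi)\circ H(\varphi)=H(\iota)$; the square containing $H(f)$ commutes up to the sign noted above; and the square involving the two connecting maps commutes after the same bookkeeping, since for a cocycle $n\in N$ lifted to $m\in M$ with $f(m)=n$ the element $(s^{-1}n,d_M m)=\delta(0,m)$ is a coboundary, forcing $H(\varphi)$ of $[\,d_M m\,]$ to agree up to sign with $H(j)[s^{-1}n]$. The five lemma then shows that $H^i(\varphi)$ is an isomorphism for every $i$, i.e. $\varphi$ is a quasi-isomorphism. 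The only genuine obstacle is the consistent treatment of the suspension and cone signs in these three squares; all of them can be absorbed by inserting suitable signs $\pm\id$ in the vertical maps over the $N$-terms, which does not affect the applicability of the five lemma. Alternatively, one may observe directly that $\varphi(\ker f)=0\oplus\ker f$ is a sub-dgmodule of $\hoker f$ whose quotient is $s^{-1}N\oplus_{s^{-1}\bar f}(M/\ker f)$, the mapping cone of the isomorphism $\bar f\colon M/\ker f\xrightarrow{\cong} N$; this cone is acyclic, so the long exact sequence of $0\to\ker f\to\hoker f\to(\text{acyclic})\to 0$ yields the claim at once.
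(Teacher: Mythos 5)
Your proof is correct and follows exactly the route the paper indicates: the paper gives no written proof beyond the remark that the proposition ``is a consequence of the five lemma,'' and your comparison of the long exact sequence of $0\to\ker f\to M\to N\to 0$ with that of the cone sequence $0\to s^{-1}N\to \mathrm{hoker}\, f\to M\to 0$ (identifying the connecting map with $\pm H(f)$) is precisely that five-lemma argument, with the sign bookkeeping handled appropriately. Your closing alternative---quotienting by $\varphi(\ker f)$ to get the acyclic cone of the isomorphism $\bar f\colon M/\ker f\to N$---is a nice, even shorter variant, but the main argument matches the paper's intent.
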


\section{Lefschetz duality for manifolds with boundary}
\label{s:lef_dual}

\bigskip

The aim of this section is to prove Proposition \ref{modele_apl_bord}
below, which is a first step towards the description of the rational homotopy type of the complement of a subpolyhedron in a manifold with boundary.

Let $W$ be a closed connected oriented triangulated manifold of dimension $n$ with boundary and let  $f: K\hookrightarrow W$ be the inclusion of a connected  subpolyhedron of dimension $k$ in $W$. Denote by $\delw$ the boundary of  $W$ and set  
\begin{equation}
\label{E:dWK}
\partial_W K := K\cap \partial W.
\end{equation}
In this section we will  construct a dgmodule model of $W\setminus K$, extending \cite[Theorem 6.3]{LaSt:remFMk} to manifolds with boundary. 
Consider the diagram
\begin{equation}
\label{diag1}
\xymatrix{
W &\ar@{_{(}->}_{f}[l] K   \\
\delw  \ar@{_{(}->}[u]   & \ar@{_{(}->}^{\partial f}[l] \delk,\ar@{_{(}->}[u], }
\end{equation}
which after applying the $\apl$ functor gives
\begin{equation}
\label{diag2}
\xymatrix{
\apl(W) \ar[r]^-{\apl(f)}  \ar[d]  & \apl(K)\ar[d]  \\
\apl(\delw)\ar[r] &\apl(\delk). }
\end{equation}
Recall that for a map of spaces $Y\to X$, we set 
\[\apl(X,Y)= \ker (\apl(X) \to \apl(Y)).\]
The inclusion of pairs
\[i\colon(K,\delk)\hookrightarrow (W,\delW)\]
induces an $\apl(W)-$dgmodule morphism
\begin{equation}
\label{E:apli}
\apl(i) \colon \apl(W, \delw) \to \apl(K,\delk).
\end{equation}
Using our notation for mapping cones, suspension and linear duals
from Section \ref{s:semi_trivial}, consider the map
\[
s^{-n}\# \apl(i) \colon \sn \apl(K, \delk) \to \sn\apl(W,\delw)
\]
and its mapping cone
\begin{equation}
\label{E:MCs-nApli}
C\left(s^{-n}\#
  \apl(i)\right)\,\,=\,\,\sn\apl(W,\delw)\,\oplus_{s^{-n}\# \apl(i)}\,s
\sn \apl(K, \delk) 
\end{equation}
with the inclusion
\begin{equation}
\label{E:iotaCsapli}
\iota\colon 
\sn\apl(W,\delw)
\hookrightarrow
C\left(s^{-n}\#
  \apl(i)\right).
\end{equation}
Since $(W,\delw)$ is an oriented compact manifold of dimension $n$, Poincar\'e
duality induces
a quasi-isomorphism of $\apl(W)-$dgmodules
\begin{equation}
\label{E:PhiW1}
\Phi_W\colon\apl(W)\qi s^{-n}\#\apl(W,\delw)
\end{equation}
(see $(\ref{E:PhiW})$ in the proof of Proposition
\ref{modele_apl_bord} below for an explicit
description of $\Phi_W$.)

\begin{prop}
\label{modele_apl_bord}
The map 
\[\apl(W)\to\apl(\wsk)\]
is weakly equivalent in the category of $\apl(W)$-dgmodules to the map
\[
\iota\circ\Phi_W\colon\apl(W)\to C(s^{-n}\# \apl(i))\]
where $C(s^{-n}\# \apl(i))$ is the mapping cone \eqref{E:MCs-nApli},
$\iota$ is from \eqref{E:iotaCsapli}, and $\Phi_W$ is from \eqref{E:PhiW1}.
\end{prop}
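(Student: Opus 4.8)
## Proof Proposal

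The plan is to identify $\apl(\wsk)$, up to weak equivalence of $\apl(W)$-dgmodules, with the mapping cone $C(s^{-n}\#\apl(i))$ by combining the Lefschetz duality isomorphism with the excision/cofiber description of the complement. The starting point is the observation that for the complement $\wsk$, there is a long exact sequence relating $\apl(W)$, $\apl(\wsk)$, and the relative term measuring the ``hole'' cut out by $K$. By excision (or a tubular neighborhood argument, since $K$ is a subpolyhedron of a triangulated manifold) this relative term is controlled by the normal data of $K$ in $W$. Concretely, I would first produce a dgmodule model in which the map $\apl(W)\to\apl(\wsk)$ appears as an inclusion into a mapping cone whose ``cone part'' involves $\apl(K,\delk)$, suitably suspended and dualized to account for the codimension.

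First I would make the duality map $\Phi_W$ explicit. Since $(W,\delw)$ is a compact oriented $n$-manifold with boundary, Lefschetz duality gives a quasi-isomorphism $\apl(W)\qi s^{-n}\#\apl(W,\delw)$ of $\apl(W)$-dgmodules; one realizes this at the cochain level by capping with a representative of the fundamental class $[W,\delw]$, and one checks $\apl(W)$-linearity and the quasi-isomorphism property degreewise via the classical Lefschetz duality on cohomology. The same duality applied to the pair $(K,\delk)$ (after passing to a regular neighborhood, which is itself a manifold with boundary whose complement is homotopy equivalent to $\wsk$) identifies the relative term in the complement's long exact sequence with $s^{-n}\#\apl(K,\delk)$, and the connecting map with $s^{-n}\#\apl(i)$, where $\apl(i)\colon\apl(W,\delw)\to\apl(K,\delk)$ is the restriction of \eqref{E:apli}. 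This is where the naturality of duality with respect to the inclusion of pairs $i\colon(K,\delk)\hookrightarrow(W,\delw)$ is essential: the square comparing the duality maps for $W$ and for $K$ must commute (up to the appropriate homotopy) so that dualizing $\apl(i)$ indeed yields the comparison map between the two duality models.

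The second step is to assemble these pieces into the mapping cone. Using the homotopy kernel machinery of Section \ref{S:hoker} and the mapping cone conventions of Section \ref{s:semi_trivial}, I would rewrite the cofiber sequence for $\wsk$ so that $\apl(\wsk)$ is weakly equivalent to the cone of $s^{-n}\#\apl(i)$, with the canonical map from $\apl(W)$ factoring through $\Phi_W$ followed by the structural inclusion $\iota$ of \eqref{E:iotaCsapli}. The Proposition \ref{p:nagata}/homotopy-kernel formalism guarantees that the relevant quasi-isomorphisms are $\apl(W)$-linear, so that the identification respects the dgmodule structure and not merely the underlying complexes. The final diagram chase, verifying that the composite $\iota\circ\Phi_W$ models $\apl(W)\to\apl(\wsk)$ rather than some other map, is handled by comparing connecting homomorphisms in the two long exact sequences via the five lemma.

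The main obstacle I expect is the compatibility of Lefschetz duality with the inclusion of the pair $(K,\delk)$ into $(W,\delw)$ at the \emph{cochain} (not merely cohomology) level, in an $\apl(W)$-linear and homotopy-coherent way. On cohomology this compatibility is standard, but promoting it to a genuine quasi-isomorphism of $\apl(W)$-dgmodules fitting into a commuting square requires a careful choice of fundamental-class representatives and of the comparison homotopies; this is precisely the content that upgrades \cite[Theorem 6.3]{LaSt:remFMk} from the closed case to manifolds with boundary. A secondary technical point is replacing $K$ by a regular neighborhood and checking that its complement has the homotopy type of $\wsk$, together with controlling the boundary contributions $\delk$ and $\delw$ coherently throughout.
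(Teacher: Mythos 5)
Your plan reproduces the architecture of the paper's own proof --- regular neighborhood $T$ of $K$, excision, cochain-level duality $\alpha\mapsto\epsilon_W(\alpha\,\cdot\,)$, and a comparison of mapping cones of short exact sequences --- but the point you defer as ``the main obstacle'' is exactly where the paper's one essential idea lives, and you do not supply it. The paper does \emph{not} proceed by ``careful choice of fundamental-class representatives and of the comparison homotopies.'' Instead it replaces $\apl$ by the naturally quasi-isomorphic functor $\wh$ on \emph{ordered simplicial complexes} (from \cite{LaSt:remFMk}), whose \emph{strong excision property} makes the excision map $\apl(W,\wst\cup\delw)\to\apl(T,\delt)$ an honest \emph{isomorphism} of cochain complexes, not merely a quasi-isomorphism. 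This is what allows the orientation of $T$ to be defined as the literal composite $\epsilon_T=\epsilon_W\circ\apl(\mathrm{incl})\circ(\mathrm{exc})^{-1}$, so that $\Phi_W$, $\Phi_T$ and $\tilde\Phi_T$ are all given by the same formula (multiplication followed by an orientation) with orientations compatible \emph{by construction}; the resulting diagram \eqref{D:big} then commutes strictly as a diagram of $\apl(W)$-dgmodules. Strict commutativity is not a luxury here: your final step compares mapping cones of the rows of a square, and mapping cones are not functorial with respect to squares that commute only up to unspecified homotopy --- producing and propagating coherent comparison homotopies through the duality, excision and suspension/dualization steps is essentially as hard as the proposition itself. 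As written, the proposal stalls at precisely the point you flag.

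Two smaller inaccuracies. First, the duality actually needed for the neighborhood is neither a duality of the pair $(K,\delk)$ (meaningless, since $K$ is not a manifold) nor of $(T,\delt)$, but the duality $\tilde\Phi_T\colon\apl(T,\delmt)\to\sn\apl(T,\delpt)$ associated to the decomposition $\delt=\delpt\cup_{\delot}\delmt$, which interchanges the two halves of the boundary; the term $s^{-n}\#\apl(K,\delk)$ only enters afterwards, via the homotopy equivalence $(K,\delk)\simeq(T,\delpt)$. This boundary bookkeeping, which you call a secondary technical point, is structurally central to getting the correct connecting map $s^{-n}\#\apl(i)$. Second, Proposition \ref{p:nagata} (balanced morphisms and semi-trivial CDGA structures on cones) is irrelevant to guaranteeing that the identifications are $\apl(W)$-linear; linearity is automatic because every duality map is defined by multiplication followed by an orientation.
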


\begin{proof}
First we review from \cite[Section 4]{LaSt:remFMk}  a variation of the functor $\apl$ defined on
ordered simplicial complex and having an improved excision property.
Recall from \cite[Chapter 10]{FHT:RHT} that $\apl$ is actually defined
first on simplicial sets. Consider the category,  ${\mathcal K}$, of
ordered simplicial complexes.
To any ordered simplicial complex, $K$, we can associate naturally a
simplicial set, $K_\bullet$, whose non-degenerate simplices 
 are exactly the simplices of $K$ (see \cite[p.108]{Cur:SHT}).
Define the functor  
\[\widehat{\apl} \colon {\mathcal K} \to ADGC ; K \to \apl (K_\bullet).\]
This functor verifies the two following properties (see \cite[Section 4]{LaSt:remFMk}):
\begin{enumerate}
\item $\apl (| K|) \simeq\widehat{\apl} (K)$ naturally for every ordered
  simplicial complex (where $|K|$ is the geometric realization).
\item \underline{\emph{Strong excision property}} : Let $(K,L)$ be a pair of ordered simplicial complexes. Let $K'\subset K$ a sub-complex and $L'= K'\cap L$. If $K'\cup L= K$ then the inclusion $j \colon (K', L') \hookrightarrow (K, L)$ induces an \emph{isomorphism}
\[\widehat{\apl}(j) \colon \widehat{\apl}(K,L)
\stackrel{\cong}\longrightarrow \widehat{\apl}(K', L').\]
(Note that $\apl(j)$ is a quasi-isomorphism by the classical excision property.)
\end{enumerate}

Consider now the triangulated compact manifold $W$ and its
subpolyhedron $K$.
Replace those polyhedra $W$ and $K$ by their second barycentric
subdivision. Denote by $T$ the star of $K$ in $W$,
 which is a regular neighborhood (see \cite[chapters 1 and
 2]{Hud:PLT}), hence $T$ is a codimension 0 submanifold with boundary and it retracts by deformation onto $K$. It is clear that the topological closure  $\overline{W\bs T}$ of ${W\bs T}$ is homotopy equivalent to $\wsk$.  Set \begin{eqnarray*}\delpt&=&\delt\cap \delw  \\ \delmt&=&\overline{ (\delt \cap (W\bs \delw))}=T\cap \overline{W\bs T} \\ \partial_0 T &=& \delpt\cap \delmt,
\end{eqnarray*} which gives a decomposition of the boundary of
$T$, \[\delt = \delpt \cup_{\delot} \delmt.\] 

Our next goal is to set up Diagram \eqref{D:big} below.
Let us fix an arbitrary order on the vertices of the simplicial complex $W$ such that $W$ and the subpolyhedron $T$, $\delt$, $\delpt$, $\delmt$, $\delot$, $K$ and $\delk$ turn into ordered simplicial complexes. We can apply to them the functor $\wh$ which is naturally quasi-isomorphic to $\apl$.
 To prove the result, it suffices to show that the mapping cone
 $C(\sn$ $\wh(i))$ is a model of $\wh(W)$-dgmodule of $\wh(\wst)$.  To
 ease notations, in the rest  of this proof we will write $\apl$ instead of $\wh$.

The inclusion of the pair 
\[(T,\delt) \hookrightarrow (W, \overline{W\backslash T} \cup \delw)\]
induces by the strong excision property above an isomorphism
\[\apl(W,\overline{W\backslash T}\cup \delw) \stackrel{\cong}\rightarrow \apl(T,\delt).\]
Denote by $n$ the dimension of $W$. By Poincaré duality of the pair $(W,\delw)$, there exists an orientation \[\epsilon_W : \apl(W,\delw) \to s^{-n}\mathbb{Q},\]
i.e. a morphism of cochain complexes that induces an isomorphism in cohomology in degree $n$. Using this morphism we can define a morphism of $\apl(W)$-dgmodules
\begin{equation}\label{E:PhiW}
\begin{array}{cccc}\Phi_W \colon& \apl(W) &\longrightarrow &\sn \apl(W,\delw)\\
& \alpha & \longmapsto & \left(\Phi_W(\alpha)\colon \beta \mapsto
  \epsilon_W (\alpha \beta)\right), \end{array} 
\end{equation}
which is a quasi-isomorphism by Poincaré duality of the pair $(W,\delw)$.
The composition 
\[\epsilon_T: \apl(T,\delt)\cong\apl(W, \overline{W\backslash T} \cup \delw)\stackrel{\apl(incl)}\longrightarrow \apl(W,\delw) \stackrel{\epsilon_W}{\longrightarrow} s^{n}\mathbb{Q}\]
induces an isomorphism in cohomology in degree $n$. Define
\[\begin{array}{cccc}\Phi_T \colon& \apl(T) &\longrightarrow &\sn \apl(T,\delt)\\
& \alpha & \longmapsto & \left(\Phi_T(\alpha)\colon \beta \mapsto \epsilon_T (\alpha \beta)\right) \end{array} \]
which is a quasi-isomorphism of $\apl(W)$-dgmodules by Poincaré duality of the pair $(T,\delt)$.
Also, using the quasi-isomorphism above and the five lemma, it is not difficult to see that the morphism
\[\begin{array}{cccc}\tilde\Phi_T \colon& \apl(T,\delmt) &\longrightarrow &\sn \apl(T,\delpt)\\
& \alpha & \longmapsto & \left(\tilde\Phi_T(\alpha)\colon \beta \mapsto \epsilon_T (\alpha \beta)\right) \end{array} \]
is \  a  \  quasi-isomorphism  \  of  \  $\apl(T)$-dgmodules,  \  hence \   of \   $\apl(W)$-dgmodules.

The inclusion
\[(K,\delk) \hookrightarrow (T,\delpt)\]
is a homotopy equivalence and induces a weak equivalence of $\apl(W)$-dgmodules
\[\apl(T,\delpt) \qi \apl(K,\delk).\]
By the strong excision property, the inclusion  
\[(T,\delmt) \hookrightarrow (W, \wst)\]
induces an isomorphism \[\apl(W, \wst) \stackrel{\cong}\rightarrow \apl(T,\delmt).\] 
Combining all these morphisms we get the following commutative diagram of $\apl(W)$-dgmodules
\begin{equation}
\label{D:big}
{\xymatrix{ 
0\ar[r]& 0 \ar[r]\ar[d]^{0}\ar@{}[rd]|{(\ast)} &\apl(W) \ar@{=}[r]\ar@{=}[d] &\apl(W)\ar[r]\ar[d]_{\apl(j)}& 0 \\
0\ar[r]& \apl(W,\wst) \ar[r]\ar[d]^{\cong \text{ exc}} &\apl(W) \ar[r]_{\apl(j)}\ar[dd]_{\simeq}^{\Phi_W} &\apl(\wst)\ar[r]& 0 \\
 &\apl(T,\delmt) \ar[d]^{\simeq \tilde\Phi_T}& \\
&s^{-n}\#\apl(T,\delpt)\ar[r]&s^{-n}\#\apl(W,\delw)\ar@{=}[d]\\
&s^{-n}\#\apl(K,\delk) \ar[u]_{\simeq} \ar[r]_{s^{-n}\#\apl(i)}&
s^{-n}\#\apl(W,\delw)
}}
\end{equation}
and the two top lines are short exact sequences.

Properties of mapping cones and of short exact sequences imply that,
 in the category of $\apl(W)$-dgmodules,
the morphism
\begin{equation}
\label{E:aplj}
\apl(j)\colon\apl(W)\to\apl(\wst)
\end{equation}
on the top right of \eqref{D:big} is equivalent to the map induced between the mapping cones of the
horizontal maps of the 
square $(\ast)$ in Diagram~$(\ref{D:big})$,
\begin{equation}
\label{E:aplMC}
\id_{\apl(W)}\oplus s0\colon \apl(W)\oplus s0\to\apl(W)\oplus s\apl(W,\wst).
\end{equation}
Since the vertical maps below the second line of \eqref{D:big}
are quasi-isomorphisms, the morphism 
$\id_{\apl(W)}$ $\oplus s0$ in \eqref{E:aplMC}  is equivalent to
\[\iota\circ\Phi_W\colon\apl(W)\to C(s^{-n}\# \apl(i)).\]
The morphism $\apl(j)$ of \eqref{E:aplj} is clearly equivalent to
\[
\apl(W)\to\apl(\wsk).
\]
This finishes the proof.
 \end{proof}

  \bigskip

\section{Rational model of the complement of a subpolyhedron in a manifold with boundary}
\label{s:complement}

\bigskip

In this section we establish the CDGA model of the complement
$W\setminus K$ under some unknotting
condition, in particular when the codimension  of the subpolyhedron is
high  (Theorem \ref{T:mod_conndim}). We also state a partial CDGA model without unknotting condition
(Proposition \ref{modele_truncation}.)


Consider the same setting as at the beginning of Section \ref{s:lef_dual}, in particular Diagram \eqref{diag1}. Suppose given a commutative diagram of CDGAs  
\begin{equation}
\label{diag3}
\xymatrix{
A \ar[d]_{\alpha} \ar[r]^{\varphi} & B \ar[d]^{\beta} \\
\dela  \ar[r]_{\partial \varphi} &\delb}
\end{equation}
that is a CDGA model of  
\begin{equation}
\label{diag4}
\xymatrix{
W &\ar@{_{(}->}_{f}[l] K   \\
\delw  \ar@{_{(}->}[u]   & \ar@{_{(}->}^{\partial f}[l] \delk,\ar@{_{(}->}[u], }
\end{equation}
in other words Diagram \eqref{diag3} is quasi-isomorphic to Diagram ($\ref{diag2}$). 
Note that in Diagram (\ref{diag3}), $\partial A$ and $\partial B$ are just the names of some CDGAs.

The goal of this section is to construct from Diagram \eqref{diag3} a
CDGA model of $\apl(\wsk)$.

\subsection{Dgmodule model of the complement $\wsk$}

Let $\hat{A}$ be a CDGA such that we have the following zig-zag of quasi-isomorphisms 
\begin{equation}
\label{rho_rho'}
\xymatrix{ A&\ar[l]_{\rho}^{\simeq} \hat{A} \ar[r]^-{\rho'}_-{\simeq}& \apl(W). }
\end{equation}
The morphism $\rho'$ induces a structure of $\hat{A}$-dgmodule on
Diagram (\ref{diag2}) and the morphism $\rho$ induces a structure of
$\hat{A}$-dgmodule on Diagram (\ref{diag3}). From Diagram
(\ref{diag3}) we deduce an $\hat{A}$-dgmodules morphism between the
homotopy kernels of $\alpha$ and $\beta$
(see Section \ref{S:hoker})
\begin{equation}\label{barfi}\bar{\varphi}: \text{hoker }\alpha \to \text{hoker } \beta.
\end{equation}
Note also that by Poincaré duality of the pair $(W,\partial W)$, we have a
quasi-isomorphism of $A$-dgmodules
\[\theta_A\colon A\quism s^{-n}\#\hoker\alpha.\]

\begin{prop}
\label{dual}
An $\hat A$-dgmodule model of
\[\apl(W)\to\apl(\wsk)\]
is given by the composite
\[
\xymatrix{
A\ar[r]^-{\simeq}_-{\theta_A}&s^{-n}\#\hoker\alpha\ar@{^(->}[r]_-{\iota}&C(s^{-n}\#\bar{\varphi}) }
\]
where $C(s^{-n}\#\bar{\varphi}) $ is the mapping cone of the
$\hat{A}$-dgmodules morphism
\[s^{-n}\#\bar{\varphi}\colon s^{-n}\#\hoker \beta \to s^{-n}\# \hoker \alpha.\]
\end{prop}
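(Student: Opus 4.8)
The plan is to transport the $\apl(W)$-dgmodule model of Proposition \ref{modele_apl_bord} along the quasi-isomorphisms relating Diagram \eqref{diag3} to Diagram \eqref{diag2}, exploiting that the homotopy kernel and the mapping cone are homotopy invariant and that suspension-dualization over $\bq$ is exact. Throughout, the zig-zag \eqref{rho_rho'} lets me regard every dgmodule in play — those built from $A,B,\dela,\delb$ as well as those built from $\apl(W),\apl(K),\apl(\delw),\apl(\delk)$ — as $\ahat$-dgmodules, so that ``weakly equivalent'' means weakly equivalent as $\ahat$-dgmodules.

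First I would identify the homotopy kernels of the model with relative cochains. Since the restriction maps $\apl(W)\to\apl(\delw)$ and $\apl(K)\to\apl(\delk)$ are surjective, the final Proposition of Section \ref{S:hoker} yields natural quasi-isomorphisms $\hoker(\apl(W)\to\apl(\delw))\simeq\apl(W,\delw)$ and $\hoker(\apl(K)\to\apl(\delk))\simeq\apl(K,\delk)$. On the other hand, Diagram \eqref{diag3} is connected to Diagram \eqref{diag2} by a zig-zag of quasi-isomorphisms of commutative squares of $\ahat$-dgmodules, and the mapping cone — hence the homotopy kernel — sends a quasi-isomorphism of arrows to a quasi-isomorphism of cones. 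Combining these, I obtain a square of $\ahat$-dgmodule quasi-isomorphisms, commuting up to homotopy,
\[
\xymatrix{
\hoker\alpha \ar[r]^{\bar\varphi}\ar[d]_-{\simeq} & \hoker\beta \ar[d]^-{\simeq} \\
\apl(W,\delw)\ar[r]_-{\apl(i)} & \apl(K,\delk),
}
\]
which shows that $\bar\varphi$ is a model of $\apl(i)$. Applying the exact functor $s^{-n}\#(-)$ reverses the horizontal maps and shows $s^{-n}\#\bar\varphi$ is a model of $s^{-n}\#\apl(i)$; invoking homotopy invariance of the mapping cone once more, the induced map $C(s^{-n}\#\bar\varphi)\to C(s^{-n}\#\apl(i))$ is a quasi-isomorphism, compatible with the two base inclusions $\iota$.

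It then remains to match the two Poincar\'e duality maps. I would verify that, under $\rho,\rho'$ and the identification $\hoker\alpha\simeq\apl(W,\delw)$ above, the quasi-isomorphism $\theta_A\colon A\to s^{-n}\#\hoker\alpha$ corresponds to $\Phi_W\colon\apl(W)\to s^{-n}\#\apl(W,\delw)$ of \eqref{E:PhiW1}: both are the duality quasi-isomorphisms induced by the orientation of the pair $(W,\delw)$, so the task is to see that the orientation used to build $\theta_A$ is compatible with $\epsilon_W$, making the square relating $\theta_A$ and $\Phi_W$ commute up to homotopy of $\ahat$-dgmodules. Granting this, the composite $\iota\circ\theta_A$ is weakly equivalent to $\iota\circ\Phi_W$, which by Proposition \ref{modele_apl_bord} is a model of $\apl(W)\to\apl(\wsk)$, and the claim follows. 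Everything but this last compatibility is formal homotopy invariance and exactness of dualization, so the main obstacle is precisely the coherence of the two Poincar\'e duality isomorphisms $\theta_A$ and $\Phi_W$.
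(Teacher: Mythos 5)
Your proposal is correct and follows essentially the same route as the paper: the paper's (very terse) proof just observes that $\hoker\alpha$ and $\hoker\beta$ are weakly equivalent as $\ahat$-dgmodules to $\apl(W,\delw)$ and $\apl(K,\delk)$ and declares the result a direct consequence of Proposition \ref{modele_apl_bord}, which is precisely the chain of identifications (hoker of a surjection $\simeq$ kernel, homotopy invariance of cones, exactness of $s^{-n}\#$) that you spell out. The one point you flag as the "main obstacle" --- compatibility of $\theta_A$ with $\Phi_W$ --- is real but harmless, and worth noting how it closes: homotopy classes of $A$-dgmodule maps $A\to s^{-n}\#\hoker\alpha$ are given by $H^0(s^{-n}\#\hoker\alpha)\cong \#H^n(W,\delw)\cong\bq$, so any two such quasi-isomorphisms agree up to a nonzero scalar and homotopy, and a scalar is absorbed by the evident automorphism of the mapping cone $C(s^{-n}\#\bar\varphi)$; hence $\iota\circ\theta_A$ is a model independently of the choice of orientation used to build $\theta_A$.
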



\begin{proof}
Since \eqref{diag3} is a CDGA model of \eqref{diag2}, $\hoker \alpha$ is weakly equivalent as an $\ahat$-dgmodule to $\apl(W,\delw)$ and $\hoker \beta$ is weakly equivalent as an $\ahat$-dgmodule to $\apl(K,\delk)$. Hence, the result is a direct consequence of Proposition \ref{modele_apl_bord}.
\end{proof}

\begin{rem}
If the morphisms $\alpha$ and $\beta$ are surjective then we can work with the genuine kernel instead of the homotopy kernel.
\end{rem}

The major flaw of the dgmodule model of $W\setminus K$ of
Proposition \ref{dual} is that there is no natural CDGA structure on
it. The next proposition is a first step to endow this dgmodule 
 model of $W\setminus K$ with the  structure of a CDGA.
%
\begin{prop}
\label{modele_de_dgmod}
Assume given an $\ahat$-dgmodule morphism $\varphi^! \colon Q \to A$ weakly equivalent to  
\[\sn\bar\varphi \colon \sn\hoker \beta \to \sn \hoker \alpha.\] Then 
an $\hat A$-dgmodule of $\apl(W)\to\apl(\wsk)$ is given by
\[A\,\hookrightarrow\,C(\varphi^!)\]
where $C(\varphi^!)$ is the mapping cone $A\oplus_{\varphi^!} sQ$.
\end{prop}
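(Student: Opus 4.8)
The plan is to deduce the statement directly from Proposition \ref{dual} by invoking the homotopy invariance of the mapping cone construction. Recall that Proposition \ref{dual} exhibits as a model of $\apl(W)\to\apl(\wsk)$ the composite
\[\xymatrix{A\ar[r]^-{\theta_A}_-{\simeq}&\sn\hoker\alpha\ar@{^(->}[r]^-{\iota}&C(\sn\bar{\varphi}),}\]
so it suffices to show that the inclusion $A\hookrightarrow C(\varphi^!)$ is weakly equivalent, as a morphism of $\hat{A}$-dgmodules, to $\iota\circ\theta_A$. First I would observe that, since $\varphi^!\colon Q\to A$ is a morphism of $\hat{A}$-dgmodules, the cone $C(\varphi^!)=A\oplus_{\varphi^!}sQ$ is again an $\hat{A}$-dgmodule and the canonical inclusion $A\hookrightarrow C(\varphi^!)$ is an $\hat{A}$-dgmodule morphism, so the assertion makes sense.

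The key step is the functoriality of the mapping cone on the category of arrows. A commutative square of $\hat{A}$-dgmodules
\[\xymatrix{Q\ar[r]^-{\varphi^!}\ar[d]_-{\simeq}&A\ar[d]^-{\simeq}\\ \sn\hoker\beta\ar[r]^-{\sn\bar{\varphi}}&\sn\hoker\alpha}\]
with quasi-isomorphisms as vertical maps induces a morphism of cones $C(\varphi^!)\to C(\sn\bar{\varphi})$ which, by the five lemma applied to the long exact cohomology sequences of the two cones, is itself a quasi-isomorphism; moreover it fits the natural target-inclusions into a commutative square, carrying $A\hookrightarrow C(\varphi^!)$ to $\iota\colon\sn\hoker\alpha\hookrightarrow C(\sn\bar{\varphi})$ through the right-hand vertical quasi-isomorphism. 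The same conclusion holds when $\varphi^!$ and $\sn\bar{\varphi}$ are merely joined by a zig-zag of such squares, which is exactly what the hypothesis of weak equivalence provides. Hence $A\hookrightarrow C(\varphi^!)$ is weakly equivalent, as an arrow of $\hat{A}$-dgmodules, to $\iota$.

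It remains to reconcile $\iota$ with the composite $\iota\circ\theta_A$ appearing in Proposition \ref{dual}. This is achieved by the commutative square
\[\xymatrix{A\ar[r]^-{\iota\circ\theta_A}\ar[d]_-{\theta_A}^-{\simeq}&C(\sn\bar{\varphi})\ar@{=}[d]\\ \sn\hoker\alpha\ar[r]^-{\iota}&C(\sn\bar{\varphi}),}\]
which shows that precomposing $\iota$ with the quasi-isomorphism $\theta_A$ leaves its weak-equivalence class as an arrow unchanged. Chaining these weak equivalences, $A\hookrightarrow C(\varphi^!)$ is weakly equivalent to $\iota\circ\theta_A$, and therefore models $\apl(W)\to\apl(\wsk)$ by Proposition \ref{dual}. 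The cone-invariance itself is the routine five-lemma argument; the only genuinely delicate point is the bookkeeping on the target side, namely keeping track of the extra quasi-isomorphism $\theta_A$ relating the two targets $A$ and $\sn\hoker\alpha$, so that the conclusion reproduces the composite $\iota\circ\theta_A$ rather than merely $\iota$.
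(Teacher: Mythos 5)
Your proposal is correct and takes essentially the same approach as the paper: the paper's entire proof is the single sentence that the statement is a direct consequence of Proposition \ref{dual}, and your argument simply makes explicit the details behind that claim (functoriality of the mapping cone with the five lemma, plus the bookkeeping square identifying $A\hookrightarrow C(\varphi^!)$ with $\iota\circ\theta_A$). Nothing in your write-up deviates from, or adds a gap to, the intended deduction.
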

\begin{proof}
This is a direct consequence of Proposition \ref{dual}.
\end{proof}

\begin{rem} The existence of such a morphism $\varphi^!$ is guaranteed if we take for $Q$ a cofibrant $\hat{A}$-dgmodule model of $s^{-n}\#$ hoker $\beta$.
\end{rem}
%

This new dgmodule model $C(\varphi^!) =$ $A\oplus_{\varphi^!} sQ$ of $
\wsk$ has the advantage that $A$ is a CDGA and therefore,  under some
dimension hypotheses, the semi-trivial CGA structure on the mapping
cone described in Section \ref{s:semi_trivial} makes it into a CDGA.   We develop this in the next section.

%


\subsection{CDGA model of the complement $\wsk$}


We work in the set-up of diagrams
\eqref{diag3}-\eqref{diag4}. Remember also the notion of semi-trivial
CDGA structure on a mapping cone from Section \ref{s:semi_trivial} and
the notion of  CDGA truncation from Section \ref{s:truncation_cdga}. 
Under some codimension and connectedness hypothesis for the inclusion $f\colon K \hookrightarrow W$ we can construct a CDGA  model of $\wsk$.    More   precisely,  we have:

\begin{theo}
\label{T:mod_conndim}
Let $W$ be a compact connected oriented triangulated manifold of
dimension $n$ with boundary, and let $K\subset W$ be
a subpolyhedron of dimension $k$.
Consider Diagram \eqref{diag4} and its CDGA model \eqref{diag3}.
Let $r$ be an integer such that the induced morphisms on homology
$H_*(f;\bq)$ and $H_*(\partial f; \bq)$ are $r$-connected, that is
$H_{\leq r}(W,K;\bq)=0$ and $H_{\leq r}(\partial W,\partial_W K;\bq)=0$. 

Suppose given an $A$-dgmodule $Q$ weakly equivalent to $\sn
\mathrm{hoker} \  \beta$ such that $Q^{<n-k}=0$  and an $A$-dgmodules morphism
\begin{equation}
\label{e:shriek_map}
\varphi^! \colon Q \to A
\end{equation} weakly equivalent to  \[\sn\bar{\varphi} \colon \sn
\mathrm{hoker} \  \beta \to \sn  \mathrm{hoker} \  \alpha.\]

 If
\begin{equation} 
\label{e:connectivity}
r\geq 2k-n+2,
\end{equation}
then every truncation $\tau^{\leq n-r-1} (C(\varphi^!))$ of the
mapping cone $C(\varphi^!)=A\oplus_{\varphi^!} sQ$ equipped with the semi trivial structure is a CDGA, and the morphism 
\[\xymatrix{A\ar[r] & \tau^{\leq n-r-1}(C(\varphi^!))}\]
is a CDGA model of the inclusion
\[\wsk \hookrightarrow W.\]

Moreover it is always possible to construct an $A$-dgmodule $Q$ and a morphism $\varphi^!$ as in \eqref{e:shriek_map}.
\end{theo}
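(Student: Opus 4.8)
The plan is to build the CDGA model by truncating the dgmodule mapping cone furnished by Proposition~\ref{modele_de_dgmod} and installing the semi-trivial structure on the truncation via Proposition~\ref{p:truncMC}. Recall from Proposition~\ref{modele_de_dgmod} that $A\hookrightarrow C(\varphi^!)$ is already an $\ahat$-dgmodule model of $\apl(W)\to\apl(\wsk)$, so $H^*(C(\varphi^!))\cong H^*(\wsk;\bq)$; and from the zig-zag \eqref{rho_rho'} that $A$ is a CDGA model of $W$. It therefore remains to (i) equip $\tau^{\leq n-r-1}(C(\varphi^!))$ with a CDGA structure, (ii) prove that truncating destroys no cohomology, i.e. $H^{>n-r-1}(C(\varphi^!))=0$, so that the truncation projection is a quasi-isomorphism, and (iii) exhibit the data $Q$ and $\varphi^!$.

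For (i) I would apply Proposition~\ref{p:truncMC} to $\varphi^!\colon Q\to A$ with $p=n-k$ (legitimate since $Q^{<n-k}=0$) and $N=n-r-1$. Its numerical hypothesis $N\leq 2p-3$ reads $n-r-1\leq 2(n-k)-3$, which rearranges precisely to the unknotting inequality $r\geq 2k-n+2$ of \eqref{e:connectivity}; since $W$ is connected we may take $A$ connected, so the proposition applies verbatim. It endows every truncation $\tau^{\leq n-r-1}(C(\varphi^!))$ with the semi-trivial CDGA structure and makes $A\to\tau^{\leq n-r-1}(C(\varphi^!))$ a CDGA morphism.

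Step (ii) is the crux. I would run the long exact sequence of the mapping cone $C(\varphi^!)=A\oplus_{\varphi^!}sQ$. Using $Q\simeq\sn\apl(K,\delk)$, the Lefschetz duality quasi-isomorphism $\theta_A\colon A\simeq\sn\hoker\alpha\simeq\sn\apl(W,\delw)$, and the fact that $\varphi^!$ is the dual shriek map, the terms and connecting maps are identified so that the relevant portion reads, in each degree $i$,
\[ H_{n-i}(K,\delk)\xrightarrow{a}H_{n-i}(W,\delw)\to H^i(\wsk)\to H_{n-i-1}(K,\delk)\xrightarrow{a}H_{n-i-1}(W,\delw), \]
where $a$ is induced by the inclusion of pairs $(K,\delk)\hookrightarrow(W,\delw)$. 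To control $a$ I would first promote the two connectivity hypotheses $H_{\leq r}(W,K)=0$ and $H_{\leq r}(\delw,\delk)=0$ to $H_{\leq r}(W,K\cup\delw)=0$, using the triple sequence of $(W,K\cup\delw,K)$ together with the excision $H_*(K\cup\delw,K)\cong H_*(\delw,\delk)$. The triple sequence of $(W,K\cup\delw,\delw)$, with the excision $H_*(K\cup\delw,\delw)\cong H_*(K,\delk)$, then shows $a$ is onto in degrees $\leq r$ and injective in degrees $\leq r-1$. For $i\geq n-r$ this makes the left-hand $a$ onto and the right-hand $a$ injective, so $H^i(\wsk)=0$ by exactness. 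Hence $H^{>n-r-1}(C(\varphi^!))=0$, the truncation projection $\pi$ is a quasi-isomorphism, and the composite $A\to\tau^{\leq n-r-1}(C(\varphi^!))$ is a CDGA model of $\wsk\hookrightarrow W$.

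For (iii) and the final sentence, I would take $Q$ to be a cofibrant (semifree) $\ahat$-dgmodule model of $\sn\hoker\beta$; since $\hoker\beta\simeq\apl(K,\delk)$ and $\dim K=k$, the cohomology of $\sn\hoker\beta$ is concentrated in degrees $\geq n-k$, so $Q$ can be generated in those degrees, giving $Q^{<n-k}=0$. The morphism $\varphi^!$ is then produced by lifting $\sn\bar\varphi$ through the quasi-isomorphism $\theta_A\colon A\simeq\sn\hoker\alpha$, using the cofibrancy of $Q$. I expect the main obstacle to be step (ii): correctly identifying the cone's connecting homomorphism with an inclusion of pairs under Poincar\'e--Lefschetz duality, and threading the two nested triple exact sequences needed to convert the hypothesized connectivity into surjectivity and injectivity of $a$ in exactly the range $i\geq n-r$.
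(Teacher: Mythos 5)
Your steps (i) and (iii) coincide with the paper's argument, and your step (ii) does correctly establish the vanishing $H^{>n-r-1}(C(\varphi^!))=0$ (the paper gets the same vanishing a bit more directly, via Lefschetz--Alexander duality $H^{\geq n-r}(\wsk)\cong H_{\leq r}(W,K\cup_{\partial_W K}\partial W)$ and the triple $(W,K\cup_{\partial_W K}\partial W,K)$, rather than through the cone's long exact sequence, but both routes use the same excision and triple-sequence inputs). The genuine gap is in your concluding sentence of step (ii). At that point you know two things: (a) $A\to\tau^{\leq n-r-1}(C(\varphi^!))$ is a CDGA morphism, and (b) it is weakly equivalent to $\apl(W)\to\apl(\wsk)$ \emph{as a morphism of $\ahat$-dgmodules}. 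This does not yield the theorem: being a CDGA model means being connected to $\apl(W)\to\apl(\wsk)$ by a zig-zag of \emph{CDGA} quasi-isomorphisms, and two CDGAs can be quasi-isomorphic as $\ahat$-dgmodules without being quasi-isomorphic as algebras (already for $\ahat=\bq$ this only says the cohomologies agree as graded vector spaces). Nothing in your argument produces a multiplicative comparison map between $\tau^{\leq n-r-1}(C(\varphi^!))$ and $\apl(\wsk)$; that is precisely the hard point of the theorem, not a formality.

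The paper closes exactly this gap with its final step. One takes a minimal relative Sullivan model $\ahat\rightarrowtail(\ahat\otimes\Lambda V,D)\stackrel{\simeq}{\longrightarrow}\apl(\wsk)$ of $\apl(W)\to\apl(\wsk)$; since $(\ahat\otimes\Lambda V,D)$ is a cofibrant $\ahat$-dgmodule, there is a weak equivalence of $\ahat$-dgmodules $\lambda\colon\ahat\otimes\Lambda V\to C(\varphi^!)$ compatible with the maps from $\ahat$. By Lefschetz duality, $H^{<n-k}(W,\wsk)\cong H_{>k}(K,\partial_W K)=0$, so minimality forces $V=V^{\geq n-k-1}$, hence $(\Lambda^{\geq 2}V)^{\leq N}=0$ whenever $N\leq 2(n-k)-3$ (which holds both for the paper's $N=2(n-k)-3$ and for your $N=n-r-1$ under \eqref{e:connectivity}). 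Since the truncation kills all degrees above $N$, the composite $\proj\circ\lambda\colon\ahat\otimes\Lambda V\to\tau^{\leq N}(C(\varphi^!))$ is then automatically multiplicative, i.e.\ a CDGA quasi-isomorphism, and this supplies the missing zig-zag of CDGA quasi-isomorphisms. Note that this is a second, independent use of the unknotting condition, beyond its use in Proposition~\ref{p:truncMC}; your proposal needs this step (or a substitute for it) to be complete.
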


This generalizes the main result of \cite[Theorem
1.2]{LaSt:PE} to manifolds with boundary.
 A first direct  consequence of this theorem is the following
 corollary on the rational
 homotopy invariance of the complement under some
 connectedness-codimension hypotheses.
\begin{cor}
\label{C:mod_conndim}
Let $W$ be a compact triangulated manifold with boundary and $K\subset W$ be a subpolyhedron. Assume that $W$ and $\delw$ are 1-connected and that the inclusions 
\[K \hookrightarrow W \text{     and    } K\cap  \delw \hookrightarrow \delw \]
are $r$-connected with
\begin{equation}
\label{unknot2}
r \geq 2 (\dim K) - \dim W +2.
\end{equation}
Then the rational homotopy type of $\wsk$ depends only on the rational homotopy type of the diagram 
\begin{equation*}
 \xymatrix{
\delk \ar@{^{(}->}^{\partial f}[r] \ar[d]   & \delw\ar[d]  \\
K \ar@{^{(}->}_{f}[r] &W. } 
\end{equation*} 
\end{cor}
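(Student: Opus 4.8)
The plan is to deduce this corollary directly from Theorem \ref{T:mod_conndim}, whose construction takes as its sole input a CDGA model of the square \eqref{diag4} and produces a CDGA model of the inclusion $\wsk \hookrightarrow W$. The work is therefore twofold: first to translate the homotopy-theoretic hypotheses of the corollary into the rational-homology connectivity and codimension hypotheses demanded by the theorem, and then to argue that the output of the construction depends, up to weak equivalence, only on the rational homotopy type of the square.

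First I would fix a CDGA model \eqref{diag3} of the square \eqref{diag4}, that is, a commutative square of CDGAs weakly equivalent to \eqref{diag2}; such a model is determined up to weak equivalence by the rational homotopy type of the square of inclusions, and it is the only datum fed into Theorem \ref{T:mod_conndim}. Next I would convert the connectivity hypotheses. Since $W$ and $\delw$ are $1$-connected and $f$, $\partial f$ are $r$-connected maps, the relative rational Hurewicz theorem gives $H_{\leq r}(W,K;\bq)=0$ and $H_{\leq r}(\delw,\delk;\bq)=0$, which is exactly the connectivity assumption of the theorem. The remaining dimension bookkeeping is immediate: writing $k=\dim K$ and $n=\dim W$, hypothesis \eqref{unknot2} is literally condition \eqref{e:connectivity}, and because $\hoker\beta$ models $\apl(K,\delk)$ with $K$ of dimension $k$ it is concentrated in degrees $\leq k$, so its shifted dual $Q\simeq\sn\hoker\beta$ satisfies $Q^{<n-k}=0$, as required.

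With the hypotheses verified, Theorem \ref{T:mod_conndim} applies and yields a CDGA model $A\to\tau^{\leq n-r-1}(C(\varphi^!))$ of $\wsk\hookrightarrow W$. It then remains to observe that the weak-equivalence type of this CDGA depends only on the rational homotopy type of the square. Indeed, every ingredient is built from the model \eqref{diag3}: the $\ahat$-dgmodule morphism $\bar\varphi$ of \eqref{barfi} is induced by the square, the shriek map $\varphi^!$ is determined up to weak equivalence as a cofibrant replacement of $\sn\bar\varphi$, the weak-equivalence class of the mapping cone $C(\varphi^!)$ depends only on that of $\varphi^!$, and by the uniqueness of CDGA truncations established in Section \ref{s:truncation_cdga} any two truncations below degree $n-r-1$ are weakly equivalent. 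Chaining these independences shows that the resulting rational homotopy type of $\wsk$ is a function of the rational homotopy type of the square alone.

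The main obstacle will be this second half: making precise that the entire construction is invariant under the many choices (the CDGA $\ahat$ in the zig-zag \eqref{rho_rho'}, the cofibrant model $Q$, the map $\varphi^!$, and the truncation). Each individual independence is supplied by a result already in place---Proposition \ref{dual} for the dgmodule model, Proposition \ref{modele_de_dgmod} for replacing $\sn\bar\varphi$ by $\varphi^!$, and the truncation-uniqueness proposition of Section \ref{s:truncation_cdga}---but assembling them into a single statement of the form ``weakly equivalent input squares yield weakly equivalent output CDGAs'' requires tracking naturality through the whole chain. This functoriality, rather than any computation, is the delicate point.
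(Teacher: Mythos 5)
Your proposal is correct and takes essentially the same route as the paper, which gives no separate proof and presents the corollary as a direct consequence of Theorem \ref{T:mod_conndim}: translate the $r$-connectivity of the inclusions into the rational homology hypotheses, check $Q^{<n-k}=0$ and the unknotting inequality, and apply the theorem to a CDGA model of the square. One remark: the functoriality you single out as the delicate point can be avoided entirely, because the theorem's conclusion holds for \emph{every} admissible choice of the model \eqref{diag3}, of $Q$, of $\varphi^!$ and of the truncation; so for two pairs whose squares of inclusions are rationally equivalent one feeds the \emph{same} choices into the construction, and then $\apl$ of both complements is weakly equivalent to one and the same CDGA $\tau^{\leq n-r-1}\bigl(C(\varphi^!)\bigr)$, which yields the invariance statement without tracking naturality through the chain.
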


The hypotheses \eqref{e:connectivity} (or equivalently \eqref{unknot2})  is called \emph{the unknotting condition} and it cannot be removed as shown in \cite[Section 9]{LaSt:PE}.

 \begin{proof}[Proof of Theorem \ref{T:mod_conndim}]
Let  $\ahat$ be a CDGA such that we have a zig-zag of  CDGA quasi-isomorphisms 
\[\xymatrix{\apl(W) &\ar[l]^-{\rho'}_-{\simeq} \hat{A} \ar[r]_-{\rho}^-{\simeq} &A.} \]
Set $N \defeq 2(n-k)-3$.   By  Proposition \ref{p:truncMC} (with
$p=n-k$), 
$\troncon$ admits the structure of a CDGA induced by the semi-trivial
CGA structure on the mapping cone, and
the composite
\[\xymatrix{ A \ar[r]^{\iota \ \ \ } & C(\varphi^!) \ar[r] & \troncon}\]
is a CDGA morphism.

We now prove that $H^{>N}(W\setminus K)=0$ where (co)homology of spaces
is understood with coefficients in $\BQ$. By excision and the
connectedness hypotheses on $H(\partial f)$ and $H(f)$,
\[H_{\leq r}(K\cup_{\partial_W K}\partial W\,,\,K)\,\cong\,H_{\leq
r}(\partial W\,,\,\partial_WK)\,=\,0\]
and
\[H_{\leq
r}(W\,,\,K)\,=\,0.\]
Lefschetz duality and the long exact sequence of the triple\\
$(W\,,\,K\cup_{\partial_WK}\partial W\,,\,K)$ give
\[
H^{\geq n-r}(W\setminus K)\,\cong\,H_{\leq
  r}(W\,,\,K\cup_{\partial_WK}\partial W)\,=\,0.\]
The unknotting hypothesis ($\ref{e:connectivity}$) implies that
$N\geq n-r-1$, therefore $H^{>N}(W\setminus K)=0$.

By Proposition \ref{modele_de_dgmod}, $A\to C(\varphi^!)$ is an $\ahat$-dgmodule model of $\apl(W)\to\apl(\wsk)$.  This implies that
\[
H^{>N} (C(\varphi^!)) \cong H^{>N} (\wsk)=0,
\]
therefore
\[
\operatorname{proj}: C(\varphi^!) \longrightarrow \tau^{\leq N} (C(\varphi^!))
\]
is a quasi-isomorphism.

Thus the CDGA morphism
\[A\to \tau^{\leq N}\left(C(\varphi^!)\right)\]
 is a model of $\hat A$-dgmodules of
$\apl(W)\to\apl(\wsk)$. We will prove that it is actually a CDGA model.

Take a minimal relative Sullivan model (in the sense of
\cite[Chapter 14]{FHT:RHT})
\begin{equation}
\label{E:mimodel}
\xymatrix{\hat{A} \ar@{->>}[r]^{\rho'}_{\simeq} \ar@{ >->}[dr] & \apl(W)\ar@{->>}[r] & \apl(\wsk) \\
&(\hat{A}\otimes \Lambda V,D). \ar@{->>}[ur]^{\lambda'}_{\simeq}}
\end{equation}
By Proposition \ref{modele_de_dgmod}, $\xymatrix{ \ahat \ar@{ >->}[r]
  & \ahat\otimes \Lambda V}$ is an $\ahat$-dgmodule model of $A\to
C(\varphi^!)$.    Since $(\hat A \otimes \Lambda V, D)$ is a cofibrant
$\hat A$-dgmodule,  we can construct a weak equivalence of
$\ahat$-dgmodules
\[\lambda\colon  \ahat\otimes \Lambda V\to C(\varphi^!)\]
making commute 
 the  following diagram, where the upper
part is 
of CDGA and the lower part is of $\hat A$-dgmodules, 
\begin{equation}
\label{D:diagmodel}
\xymatrix{\apl(W)\ar[r]& \apl(\wsk)& \\
\ahat  \ar[d]^{\simeq}_{\rho} \ar[u]_{\simeq}^{\rho'} \ar@{>->}[r] &
\ahat\otimes \Lambda V
\ar[dr]_-{\simeq}^-{\bar\lambda=\proj\circ\lambda}\ar[u]_{\simeq}^{\lambda'}
\ar@{-->}[d]^{\simeq}_{\lambda}
& \\
 A \ar[r] & C( \varphi^!)  \ar[r]^-{\simeq}_-{\proj} & \troncon. } 
\end{equation}

By Lefschetz duality and the hypothesis on the dimension of $K$
\[
H^{<n-k} (W,\wsk)   \cong H_{>k} (K,\partial_W K)=0.
\]
By minimality of the Sullivan relative model
\eqref{E:mimodel}, this implies that $V^{<n-k-1}=0$.     Therefore $(\Lambda^{\geq 2}V)^{\leq N}=0$ and, since $(\tau^{\leq N}C(\varphi^!))^{>N}=0$, this implies that the composition \label{rho} 
\[
\bar\lambda :\ahatv \stackrel{\lambda}{\longrightarrow}C(\varphi^!) \stackrel{\textrm{proj}}{\longrightarrow} \tau^{\leq N} (C(\varphi^!))
\] 
is a morphism of CDGA. Thus all the solid arrows in Diagram
$(\ref{D:diagmodel})$ are of CDGAs.  This achieves to prove that $A\to
\tau^{\leq N} (C(\varphi^!))$ is a CDGA model of  $\wsk
\hookrightarrow W$, as claimed.

It remains to prove the existence of an $A$-dgmodule $Q$ and a
morphism $\varphi^!$.  Since $H^{>k}$(hoker $\beta$) $\cong H^{>k}(K,\partial_W K)=0$, we have $H^{<n-k} (s^{-n} \#$ hoker $\beta)=0$.    Therefore there exists a cofibrant $A$-dgmodule model $Q$ of $s^{-n} \#$ hoker $\beta$ such that $Q^{<n-k}=0$.    Since, by Poincar\'e duality, $s^{-n}\#$ hoker $\alpha \simeq A$, there exists an $A$-dgmodule morphism
\[
\varphi^! : Q \longrightarrow A
\]
weakly equivalent to $s^{-n}\# \bar \varphi$.
\end{proof}

Actually even when the unknotting condition \eqref{e:connectivity} of
Theorem \ref{T:mod_conndim} is not satisfied, we still   get a partial
model of $\wsk$. More precisely we get a CDGA model of $\wsk$ up to
some degree,  i.e. a model of the truncation of $\apl(\wsk)$.  This is
the content of the next proposition.

\begin{prop}
\label{modele_truncation}
Consider the same hypotheses as in Theorem \ref{T:mod_conndim} except that we do not assume the unknotting condition \eqref{e:connectivity}

Let $l \colon A(W) \to A (\wsk)$ be a CDGA model of $\apl(W) \to \apl(\wsk)$ such that $A(W)$ and $A(\wsk)$ are connected. Set $N=2(n-k)-3$. Then the CDGA morphism 
\[  A  \to \troncon \]
is a CDGA model of the composite  
\[\pi \circ l \colon A(W) \hookrightarrow A(\wsk) \to \tau^{\leq N} A(\wsk). \]

\end{prop}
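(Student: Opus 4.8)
The plan is to rerun the proof of Theorem~\ref{T:mod_conndim} and to observe that the unknotting condition \eqref{e:connectivity} is invoked at exactly one point, which we can now simply discard. First I would set $N = 2(n-k)-3$ and, verbatim as in that proof, apply Proposition~\ref{p:truncMC} with $p=n-k$ (legitimate since $Q^{<n-k}=0$ and $N\le 2(n-k)-3$) to equip $\troncon$ with its semi-trivial CDGA structure and to make $A\to\troncon$ a CDGA morphism. I would then reconstruct Diagram~\eqref{D:diagmodel}: take the minimal relative Sullivan model \eqref{E:mimodel}, namely $\ahat\rightarrowtail(\ahat\otimes\Lambda V,D)$ together with the quasi-isomorphism $\lambda'\colon \ahat\otimes\Lambda V\to\apl(\wsk)$, lift it to a dgmodule quasi-isomorphism $\lambda\colon \ahat\otimes\Lambda V\to C(\varphi^!)$, and form $\bar\lambda=\proj\circ\lambda$. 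The Lefschetz-duality computation $H^{<n-k}(W,\wsk)\cong H_{>k}(K,\delk)=0$ forces $V^{<n-k-1}=0$, hence $(\Lambda^{\ge 2}V)^{\le N}=0$; together with $(\troncon)^{>N}=0$ this makes $\bar\lambda$ a CDGA morphism exactly as in Theorem~\ref{T:mod_conndim}. None of these steps uses \eqref{e:connectivity}.

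The sole role of the unknotting condition was to ensure $H^{>N}(\wsk)=0$, and thereby that $\proj\colon C(\varphi^!)\to\troncon$ is a quasi-isomorphism; I would drop that conclusion. What survives is the key observation that $\bar\lambda$ still induces an \emph{isomorphism} on $H^{\le N}$: indeed $\lambda$ is a dgmodule quasi-isomorphism in all degrees, while $\proj$ induces isomorphisms on $H^{i}$ for $i\le N$ by the very definition of the truncation $\troncon$, so $H^{\le N}(\bar\lambda)=H^{\le N}(\proj)\circ H^{\le N}(\lambda)$ is an isomorphism. Thus $\bar\lambda\colon \ahat\otimes\Lambda V\to\troncon$ is a CDGA morphism into a CDGA concentrated in degrees $\le N$ and inducing an isomorphism on $H^{\le N}$; in other words $(\troncon,\bar\lambda)$ is, up to weak equivalence, a CDGA truncation below degree $N$ of $\ahat\otimes\Lambda V$.

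To finish I would compare this truncation with the given one. Since $\lambda'$ is a CDGA quasi-isomorphism, $\ahat\otimes\Lambda V$ is a connected CDGA model of $\wsk$, weakly equivalent to $A(\wsk)$ compatibly with the maps out of $\ahat\simeq A\simeq A(W)$. The uniqueness-up-to-weak-equivalence of CDGA truncations established in Section~\ref{s:truncation_cdga} then yields a weak equivalence $\troncon\simeq\tau^{\le N}A(\wsk)$ over the common source. Reading this off as a weak equivalence of CDGA morphisms, and using that $A\to C(\varphi^!)$ is the dgmodule model of $\apl(W)\to\apl(\wsk)$ supplied by Proposition~\ref{modele_de_dgmod}, gives that $A\to\troncon$ is a CDGA model of $\pi\circ l\colon A(W)\to\tau^{\le N}A(\wsk)$, as claimed.

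The step I expect to be the main obstacle is this last comparison, precisely because $\tau^{\le N}$ is neither functorial nor unique, as stressed right after its definition. I would handle it by the relative-Sullivan-model device used to prove the uniqueness proposition of Section~\ref{s:truncation_cdga}: build a common homology-truncating model $(\ahat\otimes\Lambda V)\rightarrowtail\bigl((\ahat\otimes\Lambda V)\otimes\Lambda Z,D\bigr)$ with $H^{\le N}$ an isomorphism, $Z=Z^{\ge N}$ and $H^{>N}=0$, and extend both $\bar\lambda$ and the projection $A(\wsk)\to\tau^{\le N}A(\wsk)$ over the cofibration by sending $Z$ to $0$. This is legitimate because $|z|\ge N$ forces $Dz$ into degrees $>N$, where both targets vanish, so the extensions are genuine CDGA morphisms; a degree count then shows they are quasi-isomorphisms, producing the required zig-zag of CDGA quasi-isomorphisms over $\ahat$ and hence the weak equivalence of the two CDGA morphisms.
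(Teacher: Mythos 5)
Your proposal is correct and takes the route the paper intends: the paper's own proof of this proposition is just the remark that it is very similar to that of Theorem \ref{T:mod_conndim}, with the details left to the reader. You have supplied exactly those details, correctly isolating the sole role of the unknotting condition (forcing $H^{>N}(\wsk)=0$, hence that $\proj\colon C(\varphi^!)\to\troncon$ is a quasi-isomorphism) and replacing that step by the comparison of CDGA truncations via the relative Sullivan model device used for the uniqueness result of Section \ref{s:truncation_cdga}.
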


\begin{proof} [Proof of Proposition \ref{modele_truncation}] The proof
  is very similar to that of Theorem \ref{T:mod_conndim}. The details
  to change are left to the reader. 
\end{proof}

\begin{rem}
We would have preferred in Proposition \ref{modele_truncation} to
state that $A\to \tau^{\leq N} (C(\varphi^!))$ is a CDGA model of
$\apl{(W)}\to \tau^{\leq N} (\apl(\wsk))$.    But the latter is not
well defined because $\apl(\wsk)$ is not connected and hence we cannot
take its truncation.   This is the reason for consi\-dering instead a
model 
$l\colon A(W)\to A(\wsk)$ between \emph{connected} CDGAs.
\end{rem}

Note that $N=n-(2k-n+2)-1$ and therefore, under the unknotting condition $r \geq 2k-n+2$, we have that $N\geq n-r-1$. But,  Poincar\'e duality and the $r$-connectedness implies that $H^{\geq n-r} (\wsk)=0$. Hence Theorem \ref{T:mod_conndim} is actually a corollary of Proposition \ref{modele_truncation}

\section{Rational model of the configuration space of two points in a manifold with boundary}
\label{S:ConfW2}
\medskip

In this section we use the results of Section \ref{s:complement} to describe the rational homotopy type of the configuration space of two points in a compact manifold with boundary under 2-connectedness hypotheses.    
In particular we prove in Corollary \ref{c:rht_fw2} that the rational
homotopy type of $\Conf(W,2)$ depends only on the rational homotopy
type of the pair $(W,\delw)$ when $W$ and $\delw$ are 2-connected. We
also construct in Theorem \ref{t:modele_adgc_FW2_1} an explicit CDGA
model of $\Conf(W,2)$. Moreover in Theorem \ref{T:modele_FW2} we
describe an elegant CDGA model for $\Conf(W,2)$ when the pair
$(W,\partial W)$ admits a pretty surjective model in the sense of
\cite{CLS:pretty}.

Fix a compact connected orientable manifold of dimension $n$, $W$, with boundary $\delw$.  
Let \[\Delta : W \hookrightarrow W\times W \ ; x\mapsto (x,x)\]
be the diagonal embedding. The configuration space of two points in $W$ is the complementary space
\[\Conf(W,2):= (W\times W) \backslash \Delta(W)=\{ (x,y) \in W\times W | x\neq y\}.\]
 Notice that the diagonal embedding $\Delta$ is such that
 $\Delta(\partial W) \cong \partial W$ and $\Delta^{-1} (\delw \times
 \delw) = \delw$.     In other words, with the notation of \eqref{E:dWK}, 
 $$
 \partial_{W\times W}\left(\Delta(W)\right)=\Delta(\partial W) \cong \partial W.
 $$ 
   Therefore, according to Corollary \ref{C:mod_conndim}, if $W$ and
   $\partial W$ are connected enough,  then the rational homotopy type
   of $\Conf(W,2)=W\times W \backslash \Delta (W)$ is determined by
   the square $(\ref{diagFW2_2})$ of Proposition
   $\ref{modele_carre_FW2}$ below.   The goal of the next section is to compute a CDGA model of that square.

\subsection{CDGA model of the diagonal embedding of the pair $(W,\delw)$ into $(W\times W, \partial (W\times  W))$}

The goal of this section is to prove the following proposition.

\begin{prop}
\label{modele_carre_FW2}
Let $W$ be a compact connected orientable manifold with \  boundary  \ $\delw$.  \ Suppose  \ given  \ a CDGA surjective model $\xymatrix{\beta \colon B \ar@{->>}[r]& \delb}$ of the inclusion  $\delw \hookrightarrow W$. Then a CDGA model of the square
\begin{equation}
\label{diagFW2_2}
\xymatrix{W\times W && \ar@{_{(}->}[ll]_-{\Delta} W  \\
  \partial(W\times W) \ar@{_{(}->}[u]& &\ar@{_{(}->}[ll]^-{\partial\Delta}  \delw. \ar@{_{(}->}[u]}
\end{equation}
where $\Delta$ is the diagonal map and $\partial \Delta$ is the composition $\delw \stackrel{\Delta}{\hookrightarrow} \delw \times \delw \hookrightarrow \partial (W \times W)$
is given by the CDGA square
\begin{equation}
\label{diagFW2ADGC_2}\xymatrix{\btb \ar[r]^{\mu}\ar[d]_{\alpha} & B \ar[d]^{\beta}\\
\frac{\btb}{(\ker\beta \otimes \ker \beta)} \ar[r]_-{\tilde \mu} & \delb }
\end{equation}
where $\mu$ is the multiplication, $\alpha$ is the projection on the quotient, and $\tilde{\mu}$ the map induced by $\beta \circ \mu$.

\end{prop}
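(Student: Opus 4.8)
The plan is to verify that the CDGA square \eqref{diagFW2ADGC_2} models the topological square \eqref{diagFW2_2} one corner and one edge at a time. The two right-hand corners and the right vertical edge are immediate from the hypothesis: $B$ models $W$, $\delb$ models $\delw$, and $\beta\colon B\epi\delb$ models $\delw\hookrightarrow W$. For the top-left corner, Künneth gives that $\btb$ models $W\times W$, and the diagonal $\Delta\colon W\to W\times W$ is classically modelled by the multiplication $\mu\colon\btb\to B$, since on $\apl$ the map induced by $\Delta$ becomes, through the Künneth isomorphism, the product. The only genuinely new point is therefore to identify the bottom-left corner $\frac{\btb}{\ker\beta\otimes\ker\beta}$, together with the maps $\alpha$ and $\tilde\mu$, as a model of $\partial(W\times W)$ with its inclusion into $W\times W$ and with the map $\partial\Delta$.

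For this I would first write the boundary of the product as the homotopy pushout
\[\partial(W\times W)=(\delw\times W)\cup_{\delw\times\delw}(W\times\delw),\]
the gluing inclusions being cofibrations. Applying $\apl$ and using its Mayer--Vietoris/excision property turns this homotopy pushout into a homotopy pullback of CDGAs, whose three remaining corners are modelled by $\delb\otimes B$, $B\otimes\delb$ and $\delb\otimes\delb$, with structural maps $\id\otimes\beta$ and $\beta\otimes\id$. Because $\beta$ is surjective, both of these maps are surjective, hence fibrations of CDGAs; therefore the strict pullback
\[P\defeq(\delb\otimes B)\times_{\delb\otimes\delb}(B\otimes\delb)\]
computes the homotopy pullback and is a model of $\partial(W\times W)$.

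It then remains to identify $P$ with $\frac{\btb}{\ker\beta\otimes\ker\beta}$. I would consider the CDGA map
\[\btb\longrightarrow(\delb\otimes B)\times(B\otimes\delb),\qquad z\longmapsto\bigl((\beta\otimes\id)(z),(\id\otimes\beta)(z)\bigr),\]
whose image lands in $P$. Choosing a linear splitting $B=\ker\beta\oplus C$ with $\beta|_C$ an isomorphism onto $\delb$, a direct computation shows that its kernel is exactly $\ker\beta\otimes\ker\beta$ and that it surjects onto $P$; hence it induces an isomorphism of CDGAs $\frac{\btb}{\ker\beta\otimes\ker\beta}\xrightarrow{\cong}P$ (the quotient makes sense because $\ker\beta$ is a differential ideal). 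Under this isomorphism the projection $\alpha$ is precisely the map modelling the inclusion $\partial(W\times W)\hookrightarrow W\times W$.

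Finally I would check the map $\tilde\mu$. Writing $\partial\Delta$ as the composite $\delw\xrightarrow{\Delta_{\delw}}\delw\times\delw\hookrightarrow\partial(W\times W)$, its $\apl$-model is $P\to\delb\otimes\delb\xrightarrow{\mu_{\delb}}\delb$, where $\mu_{\delb}$ is the multiplication of $\delb$ modelling the diagonal of $\delw$. Tracing $\overline{a\otimes b}$ through the isomorphism $\frac{\btb}{\ker\beta\otimes\ker\beta}\cong P$ and then down to $\delb$ yields $\beta(a)\beta(b)=\beta(ab)$, which is exactly $\tilde\mu(\overline{a\otimes b})=\beta\circ\mu(a\otimes b)$; the well-definedness of $\tilde\mu$ (that is, $\ker\beta\otimes\ker\beta\subseteq\ker(\beta\circ\mu)$) and the commutativity $\tilde\mu\circ\alpha=\beta\circ\mu$ are then immediate. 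This shows $\tilde\mu$ models $\partial\Delta$ and that the square commutes, completing the identification. The main obstacle is the middle step: correctly recognizing $\frac{\btb}{\ker\beta\otimes\ker\beta}$ as the homotopy pullback modelling $\partial(W\times W)$, which relies essentially on the surjectivity of $\beta$ to guarantee that the strict pullback is the homotopy pullback.
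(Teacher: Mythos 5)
Your proposal is correct and follows essentially the same route as the paper: decompose $\partial(W\times W)$ as the homotopy pushout $(W\times\delw)\cup_{\delw\times\delw}(\delw\times W)$, use that $\apl$ converts it to a homotopy pullback which, by surjectivity of $\beta$, is computed by the strict pullback of $\beta\otimes\id$ and $\id\otimes\beta$, and then identify that pullback with $\frac{\btb}{\ker\beta\otimes\ker\beta}$ via the kernel/surjectivity computation (this is exactly the paper's Lemma \ref{l:PBCDGA}, and your homotopy-pullback step is its Lemma \ref{lem_po4}). The only cosmetic difference is that the paper packages the argument as a model of the full six-corner diagram \eqref{D2} before restricting to the upper-right square, whereas you work corner by corner.
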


The rest of this section is devoted to the proof of this result and for the rest of it we will use the notations introduced in the proposition.
First, notice that, since $W$ is a manifold with boundary, $W\times W $ is also a manifold with boundary \[\partial(W\times W)= W\times \delw \cup_{\delw\times\delw} \delw \times W.\]
In other words we have a pushout (and homotopy pushout)  
\begin{equation}
\label{d:bound}
\xymatrix{
\partial(W\times W) \ar@{}[rd]|{\textrm{pushout}}&\ar@{_{(}->}[l] W \times (\partial W) \\
(\partial W)\times W\ar@{^{(}->}[u] & \ar@{_{(}->}[l]\ar@{^{(}->}[u] \partial W \times \partial W.}
\end{equation} 
 

The key argument to prove Proposition \ref{modele_carre_FW2} is  that
Diagram \eqref{diagFW2_2} is the right upper half of the following diagram
\begin{equation}
\label{D2}
\xymatrix{ W\times W &&\ar@{_{(}->}[ll]_{\Delta} W \\
 \ar@{^{(}->}[u]\partial(W\times W) \ar@{}[rd]|{\textrm{pushout}}  & \ar@{_{(}->}[l]W\times (\partial W) \\
(\partial W) \times W \ar@{^{(}->}[u] & \ar@{_{(}->}[l]\delw \times \delw \ar@{^{(}->}[u] & \ar@{_{(}->}[l] \partial W\ar@{^{(}->}[uu]} 
\end{equation}
where the maps are the obvious inclusions and diagonals, and the small
left lower square in \eqref{D2} is the homotopy pushout \eqref{d:bound}.

\begin{lem}
\label{lem_po4}
The following diagram is a CDGA model of diagram \eqref{D2} 
\begin{equation}
\label{po4}
\xymatrix{B\otimes B \ar[d]^{\alpha}\ar[rr]^-{\mu}&& B\ar@{->>}[dd]_-{\beta}\\
P \pullback\ar[r]\ar[d] & B\otimes \delb\ar@{->>}[d]^{\beta\otimes \oid} &\\
\delb\otimes B  \ar@{->>}[r]_{\oid \otimes \beta} &\delb \otimes \delb \ar[r]^{\mu}& \delb
}
\end{equation}
where $P$ is the pullback of the small square, $\alpha$ is the morphism given by the universal property, and $\mu$ are the multiplication morphisms.
\end{lem}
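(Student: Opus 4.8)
The plan is to assemble the candidate model \eqref{po4} corner by corner, starting from the surjective model $\beta\colon B\epi\delb$ of $\delw\hookrightarrow W$, by combining two standard facts of rational homotopy theory with the duality between homotopy pushouts of spaces and homotopy pullbacks of CDGAs. Concretely, I would first recall that for connected spaces of finite type the tensor product of CDGA models is a model of the Cartesian product, and that the multiplication $\mu\colon C\otimes C\to C$ of a model $C$ of a space $X$ is a model of the diagonal $\Delta\colon X\to X\times X$ (see \cite[Chapter 12]{FHT:RHT}).

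Applying these facts to the model $\beta$ gives immediately that $\btb$, $B\otimes\delb$, $\delb\otimes B$ and $\delb\otimes\delb$ are models of $W\times W$, $W\times\delw$, $\delw\times W$ and $\delw\times\delw$ respectively; that $\beta\otimes\oid$ and $\oid\otimes\beta$ model the inclusions $\delw\times\delw\hookrightarrow W\times\delw$ and $\delw\times\delw\hookrightarrow\delw\times W$; and that the two multiplications $\mu\colon\btb\to B$ and $\mu\colon\delb\otimes\delb\to\delb$ model the diagonals $\Delta\colon W\to W\times W$ and $\delw\to\delw\times\delw$. The vertical map $\beta\colon B\epi\delb$ models $\delw\hookrightarrow W$ by hypothesis. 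Thus every corner and every edge of \eqref{po4} except $P$ and the maps into it is already matched to the corresponding corner and edge of \eqref{D2}.

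The heart of the argument is to identify $P$ as a model of $\partial(W\times W)$. Since $\partial(W\times W)$ is the homotopy pushout \eqref{d:bound} of $W\times\delw$ and $\delw\times W$ along $\delw\times\delw$, and since $\apl$ carries homotopy pushouts to homotopy pullbacks, a model of $\partial(W\times W)$ is the homotopy pullback of the diagram of models $B\otimes\delb\xrightarrow{\beta\otimes\oid}\delb\otimes\delb\xleftarrow{\oid\otimes\beta}\delb\otimes B$. Because $\beta$ is surjective, both $\beta\otimes\oid$ and $\oid\otimes\beta$ are surjective, hence fibrations of CDGAs; therefore the \emph{strict} pullback $P$ already computes the homotopy pullback. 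Consequently $P$ is a model of $\partial(W\times W)$ and its two projections model the inclusions $W\times\delw\hookrightarrow\partial(W\times W)$ and $\delw\times W\hookrightarrow\partial(W\times W)$.

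Finally, the map $\alpha\colon\btb\to P$ obtained from the universal property of the pullback, sitting over $\beta\otimes\oid$ and $\oid\otimes\beta$, models the inclusion $\partial(W\times W)\hookrightarrow W\times W$; combined with $\mu$ modelling $\Delta$, this exhibits the whole commutative diagram \eqref{po4} as a model of \eqref{D2}, and in particular its upper right half $\btb\xrightarrow{\mu}B$, $\btb\xrightarrow{\alpha}P$, $B\xrightarrow{\beta}\delb$ as a model of \eqref{diagFW2_2}. The \textbf{main obstacle} is the homotopy pushout/pullback step: one must justify both that $\apl$ sends the homotopy pushout \eqref{d:bound} to a homotopy pullback and that the surjectivity of $\beta\otimes\oid$ and $\oid\otimes\beta$ allows the honest pullback $P$ to compute that homotopy pullback; once this is in place, the remaining verifications are the routine naturality and commutativity checks for the assembled square.
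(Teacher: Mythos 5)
Your proposal is correct and takes essentially the same route as the paper: the paper's proof likewise combines the classical models for products and diagonals, the fact that $\apl$ turns the homotopy pushout \eqref{d:bound} into a homotopy pullback, and the fact that a strict pullback of CDGA surjections computes the homotopy pullback. The only difference is one of bookkeeping: the paper first writes down the corresponding pullback diagram at the level of $\apl(W)$, $\apl(\delw)$ and then transfers it to the $B$-side through a zigzag of diagram quasi-isomorphisms, which is exactly the ``routine naturality'' step you defer at the end.
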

\begin{proof}[Proof   of Lemma \ref{lem_po4}]  

Using the classical CDGA models for products and diagonal maps on spaces, the fact
that $\apl$ turns homotopy pushout of topological spaces into homotopy
pullbacks of CDGAs, that a pullback of CDGA surjections is a homotopy
pullback, and standard techniques in rational homotopy theory
we get that a CDGA model of Diagram \eqref{D2} is given by the following diagram, where $P'$ denotes the pullback of the left bottom corner of the square,
\begin{equation}
\label{po3}
\xymatrix{\apl(W)\otimes \apl(W) \ar[d]\ar[rr]^{mult} && \apl(W)\ar[dd] \\
P' \pullback \ar[r] \ar[d]& \apl(W)\otimes \apl(\delw)\ar@{->>}[d] & \\
\apl(\delw) \otimes \apl(W) \ar@{->>}[r]&\apl(\delw) \otimes \apl(\delw) \ar[r]& \apl(\delw).}
\end{equation}
This diagram is easily seen to be equivalent to Diagram \eqref{po4}.



\PL{Since $\xymatrix{\beta \colon B \ar@{->>}[r]& \delb}$ is a CDGA model of the morphism  $\delw \hookrightarrow W$, it is easy to construct a surjective CDGA morphism $\xymatrix{\hat \beta \colon\hat B \ar@{->>}[r]& \widehat \delb}$ such that the diagram below commutes
\begin{equation}
\label{bhat}
\xymatrix{B\ar@{->>}[d]_\beta & \ar[l]\hat{B} \ar[r]\ar@{->>}[d]_{\hat{\beta}}& \apl(W)\ar@{->>}[d] \\
\delb &\ar[l] \widehat\delb \ar[r]& \apl(\delw).}
\end{equation}

%

Let us show that the diagram
\begin{equation}
\label{po5}
\xymatrix{
\hat B\otimes \hat B \ar[d]^{\hat \alpha}\ar[rr]^-{\mu}&& \hat B\ar@{->>}[dd]_-{\hat\beta}
 \\
\hat{PB} \ar[r]\ar[d] & \hat B\otimes \hat \delb\ar@{->>}[d]_{\hat\beta\otimes \oid} &\\
\widehat\delb\otimes \hat{B} \ar@{->>}[r]^{\oid \otimes \hat{\beta}} &\widehat\delb \otimes \widehat\delb \ar[r]^-{\mu}& \widehat\delb,}
\end{equation}
where $\hat PB$ is the pullback of the upper left square, $\hat\alpha$ is the morphism given by the universal property and $\mu$ describes the multiplication morphism, is a CDGA model of Diagram \eqref{po3}. 

For this, look at the following quasi-isomorphisms diagram 
{\small\begin{equation}
\label{qi_po2}
\xymatrix{\hat \delb\otimes \hat B\ar[d]_{\simeq} \ar@{->>}[r]& \widehat\delb \otimes \widehat\delb\ar[d]_{\simeq} & \ar@{->>}[l]\hat B\otimes \widehat\delb \ar[d]^{\simeq} \\
\apl(\delw)\otimes \apl(W) \ar@{->>}[r]& \apl(\delw) \otimes \apl(\delw) & \ar@{->>}[l]\apl(W)\otimes \apl(\delw). }\end{equation}}

By universal property of the Pullback $P'$ described in Diagram \eqref{po3}, there exists a unique CDGA morphism 
\[\hat{\rho} \colon \hat{PB} \to P'\]
which is, since all the vertical arrows in \eqref{qi_po2} are quasi-isomorphisms, a quasi-isomorphism.  

On the other hand, we have, once again by the universal property of the pullback P', a unique morphism 
\[v \colon \apl(W)\otimes \apl(W) \to P'\]
and by universal property of $\hat {PB}$ 
\[\hat{\alpha} \colon \hat B \otimes \hat B \to \hat{PB}.\]
 We easily verify that the morphisms
\[\xymatrix{\hat B\otimes \hat B \ar[r]^{\hat{\alpha}} & \hat {PB} \ar[r]^{\hat{\rho}} & P'} \]
and
\[\xymatrix{\hat B\otimes \hat B \ar[r] & \apl(W)\otimes \apl(W) \ar[r]^-v & P'}\]
verifies the universal property. Since this morphism is unique, the following diagram commutes 
\[\xymatrix{\hat B\otimes \hat B \ar[r]^-{\hat \alpha}\ar[d]_{\simeq} & \hat PB \ar[d] ^{\simeq} \\
\apl(W)\otimes \apl(W) \ar[r]^-{\upsilon} & P'.}\]
Moreover, the fact that  $\mu \colon \hat{B} \otimes \hat B \to \hat B $ is a CDGA model of the morphism $\mu\colon \apl(W)\otimes \apl(W) \to \apl(W)$ finalizes to prove that $\eqref{po5}$ is a CDGA model of the diagram \eqref{po3}.
An analogous reasonment shows that \eqref{po5} is a CDGA model of
\eqref{po4}.}

\end{proof}

The following lemma computes the small lower  left pullback square in Diagram $(\ref{po4})$.

\begin{lem}
\label{l:PBCDGA}
We have a pullback in CDGA 
\begin{equation}
\label{PBCDGA}\xymatrix{\frac{\btb}{(\ker\beta \otimes \ker \beta)}
  \ar[r]^{\overline{id_B \otimes \beta}}\ar[d]_{\overline{\beta\otimes
      id_B}} &
 B\otimes \delb \ar[d]^{\beta\otimes id_{\partial  B}}\\
 \delb \otimes B\ar[r]_-{id_{\partial B}\otimes \beta} & \delb\otimes \delb. }
\end{equation}
\end{lem}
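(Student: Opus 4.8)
The plan is to prove the lemma by exhibiting the canonical comparison morphism from $\frac{\btb}{(\ker\beta\otimes\ker\beta)}$ to the pullback $P$ of the lower–right corner and showing it is an isomorphism. First I would record the formal facts. Since $\beta$ is a surjective CDGA morphism, $\ker\beta$ is a differential ideal of $B$, so $\ker\beta\otimes\ker\beta$ is a differential ideal of $\btb$ (stable under multiplication because $\ker\beta$ is an ideal, and under $d$ because $\ker\beta$ is $d$-stable); hence $\frac{\btb}{(\ker\beta\otimes\ker\beta)}$ is a CDGA. Both $\id_B\otimes\beta$ and $\beta\otimes\id_B$ vanish on $\ker\beta\otimes\ker\beta$, so they descend to the CDGA morphisms $\overline{\id_B\otimes\beta}$ and $\overline{\beta\otimes\id_B}$, and the square commutes because both composites into $\delb\otimes\delb$ equal the descent of $\beta\otimes\beta$. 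Because the forgetful functor from CDGAs to cochain complexes (indeed to graded $\bq$-vector spaces) creates limits, and the pullback of a corner of CDGAs is computed componentwise as $P=\{(u,v)\in(B\otimes\delb)\oplus(\delb\otimes B)\mid(\beta\otimes\id)(u)=(\id\otimes\beta)(v)\}$, it suffices to check that the induced comparison map $\chi\colon\frac{\btb}{(\ker\beta\otimes\ker\beta)}\to P$ is bijective.

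For injectivity I would compute the kernel of the combined map $(\id\otimes\beta,\beta\otimes\id)\colon\btb\to(B\otimes\delb)\oplus(\delb\otimes B)$ before passing to the quotient. Over the field $\bq$ the functor $-\otimes-$ is exact, so $\ker(\id_B\otimes\beta)=B\otimes\ker\beta$ and $\ker(\beta\otimes\id_B)=\ker\beta\otimes B$. Choosing a graded-linear splitting $B=\ker\beta\oplus S$ of $\beta$ yields $\btb=(\ker\beta\otimes\ker\beta)\oplus(\ker\beta\otimes S)\oplus(S\otimes\ker\beta)\oplus(S\otimes S)$, whence $B\otimes\ker\beta=(\ker\beta\otimes\ker\beta)\oplus(S\otimes\ker\beta)$ and $\ker\beta\otimes B=(\ker\beta\otimes\ker\beta)\oplus(\ker\beta\otimes S)$, so their intersection is exactly $\ker\beta\otimes\ker\beta$. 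Thus the combined map has kernel $\ker\beta\otimes\ker\beta$ and $\chi$ is injective.

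For surjectivity I would argue in either of two ways. Since both $\beta\otimes\id$ and $\id\otimes\beta$ are surjective, the map $\Psi\colon(B\otimes\delb)\oplus(\delb\otimes B)\to\delb\otimes\delb$, $\Psi(u,v)=(\beta\otimes\id)(u)-(\id\otimes\beta)(v)$, is onto with kernel $P$; reading degreewise ranks off $0\to P\to(B\otimes\delb)\oplus(\delb\otimes B)\to\delb\otimes\delb\to0$ gives the same degreewise dimension as $\frac{\btb}{(\ker\beta\otimes\ker\beta)}$, so the injection $\chi$ is an isomorphism. To avoid any finiteness hypothesis one can instead write down a preimage explicitly: with $\sigma\colon\delb\to S\subset B$ the section inverse to $\beta|_S$, and writing $u=u_{\ker}+u_S$, $v=v_{\ker}+v_S$ for the $\ker\beta$- and $S$-components, the element $(\id\otimes\sigma)(u_{\ker})+(\sigma\otimes\id)(v_{\ker})+(\id\otimes\sigma)(u_S)$ of $\btb$ maps to $(u,v)$. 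The main obstacle is precisely this last verification: one must use the pullback compatibility relation $(\beta\otimes\id)(u)=(\id\otimes\beta)(v)$ together with the injectivity of $\id\otimes\beta|_S$ on $\delb\otimes S$ to see that the single lift of the $S\otimes S$ component simultaneously produces $u_S$ in $B\otimes\delb$ and $v_S$ in $\delb\otimes B$. Once $\chi$ is shown to be a bijective CDGA morphism it is an isomorphism, so the square \eqref{PBCDGA} is a pullback in CDGA.
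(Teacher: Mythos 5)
Your proof is correct and takes essentially the same route as the paper's: both exhibit the canonical comparison map from $\frac{\btb}{(\ker\beta\otimes\ker\beta)}$ to the componentwise pullback $P$ of the lower-right corner and show it is bijective (kernel equal to $\ker\beta\otimes\ker\beta$, plus surjectivity), hence an isomorphism of CDGAs. The paper simply declares these two verifications ``straightforward to check''; your graded splitting $B=\ker\beta\oplus S$ and the explicit lift supply exactly the details the paper omits.
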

\begin{proof}
Consider the following diagram of CDGA's where the internal square is a pullback and $\alpha$ is the map induced by the universal property: 
\[
\xymatrix{ \btb \ar@{-->}[rd]^{\alpha} \ar[rrd]^{\beta \otimes id_B} \ar[rdd]_{id_\beta\otimes \beta} \\
& P\ar[r]\ar[d] &  B\otimes \partial B\ar@{->>}[d]^{\beta \otimes id_{\partial B}} \\
&\delb\otimes B \ar@{->>}[r]_{ id_{\partial B} \otimes \beta } & \delb\otimes \delb. }\]
It is straightforward to check that $\alpha$ is surjective and that 
\[\ker\alpha = \ker \beta \otimes \ker \beta.\]
Therefore we have an induced isomorphism 
\[\overline{\alpha} \colon \frac{B\otimes B}{\ker \beta \otimes \ker \beta}  \stackrel{\cong}{\longrightarrow} P.\]

\end{proof}

\begin{proof}[Proof of Proposition \ref{modele_carre_FW2}]
Diagram  (\ref{diagFW2_2}) is the upper right part of Diagram (\ref{D2}), therefore, by Lemma  \ref{lem_po4}, a CDGA model of (\ref{diagFW2_2})  is given by the upper right part of (\ref{po4}).   Using Lemma \ref{l:PBCDGA} which computes the pullback $P$, we deduce that this CDGA model is $(\ref{diagFW2ADGC_2})$.
\end{proof}

\subsection{A first CDGA model of $\Conf(W,2)$}

Let $\betab$ be a surjective CDGA model of $i \colon\delw \hookrightarrow W$. Using the results of Section~\ref{s:complement}, a CDGA model  of $\Conf(W,2)=W\times W\backslash \Delta(W)$ can be obtained from a CDGA model of 
\begin{equation}
\label{diagFW2_3}
\xymatrix{W\times W && \ar@{_{(}->}[ll]_-{\Delta} W  \\
  \partial(W\times W) \ar@{_{(}->}[u]& &\ar@{_{(}->}[ll]^-{\partial\Delta}  \delw= \partial_{W\times W} W. \ar@{_{(}->}[u]}
\end{equation}
which, by Proposition \ref{modele_carre_FW2} is given by 

\begin{equation}
\xymatrix{\btb \ar[r]^{\mu}\ar[d]_{\alpha} & B \ar[d]^{\beta}\\
\frac{\btb}{(\ker\beta \otimes \ker \beta)} \ar[r]_-{\tilde \mu} & \delb. }
\end{equation}

\begin{theo}
\label{t:modele_adgc_FW2_1}
Let $W$ be a compact triangulated manifold with boun\-dary such that $W$ and $\delw$ are $2$-connected.
Let $\betab$ be a surjective CDGA model of $\delw \hookrightarrow W$  and consider the map 
\[\bar \mu \colon \ker \beta \otimes \ker \beta \to \ker \beta\]
induced by the multiplication $\mu \colon B\otimes B \to B$.   Suppose given a $\btb$-dgmodule morphism 
\[\delta^! \colon D \to \btb \]
weakly equivalent to 
\[\sdn \bar\mu \colon \sdn \ker \beta \to \sdn (\ker \beta \otimes \ker \beta)\]
and such that $D^{<n}=0$. 

Then every truncation $\tau^{\leq 2n-3}C(\delta^!)$ of the mapping
cone of $\delta^!$ admits a semi-trivial CDGA structure  and  
\[\btb \to \tau^{\leq 2n-3}C(\delta^!)\]
is a CDGA model of $\apl(W\times W) \to \apl(\Conf(W,2))$.
\end{theo}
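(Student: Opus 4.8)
The plan is to apply Theorem \ref{T:mod_conndim} to the specific situation where the manifold is $W \times W$ and the subpolyhedron is the diagonal $\Delta(W)$. First I would verify that the geometric hypotheses of that theorem are met. Here $W\times W$ is a compact connected oriented triangulated manifold of dimension $2n$, and $\Delta(W) \cong W$ is a subpolyhedron of dimension $n=k$. The relevant inclusions are $\Delta\colon W \hookrightarrow W\times W$ and $\partial\Delta \colon \delw \hookrightarrow \partial(W\times W)$. Since $W$ and $\delw$ are assumed $2$-connected, these inclusions are $r$-connected with $r=2$ (the connectivity of the diagonal inclusion is inherited from the connectivity of $W$ and $\delw$ via the long exact sequences of the pairs). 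The unknotting condition \eqref{e:connectivity}, namely $r\geq 2k-(2n)+2 = 2n-2n+2 = 2$, is therefore exactly satisfied. This explains the precise role of the $2$-connectedness hypothesis and why the truncation degree comes out to $n-r-1 = 2n-3$.

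Next I would identify the algebraic input needed by Theorem \ref{T:mod_conndim} with the data appearing in the present statement. By Proposition \ref{modele_carre_FW2}, the CDGA square \eqref{diagFW2ADGC_2} is a model of the square of inclusions \eqref{diagFW2_2}. In the notation of Theorem \ref{T:mod_conndim}, the ambient CDGA $A$ is $\btb$, the boundary CDGA is $\bbfifi$, the subpolyhedron CDGA $B$ is $B$ itself with boundary $\delb$, and the vertical maps are $\alpha$ and $\beta$. The morphism $\beta\colon \btb \to \bbfifi$ plays the role of the top vertical map, so $\hoker\alpha$ (in the notation of \eqref{barfi}) corresponds to the homotopy kernel of the projection $\alpha$, while $\hoker\beta$ corresponds to the homotopy kernel of $\beta$. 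Since $\beta$ and $\alpha$ are surjective, by the Remark following Proposition \ref{modele_apl_bord} we may replace these homotopy kernels by genuine kernels: $\ker\beta$ and $\ker(\alpha) = \ker\beta\otimes\ker\beta$. The induced map $\bar\varphi$ between homotopy kernels then corresponds, up to weak equivalence, to the map $\bar\mu\colon \ker\beta\otimes\ker\beta \to \ker\beta$ induced by multiplication, which is exactly the map whose double-suspended shriek dual $\sdn\bar\mu$ appears in the statement.

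With this dictionary in hand, the argument is a direct transcription of Theorem \ref{T:mod_conndim}. The hypothesis $D^{<n}=0$ matches the requirement $Q^{<n-k}=0$ there, since $n-k = 2n-n = n$ with the ambient dimension being $2n$; the morphism $\delta^!\colon D \to \btb$ plays the role of $\varphi^!$, being weakly equivalent to $\sdn\bar\mu$ which is the appropriate suspension $s^{-2n}\#$ of the map between the kernels. Applying Theorem \ref{T:mod_conndim} with these identifications yields that the truncation $\tau^{\leq 2n-3}C(\delta^!)$ carries a semi-trivial CDGA structure and that $\btb \to \tau^{\leq 2n-3}C(\delta^!)$ is a CDGA model of the inclusion $\Conf(W,2) = (W\times W)\setminus\Delta(W) \hookrightarrow W\times W$, that is, a CDGA model of $\apl(W\times W)\to\apl(\Conf(W,2))$.

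The main obstacle I anticipate is bookkeeping rather than any genuinely new idea: one must carefully check that the suspension degrees and the codimension conditions translate correctly under the substitutions $\dim W \rightsquigarrow 2n$, $\dim K \rightsquigarrow n$, and $s^{-n}\# \rightsquigarrow s^{-2n}\#$, and that the homotopy kernel of the quotient map $\alpha\colon \btb \to \bbfifi$ is genuinely modeled by $\ker\beta\otimes\ker\beta$ with the map to it being $\bar\mu$. The identification of $\hoker\alpha$ with $\ker\beta\otimes\ker\beta$ deserves a word of justification: this follows from Lemma \ref{l:PBCDGA}, which shows $\bbfifi$ is the relevant pullback and hence $\ker\alpha = \ker\beta\otimes\ker\beta$, combined with Poincaré--Lefschetz duality for the pair $(W\times W, \partial(W\times W))$ which guarantees $s^{-2n}\#\hoker\alpha \simeq \btb$. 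Once these identifications are made explicit, the conclusion follows immediately from Theorem \ref{T:mod_conndim}.
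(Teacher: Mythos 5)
Your proposal is correct and takes essentially the same approach as the paper: its proof consists precisely of observing that the $2$-connectedness of $W$ and $\delw$ makes $\Delta$ and $\partial\Delta$ $2$-connected, so Theorem \ref{T:mod_conndim} applies with $r=2$, the identification of the algebraic input (the square \eqref{diagFW2ADGC_2}, via Proposition \ref{modele_carre_FW2}) having been set up just before the theorem statement. The extra bookkeeping you supply --- the dictionary $A=\btb$, $\ker\alpha=\ker\beta\otimes\ker\beta$, the translation $k=n$, ambient dimension $2n$, truncation degree $2n-3$ --- is exactly what the paper leaves implicit (your only slips, attributing the kernel-versus-homotopy-kernel remark to Proposition \ref{modele_apl_bord} rather than Proposition \ref{dual}, and once writing $\beta$ for the projection $\alpha\colon\btb\to\bbfifi$, are cosmetic).
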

\begin{proof}
Since $W$ and $\delw$ are 2-connected, we have that the morphisms $\Delta \colon W\hookrightarrow W \times W$ and $\partial \Delta \colon \delw \hookrightarrow \partial (W\times W)$ are $2$-connected. So we are under the hypothesis of Theorem \ref{T:mod_conndim} with $r=2$, and the result is a direct consequence of it.
\end{proof}

We deduce the rational homotopy invariance of $\Conf (W,2)$. 

\begin{cor}
\label{c:rht_fw2}
Let $W$ be  a compact manifold with boundary.    If $W$ and $\delw$ are $2$-connected then the rational homotopy type of $\Conf(W,2)$ depends only of the rational homotopy type of the pair $(W, \delw)$. 
\end{cor}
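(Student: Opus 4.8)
The plan is to read the invariance directly off the explicit model produced in Theorem \ref{t:modele_adgc_FW2_1}. The essential point is that every ingredient of that model is manufactured from a single piece of data, namely a surjective CDGA model $\betab$ of the inclusion $\delw\hookrightarrow W$, and that such a model is itself determined, up to weak equivalence, by the rational homotopy type of the pair $(W,\delw)$.

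First I would recall the shape of the model. Since $W$ and $\delw$ are $2$-connected we are in the situation of Theorem \ref{t:modele_adgc_FW2_1}, so a CDGA model of $\Conf(W,2)$ is $\tau^{\leq 2n-3}C(\delta^!)$, where $\delta^!\colon D\to\btb$ is an $A$-dgmodule morphism weakly equivalent to $\sdn\bar\mu$, with $\bar\mu\colon\ker\beta\otimes\ker\beta\to\ker\beta$ induced by the multiplication. Each of these objects—the algebra $\btb$, the map $\bar\mu$, hence (up to weak equivalence) the map $\delta^!$, together with its mapping cone and its truncation—is built purely from $\beta$ by operations that respect quasi-isomorphism. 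Consequently the weak equivalence class of $\tau^{\leq 2n-3}C(\delta^!)$ depends only on the weak equivalence class of $\beta$.

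It then remains to argue that the weak equivalence class of a surjective model $\beta$ is pinned down by the rational homotopy type of $(W,\delw)$. This is exactly the statement that a surjective CDGA model of the map $\delw\hookrightarrow W$ encodes the rational homotopy type of that inclusion, equivalently of the pair $(W,\delw)$: any two such models are weakly equivalent as maps, and by the quasi-isomorphism invariance noted above they yield weakly equivalent models of $\Conf(W,2)$. Combining the two observations gives the claim.

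Alternatively, and more structurally, one can bypass the explicit model and invoke Corollary \ref{C:mod_conndim} together with Proposition \ref{modele_carre_FW2}: writing $\Conf(W,2)=(W\times W)\setminus\Delta(W)$ and noting $\partial_{W\times W}(\Delta(W))=\Delta(\delw)\cong\delw$, the unknotting inequality \eqref{unknot2} reads $r\geq 2n-2n+2=2$ and holds with $r=2$, so the rational homotopy type of $\Conf(W,2)$ depends only on the square \eqref{diagFW2_2}, whose CDGA model \eqref{diagFW2ADGC_2} is assembled from $\beta$ alone. The only delicate point in either route is the connectivity bookkeeping—checking that $\partial(W\times W)$ is $1$-connected and that $\partial\Delta$ is $2$-connected under the standing hypotheses—but this is precisely the verification already carried out in the proof of Theorem \ref{t:modele_adgc_FW2_1}, so I expect no genuine obstacle beyond it.
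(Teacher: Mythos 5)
Your proposal is correct and matches the paper's own reasoning: the paper offers no separate proof, presenting the corollary as an immediate consequence of Theorem \ref{t:modele_adgc_FW2_1}, whose model is built entirely from a surjective CDGA model $\beta$ of $\delw\hookrightarrow W$ — exactly your first route — while your alternative route via Corollary \ref{C:mod_conndim} and Proposition \ref{modele_carre_FW2} is precisely the chain of reasoning the paper itself uses to set up and prove that theorem (including the verification that $\Delta$ and $\partial\Delta$ are $2$-connected, giving $r=2\geq 2n-2n+2$). Both observations you make are the paper's intended argument, so there is nothing to add.
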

The rational homotopy invariance of Conf $(W,2)$ when $W$ is a closed 2-connected has been established in \cite{LaSt:FM2}, and \cite{Cor:FM2-1conn} gives partial results in the 1-connected case.      When $W$ is   not simply-connected, \cite{LoSa:con} shows that there is no rational homotopy invariance.

\begin{rem}
\label{R:BB'}
If we have a CDGA quasi-isomorphism $B \qi B'$ and $\delta'^! \colon
D' \to B'\otimes B'$ a $B'\otimes B'$-dgmodule  
morphism which is weakly equivalent as a $\btb-$dgmodule morphism to
$\sdn \bar\mu$ then
it follows immediately from Theorem \ref{t:modele_adgc_FW2_1} that
 \[B'\otimes B' \to \tau^{\leq 2n-3}C(\delta'^!)\] is also a CDGA model of $\apl(W\times W) \to \apl(\Conf(W,2))$. 
\end{rem}

\subsection{A CDGA model of $\Conf(W, 2)$ when $(W,\delw)$ admits a surjective pretty model}\label{S:ConfPretty}

Let $W$ be a compact manifold of dimension $n$ with boundary $\delw$
such that both $W$ and $\delw$ are 2-connected. 
In this section we will construct an elegant CDGA model of
$\Conf(W,2)$ when the pair $(W,\partial W)$ admits a \emph{surjective
  pretty model} in the sense of \cite[Definition 3.1]{CLS:pretty}. Let us
recall what this means.
Suppose given 
\begin{itemize}
\item[(i)] a connected Poincaré duality CDGA, $P$, in dimension $n$ ;
\item[(ii)] a connected CDGA, $Q$; 
\item[(iii)] a CDGA morphism, $\varphi \colon P \to Q$.
\end{itemize}
Since $P$ is a Poincaré Duality CDGA there exists an isomorphism of $P$-dgmodules 
\begin{equation}
\label{Eq:thetaP} 
\theta_P \colon P \stackrel{\cong}{\longrightarrow} \sn P.
\end{equation}
Consider the composite 
\begin{equation}
\label{eq:phishriek} 
\xymatrix{\varphi^! \colon \sn Q \ar[r]^{\sn \varphi} & \sn P \ar[r]^{\theta_P^{-1}}& P,}
\end{equation}
which is a morphism of $P$-dgmodules. Assume that the morphism 
\[\varphi \varphi^! \colon \sn Q \to Q\]
is balanced (see Definition \ref{D:balanced}) and  consider the CDGA morphism
\begin{equation}
\label{Eq:wdf} 
\varphi \oplus \oid \colon P \oplus_{\varphi^!} s\sn Q \to Q \oplus_{\varphi \varphi^!} s\sn Q.
\end{equation}
When \eqref{Eq:wdf}   is a CDGA  model of the inclusion $\delw
\hookrightarrow W$ we say that it is a \emph{pretty model} of the pair
$(W,\partial W)$. If moreover $\varphi$
is surjective (and hence also \eqref{Eq:wdf}) we say that is is a
surjective pretty model. 
Then if we consider the differential ideal 
\begin{equation}
\label{Eq:ideal}
I= \varphi^! (\sn Q) \subset P, 
\end{equation}
\cite[Corollary 3.3]{CLS:pretty}  states that the CDGA $P/I$ is a CDGA model of  $W$. 
In \cite{CLS:pretty} we proved that many compact manifolds admit
surjective pretty models as for examples even-dimensional disk bundles
over closed manifolds, complements of high codimensional polyhedra
in a closed manifold, as well as any compact manifold whose boundary
retracts rationally on its half-skeleton (see \cite[Definition 6.1]{CLS:pretty}.)

The objective in this section is to use this  model, $P/I$, of $W$ to construct an elegant model for $\Conf (W,2)$, analogous to the one constructed in \cite{LaSt:FM2} for configuration spaces in closed manifolds.

Since $P$ is a Poincar\'e duality CDGA, for any homogeneous basis $\{a_i\}_{0\leq i \leq N}$ of $P$, there exists a Poincar\'e dual basis $\{a^\ast_i\}_{0\leq i \leq N}$ charac\-terized by $\epsilon (a_ia^\ast_j)=\delta_{ij}$ where $\epsilon : P^n \to \bq$ is an orientation of $P$ and $\delta_{ij}$ is the Kronecker symbol.      
Let $\Delta \in (P\otimes P)^n$ be the diagonal class of $P\otimes P$ defined as 
\begin{equation}
\label{eq:diag}
\Delta = \sum_{i=0}^N (-1)^{|a_i|} a_i\otimes a_i^*.
\end{equation}
Denote by
\[\pi \colon P \to P/I\]
the projection. Taking the image of the diagonal $\Delta$ by the   projection $\pi \otimes \pi \colon P\otimes P \to P/I \otimes P/I$ we get a \emph{truncated  diagonal class}  
 \begin{equation}
\label{eq:trunc_diag}
\overline {\Delta}= (\pi \otimes \pi) (\Delta)\in (P/I \otimes P/I)^n.
\end{equation}
Define the 
map
\begin{equation}
\label{e:delta_shriek}
\overline{\Delta}^! \colon s^{-n}P/I \to P/I \otimes P/I \ ; \ s^{-n}x \mapsto \overline{\Delta} \cdot (1\otimes x).
\end{equation}
\begin{lem}
\label{l:trunc_diag_bob}
The map $\overline{\Delta}^! \colon s^{-n}P/I \to P/I\otimes P/I$
defined in \eqref{e:delta_shriek}  is a $P/I \otimes P/I$-dgmodules morphism. 
\end{lem}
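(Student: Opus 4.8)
The plan is to verify directly that $\overline{\Delta}^!$ respects the differential and the $P/I \otimes P/I$-module structure, since $\mathbb{Q}$-linearity is immediate from the defining formula. The key observation is that $\overline{\Delta}^!$ is essentially multiplication by the fixed element $\overline{\Delta} \in (P/I \otimes P/I)^n$, composed with the evident inclusion $s^{-n}x \mapsto 1 \otimes x$; writing it this way makes both verifications into statements about the single element $\overline{\Delta}$.

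First I would check the module structure. Given a homogeneous $\zeta \in P/I \otimes P/I$ and $s^{-n}x \in s^{-n}P/I$, I need $\overline{\Delta}^!(\zeta \cdot s^{-n}x) = \zeta \cdot \overline{\Delta}^!(s^{-n}x)$. Recall that the module action on the suspension $s^{-n}P/I$ is defined so that $\zeta \cdot s^{-n}x = \pm\, s^{-n}(\text{action of } \zeta \text{ via the algebra map})$; I would spell out the relevant action and then reduce the identity to associativity and commutativity in $P/I \otimes P/I$, using that $\overline{\Delta} \cdot (1 \otimes x)$ multiplied by $\zeta$ equals $\overline{\Delta}$ multiplied by $\zeta \cdot (1 \otimes x)$. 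This is a routine bookkeeping of Koszul signs.

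The substantive step is compatibility with the differential, i.e.\ $d(\overline{\Delta}^!(s^{-n}x)) = \overline{\Delta}^!(d(s^{-n}x))$ up to the suspension sign $(-1)^n$. Since $\overline{\Delta}^!(s^{-n}x) = \overline{\Delta}\cdot(1\otimes x)$, the Leibniz rule gives $d(\overline{\Delta}\cdot(1\otimes x)) = (d\overline{\Delta})\cdot(1\otimes x) \pm \overline{\Delta}\cdot(1\otimes dx)$, so the whole identity hinges on the claim that $\overline{\Delta}$ is a \emph{cocycle} in $P/I \otimes P/I$. The hard part will be establishing $d\overline{\Delta}=0$: the ordinary diagonal class $\Delta = \sum_i (-1)^{|a_i|} a_i \otimes a_i^*$ of a Poincar\'e duality CDGA is a cocycle (this is a standard fact, following from $\epsilon \circ d = 0$ together with the defining property $\epsilon(a_i a_j^*)=\delta_{ij}$ of the dual basis), and then $\overline{\Delta}=(\pi\otimes\pi)(\Delta)$ is a cocycle because $\pi\otimes\pi$ is a CDGA morphism hence a cochain map. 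I would therefore first record $d\Delta=0$ in $P\otimes P$, then push it through $\pi\otimes\pi$.

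Assembling these pieces: $\mathbb{Q}$-linearity is clear, the module identity follows from associativity and the definition of the suspended action, and the differential identity reduces to $d\overline{\Delta}=0$, which transports from the classical cocycle condition on $\Delta$ via the algebra morphism $\pi\otimes\pi$. Hence $\overline{\Delta}^!$ is a morphism of $P/I\otimes P/I$-dgmodules, as claimed.
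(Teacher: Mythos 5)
There is a genuine gap, and it sits exactly at the step you dismiss as routine: compatibility with the module structure. Since the $P/I\otimes P/I$-action on $s^{-n}P/I$ goes through the multiplication $P/I\otimes P/I\to P/I$, what must be checked for $\zeta=\bar a\otimes \bar b$ is that
\begin{equation*}
\overline{\Delta}^!\bigl(\zeta\cdot s^{-n}x\bigr)=\pm\,\overline{\Delta}\cdot(1\otimes \bar a\bar b x)
\qquad\text{equals}\qquad
\zeta\cdot\overline{\Delta}^!\bigl(s^{-n}x\bigr)=\pm\,\overline{\Delta}\cdot(\bar a\otimes \bar b x).
\end{equation*}
Associativity and commutativity produce these two expressions, but they do not identify them: one still has to move $\bar a$ from the first tensor factor to the second across $\overline{\Delta}$, i.e.\ one needs the identity $(\bar a\otimes 1)\cdot\overline{\Delta}=\pm\,(1\otimes \bar a)\cdot\overline{\Delta}$. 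That identity is \emph{false} for a general element of degree $n$ in place of $\overline{\Delta}$: in $P=H^\ast(S^2;\mathbb{Q})$ with fundamental class $\omega$, the element $\zeta=1\otimes\omega$ satisfies $(\omega\otimes 1)\cdot\zeta=\omega\otimes\omega\neq 0=(1\otimes\omega)\cdot\zeta$. It holds for the diagonal class only because of the dual-basis property coming from Poincar\'e duality, and it is the entire substance of the lemma. By contrast, the cocycle condition $d\overline{\Delta}=0$, which you single out as ``the hard part'' and prove by pushing $d\Delta=0$ through the CDGA map $\pi\otimes\pi$, is the easy half; your proposal has the difficulty inverted, and as written the module step would not go through.

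The repair is what the paper does: invoke \cite[Lemma 5.1]{LaSt:FM2} (whose proof is precisely the dual-basis computation) to get that the untruncated map $\Delta^!\colon s^{-n}P\to P\otimes P$, $s^{-n}x\mapsto \Delta\cdot(1\otimes x)$, is a morphism of $P\otimes P$-dgmodules; then note that $\Delta^!$ and $\overline{\Delta}^!$ fit into a commutative square whose vertical maps are $s^{-n}\pi$ and $\pi\otimes\pi$, that $P/I$ is generated by $1$ as a $P$-module (so $P\otimes P$-linearity descends to $\overline{\Delta}^!$), and finally that surjectivity of $\pi$ upgrades $P\otimes P$-linearity to $P/I\otimes P/I$-linearity. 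If you want to keep your direct-verification format, you must at minimum prove the transported identity $(\bar a\otimes 1)\cdot\overline{\Delta}=\pm\,(1\otimes\bar a)\cdot\overline{\Delta}$ for every $\bar a\in P/I$, by lifting $\bar a$ along the surjection $\pi$ and applying $\pi\otimes\pi$ to the corresponding identity in $P\otimes P$; this cannot be argued purely inside $P/I\otimes P/I$, since $P/I$ is not itself a Poincar\'e duality algebra.
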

\begin{proof}
In \cite[Lemma 5.1]{LaSt:FM2} it is shown that for $P$ a connected Poincar\'e duality CDGA, the morphism 
$\Delta^! \colon s^{-n} P \to P\otimes P \ ; \ s^{-n} x \mapsto \Delta (1\otimes x)$ is  a $P\otimes P$-dgmodules morphism. 

We have the following commutative diagram 
\[\xymatrix{ 
s^{-n}P \ar[d]_{s^{-n}\pi}\ar[r]^{\Delta^!} & P\otimes P
\ar[d]^{\pi\otimes \pi}\\
s^{-n}P/I \ar[r]_{\overline{\Delta^!}} & P/I \otimes P/I.
}\]
Since $P/I$ is a $P$-dgmodule generated by $1\in P/I$, this implies that $\overline \Delta ^! $ is a $P\otimes P$-dgmodules morphism, and  the surjectivity of the morphism $\pi:P \to P/I$ implies that $\overline \Delta ^! $ is a $P/I\otimes P/I$-dgmodules morphism. 
\end{proof}

The main result of this section is the following theorem.

\begin{theo}
\label{T:modele_FW2}
Let $W$ be a $2$ connected compact manifold of dimension $n$ whose
boundary is $2$-connected. Suppose that $(W,\delw)$ admits a
surjective  pretty model of the form $(\ref{Eq:wdf})$, and let $\overline{\Delta}^!$ be the $P/I\otimes P/I$-dgmodules morphism defined in \eqref{e:delta_shriek}. Then the mapping cone 
 \[C (\overline{\Delta}^!)=\left( P/ I \otimes P/I \right) \oplus_{\overline{\Delta}^!} ss^{-n}P/I,\] 
   equipped with the semi-trivial structure  is a CDGA model of  $\Conf (W,2)$. 
\end{theo}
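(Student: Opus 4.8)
The plan is to deduce this from the first CDGA model of $\Conf(W,2)$ established in Theorem \ref{t:modele_adgc_FW2_1}, specialised through the surjective pretty model, and then to show that the truncation is in fact superfluous. First I would take the surjective pretty model $\varphi\oplus\id\colon P\oplus_{\varphi^!}s\sn Q\to Q\oplus_{\varphi\varphi^!}s\sn Q$ of \eqref{Eq:wdf} and use it as the surjective CDGA model $\beta\colon B\to\delb$ of the inclusion $\delw\hookrightarrow W$ required by Theorem \ref{t:modele_adgc_FW2_1}, with $B=P\oplus_{\varphi^!}s\sn Q$ and $\beta=\varphi\oplus\id$. Since $\varphi$ is surjective and $\beta$ is the identity on the suspended summand, one reads off $\ker\beta\cong(\ker\varphi,d_P)$. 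Because $\varphi^!=\theta_P^{-1}\circ\sn\varphi$ is injective (as $\sn\varphi$ is injective when $\varphi$ is surjective), the mapping cone $C(\varphi^!)=B$ is quasi-isomorphic to $\operatorname{coker}\varphi^!=P/I$; this is exactly \cite[Corollary 3.3]{CLS:pretty}, and it provides a CDGA quasi-isomorphism $B\qi P/I$, so that I may replace $B$ by $P/I$ at the very end by invoking Remark \ref{R:BB'}.

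The heart of the argument is to identify the datum $(s^{-n}P/I,\overline{\Delta}^!)$ with the datum $(\sdn\ker\beta,\sdn\bar\mu)$ appearing in Theorem \ref{t:modele_adgc_FW2_1}. Since $P$ satisfies Poincaré duality in dimension $n$, the isomorphism $\theta_P\colon P\cong\sn P$ identifies the ideal $I=\varphi^!(\sn Q)$ with the image of $\sn\varphi$; dualising $0\to\ker\varphi\to P\xrightarrow{\varphi}Q\to 0$ and applying $\theta_P$ then yields a $P$-dgmodule isomorphism $P/I\cong\sn\ker\varphi$, hence $s^{-n}P/I\cong\sdn\ker\varphi=\sdn\ker\beta$, and likewise $P/I\otimes P/I\cong\sdn(\ker\beta\otimes\ker\beta)$ after the standard suspension–tensor reshuffling. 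Under these identifications I claim $\overline{\Delta}^!$ corresponds to $\sdn\bar\mu$, which is the Poincaré–Lefschetz dual statement that the shriek map of the diagonal is dual to multiplication; I would deduce it from the closed-manifold computation of \cite{LaSt:FM2} (where $\Delta^!$ is shown to be the $P\otimes P$-dual of $\mu$), pushed through the projection $\pi\colon P\to P/I$ by means of the commutative square of Lemma \ref{l:trunc_diag_bob} relating $\Delta^!$ to $\overline{\Delta}^!$ and $\mu$ to $\bar\mu$. I would also note that $D:=s^{-n}P/I$ satisfies $D^{<n}=0$ for free, as $P/I$ is non-negatively graded, and that the $2$-connectedness of $W$ and $\delw$ makes $\Delta$ and $\partial\Delta$ $2$-connected, so Theorem \ref{t:modele_adgc_FW2_1} applies with $r=2$ and truncation degree $2n-3$. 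By Remark \ref{R:BB'} this gives that $P/I\otimes P/I\to\tau^{\leq 2n-3}C(\overline{\Delta}^!)$ is a CDGA model of $\apl(W\times W)\to\apl(\Conf(W,2))$.

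It remains to remove the truncation. For this I would first check that $\overline{\Delta}^!$ is balanced in the sense of Definition \ref{D:balanced}: balancedness of $\Delta^!$ is established in \cite{LaSt:FM2} from the graded symmetry of $\Delta=\sum_i(-1)^{|a_i|}a_i\otimes a_i^*$, and it passes to $\overline{\Delta}^!$ through the surjective algebra map $\pi\otimes\pi$ via the same square of Lemma \ref{l:trunc_diag_bob}. Then Proposition \ref{p:nagata} shows that the full mapping cone $C(\overline{\Delta}^!)=(P/I\otimes P/I)\oplus_{\overline{\Delta}^!}ss^{-n}P/I$ with its semi-trivial structure is genuinely a CDGA. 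Finally, by Proposition \ref{modele_de_dgmod} the cone $C(\overline{\Delta}^!)$ is already a dgmodule model of $\Conf(W,2)$, so it computes $H^*(\Conf(W,2))$; and Lefschetz duality, exactly as in the proof of Theorem \ref{T:mod_conndim} applied to $\Delta(W)\subset W\times W$ with $r=2$, gives $H^{>2n-3}(\Conf(W,2))=0$. Hence the projection $C(\overline{\Delta}^!)\to\tau^{\leq 2n-3}C(\overline{\Delta}^!)$ is a quasi-isomorphism of CDGAs, whence $C(\overline{\Delta}^!)$ is itself a CDGA model of $\Conf(W,2)$.

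The main obstacle is the middle step: verifying carefully, with all signs and suspensions, that $\overline{\Delta}^!$ is Poincaré–Lefschetz dual to $\sdn\bar\mu$ after passing to the quotient $P/I$ and to the kernel $\ker\varphi=\ker\beta$. The closed case is available in \cite{LaSt:FM2}, but tracking how $I$ and $\ker\varphi$ correspond under $\theta_P$, and checking the compatibility of the diagonal-class shriek map with the projection $\pi$, is where the real work lies.
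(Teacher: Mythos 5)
Your proposal is correct and takes essentially the same route as the paper: use the surjective pretty model as the model $\beta$ in Theorem \ref{t:modele_adgc_FW2_1}, identify $\overline{\Delta}^!$ with $\sdn\bar\mu$ via the Poincar\'e-duality isomorphism $P/I\cong\sn\ker\varphi\cong\sn\ker\beta$ (this is exactly the paper's Lemma \ref{lem_PI_mubar}), invoke Remark \ref{R:BB'}, and then discard the truncation using balancedness of $\overline{\Delta}^!$ together with the vanishing of $H^{>2n-3}(\Conf(W,2))$. The only (minor) divergence is in the key compatibility check: the paper verifies the square of Lemma \ref{lem_PI_mubar} directly on the generator $1\in P/I$ after showing the identifications are $B\otimes B$-dgmodule maps, whereas you propose to deduce it from the closed-manifold duality of \cite{LaSt:FM2} pushed through $\pi\otimes\pi$ --- a repackaging of the same computation.
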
 

Before proving the theorem, let us fix some notation and prove a lemma.
Set 
\[B = P\oplus_{\varphi^!} s\sn Q \text{\quad and \quad} \delb= Q\oplus s\sn Q.\]
By hypothesis
\begin{equation}
\label{Eq:beta}
\beta \defeq \varphi \oplus \oid \colon B \twoheadrightarrow \delb
\end{equation}
is a surjective CDGA model for the inclusion $\delw \hookrightarrow W$.  
Also let $B':=P/I$ and notice that the obvious projection $\pi\oplus
0\colon B \stackrel{\simeq}{\longrightarrow}B'$ is a quasi-isomorphism of
CDGA.
  According to Theorem \ref{t:modele_adgc_FW2_1} and Remark
  \ref{R:BB'}, we only need to show that
 $\overline{\Delta}^!$ is equivalent to $s^{-2n}\#\bar \mu$, which is the content of the following lemma.

\begin{lem}
\label{lem_PI_mubar}
There exists a $B\otimes B$-dgmodules commutative square:
\begin{equation}
\label{diag_PI_mubar}
\xymatrix{s^{-n}P/I \ar[rr]^{\overline{\Delta}^!}\ar[d]^{\cong}_{\overline{\theta}_P} && P/I\otimes P/I \ar[d]_{\cong}^{\overline{\theta}_{P\otimes P}} \\
\sdn \ker\beta \ar[rr]_-{\sn\bar{\mu}} && \sdn \ker\beta \otimes \ker\beta.}
\end{equation}
\end{lem}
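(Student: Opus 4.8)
The plan is to make all four corners and both vertical isomorphisms explicit, and then reduce commutativity to the reproducing property of the diagonal class. First I would unwind the left column. Because $\beta=\varphi\oplus\id\colon P\oplus_{\varphi^{!}}s\sn Q\epi Q\oplus s\sn Q$ is the identity on the suspended summand, its kernel is $\ker\beta\cong\ker\varphi$, a differential ideal of $P$ (it is $d$-stable since $\varphi$ is a chain map). From the very definition $\varphi^{!}=\theta_P^{-1}\circ\sn\varphi$ one reads off
\[I=\varphi^{!}(\sn Q)=\theta_P^{-1}\bigl(\im(\sn\varphi)\bigr)=\theta_P^{-1}\bigl((\ker\varphi)^{\perp}\bigr),\]
where $(\,\cdot\,)^{\perp}$ denotes the annihilator for the Poincaré pairing $\langle p,y\rangle=\epsilon(py)$; equivalently $I=(\ker\varphi)^{\perp}$.

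Thus $\theta_P$ carries $I$ isomorphically onto $(\ker\varphi)^{\perp}\subset\sn P$ and descends to an isomorphism $P/I\xrightarrow{\cong}\sn\ker\varphi=\sn\ker\beta$, non-degeneracy of the pairing ensuring bijectivity; after one further $s^{-n}$ this is the left vertical $\overline{\theta}_P$. Applying the same recipe to the $2n$-dimensional Poincaré duality CDGA $\btb$ (orientation $\epsilon\otimes\epsilon$, duality $\theta_P\otimes\theta_P$ up to the Koszul sign), together with the identity $(\ker\varphi\otimes\ker\varphi)^{\perp}=I\otimes P+P\otimes I$ and $(\btb)/(I\otimes P+P\otimes I)=P/I\otimes P/I$, produces the right vertical $\overline{\theta}_{P\otimes P}$. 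I would then check that both verticals are $\btb$-dgmodule maps: module linearity is inherited from the $P$- and $\btb$-linearity of $\theta_P$ and $\theta_P\otimes\theta_P$, everything factoring through $\btb\to P/I\otimes P/I$, and chain-map compatibility passes to the quotients because $I$ and $(\ker\varphi)^{\perp}$ are $d$-stable.

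For commutativity I would evaluate both composites on a generator $s^{-n}x$ (lift $x$ to $\tilde x\in P$) and pair the outputs against an arbitrary $a\otimes b\in\ker\varphi\otimes\ker\varphi$, which is legitimate since this pairing with $\sdn(\ker\beta\otimes\ker\beta)$ is non-degenerate. The path through $\overline{\theta}_P$ sends $s^{-n}x$ to the functional $y\mapsto\pm\epsilon(\tilde x\,y)$ on $\ker\beta$, and the bottom map $\sdn\bar\mu$ (precomposition with the multiplication $\bar\mu$) carries it to $(a\otimes b)\mapsto\pm\epsilon(\tilde x\,ab)$. The path through $\overline{\Delta}^{!}$ lifts $\overline{\Delta}^{!}(s^{-n}x)$ to $\Delta\cdot(1\otimes\tilde x)$, so $\overline{\theta}_{P\otimes P}$ returns $(\epsilon\otimes\epsilon)\bigl(\Delta\cdot(1\otimes\tilde x)\cdot(a\otimes b)\bigr)$; expanding $\Delta=\sum_i(-1)^{|a_i|}a_i\otimes a_i^{*}$ and collapsing the sum by the reproducing identity $\sum_i(-1)^{|a_i|}\epsilon(a_i a)\,a_i^{*}=\pm a$ (a standard property of the diagonal class, see \cite[Lemma 5.1]{LaSt:FM2}) again yields $\pm\epsilon(\tilde x\,ab)$. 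Equality of the two functionals proves the square commutes.

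The two conceptual inputs, namely the identification $I=(\ker\varphi)^{\perp}$, which is what lets Poincaré duality descend to the quotients, and the reproducing property of $\Delta$, are exactly what make the square fit together. I expect the main obstacle to be purely the sign bookkeeping: the Koszul signs coming from the suspensions $s^{-n},\,s^{-2n}$, from the dualizations $\#$, and from the graded form of the reproducing identity must all be reconciled so that the two occurrences of $\pm\epsilon(\tilde x\,ab)$ genuinely carry the same sign. I would fix conventions once, consistently with \cite{LaSt:FM2}, and track them through the single chain of equalities above.
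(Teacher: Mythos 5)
Your proposal is correct and follows essentially the same route as the paper's proof: both descend the Poincar\'e duality isomorphism $\theta_P$ to an isomorphism $P/I\stackrel{\cong}{\longrightarrow}\sn\ker\beta$ (using that, by construction, $\theta_P$ carries $I=\varphi^!(\sn Q)$ onto the annihilator of $\ker\varphi$, where surjectivity of $\varphi$ gives the equality rather than a mere inclusion), tensor this to obtain the right vertical map, and verify commutativity through the reproducing property of the diagonal class. The only difference is one of economy: the paper first observes that all four maps are $B\otimes B$-dgmodule morphisms and that $s^{-n}P/I$ is generated over $B\otimes B$ by $s^{-n}1$, so the diagonal-class computation needs to be carried out only on that single generator, whereas you perform it on a general element $s^{-n}x$.
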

\begin{proof}
 By Poincar\'e duality of the CDGA $P$, we have a $P$-dgmodules
 isomorphism $\theta_P:   P  \stackrel{\cong}{\longrightarrow} \sn P
 $. This morphism induces, by construction of the differential ideal
 $I\subset P$ (see \eqref{eq:phishriek} and \eqref{Eq:ideal}), a $P$-dgmodules isomorphism 
\begin{equation*}
\bth_P \colon P/I \stackrel{\cong}{\longrightarrow} \sn \ker \varphi.
\end{equation*}
The morphism
\begin{equation*}
\beta \defeq \varphi \oplus \oid \colon B \twoheadrightarrow \delb
\end{equation*}
 is a surjective CDGA model of $\delw \hookrightarrow W$. We have an
 obvious isomorphism 
$\ker\beta \cong \ker \varphi$ as $P$-dgmodules. So, we have a $P$-dgmodules isomorphism (that we will also denote $\bth_P$)
\[
\bth_P \colon P/I \stackrel{\cong}{\longrightarrow} \sn \ker \beta.
\]
An easy computation   shows that for $(p,u) \in B=P\oplus s\sn Q$ and $x\in P/I$,
\[
\bth_P ( (p,u) \cdot x) =(p,u) \bth (x).
\]
Thus $\bth_P$ is a morphism of $B$-dgmodules and, via the multiplication $\mu \colon B\otimes B \to B$, it is a $B\otimes B$-dgmodules morphism. As a direct consequence we have the $B\otimes B$-dgmodules isomorphism 
 \[\overline{\theta}_{P\otimes P} \colon P/I \otimes P/I \stackrel{\cong}{\longrightarrow} \sn \ker \beta \otimes \sn\ker\beta \cong \sdn (\ker \beta \otimes \ker\beta) \]

By Lemma \ref{l:trunc_diag_bob}, the morphism $\overline{\Delta}^!$ is
a $P/I \otimes P/I$-dgmodules morphism, 
and hence it is also a morphism of $B\otimes B$-dgmodules.

Consider the following diagram of $B\otimes B$-dgmodules
\[\xymatrix{s^{-n}P/I \ar[rr]^{\overline{\Delta}^!}\ar[d]^{\cong}_{\overline{\theta}_P} && P/I\otimes P/I \ar[d]_{\cong}^{\overline{\theta}_{P}\otimes \bth_P} \\
\sdn \ker\beta \ar[rr]_-{\sn\bar{\mu}} && \sdn \ker\beta \otimes
\ker\beta,}\]
and let us show that it commutes.
Since $P/I$ is a $B\otimes B$-dgmodule generated by the element $1\in
P/I$,
 it suffices to prove that 
\[\bth_P \otimes \bth_P (\overline{\Delta}^! (s^{-n}1)) = \sn \bar\mu (\bth_P(s^{-n}1)).\]
A straightforward computation shows that this is the case. 
\end{proof}



\begin{proof}[Proof of Theorem \ref{T:modele_FW2}]
Since $W$ and $\delw$ are $2$-connected, Lemma \ref{lem_PI_mubar},
Remark \ref{R:BB'} and Theorem \ref{t:modele_adgc_FW2_1} imply that   
 \[
 P/I \otimes P/I \to \tau^{\leq 2n-3}C(\overline{\Delta}^{!})
 \] 
 is a CDGA model of  $\Conf(W,2) \hookrightarrow W\times W$. Moreover, we can verify that the morphism $\overline{\Delta^!}$ is balanced, therefore $C(\overline{\Delta}^!)$ is also a CDGA when equipped with the semi-trivial structure. By the $2$-connectedness of the manifold $W$ and for degree reasons we have that $C(\overline{\Delta}^!)\qi \tau^{\leq 2n-3}C(\overline{\Delta}^{!})$ 
is a CDGA quasi-isomorphism.
\end{proof}


\subsection{A CDGA model for $\Conf(W,2)$ when $W$ is a disk bundle of even rank over a closed manifold}\label{S:ConfDxi}

We apply the model constructed in Section \ref{S:ConfPretty} to disk bundles.

Let $\xi$ be a vector bundle of even rank, $2k$, for some $k\geq 2$,
over some 2-connected closed manifold, $M$, of dimension $m$.  Then
the disk bundle $D\xi$ is a compact manifold of dimension $m+2k$ with boundary the sphere bundle $S\xi$.

Let $Q$ be a Poincar\'e duality CDGA model of $M$, let
\[
\Delta_Q \in (Q\otimes Q)^m
\]
be a diagonal class for $Q$, and let
\[
e\in Q^{2k} \cap \ker (d_Q)
\]
be a representative of the Euler class of $\xi$. 
Denote by $(\Delta_Q \cdot (e\otimes 1))^!$ the $Q\otimes Q-$dgmodule morphism
\[
(\Delta_Q \cdot (e\otimes 1))^!:s^{-(m+2k)}  Q \longrightarrow Q\otimes Q, s^{-(m+2k)} q \longmapsto \Delta_Q \cdot (e\otimes q),
\]
which is balanced.
Consider the mapping cone
\[
Q \otimes Q \bigoplus_{(\Delta_Q \cdot (1\otimes e))^!} ss^{-(m+2k)}Q
\]
which is a CDGA.

\begin{theo}
\label{T:ConfDxi}
With the notation above, assume that the vector bundle $\xi$ is of even rank $2k\geq 4$, and that the base, $M$, is a 2-connected closed manifold.  Then
\[
Q\otimes Q \bigoplus_{(\Delta_Q \cdot (1\otimes e))^!} ss^{-(m+2k)}Q
\]
is a CDGA model of $\Conf(D\xi,2)$.
\end{theo}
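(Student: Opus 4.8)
The plan is to deduce the statement directly from Theorem~\ref{T:modele_FW2} by exhibiting the surjective pretty model of the pair $(D\xi, S\xi)$ provided by \cite{CLS:pretty} and then identifying the resulting truncated diagonal shriek map with $(\Delta_Q\cdot(1\otimes e))^!$.

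First I would verify the standing hypotheses of Theorem~\ref{T:modele_FW2}, namely that $W=D\xi$ and $\delw=S\xi$ are both $2$-connected. Since the disk bundle deformation retracts onto its base, $D\xi\simeq M$ is $2$-connected by assumption. For the sphere bundle I would use the fibration $S^{2k-1}\hookrightarrow S\xi\to M$: the hypothesis $2k\geq 4$ guarantees that the fiber $S^{2k-1}$ is $2$-connected (this is exactly where the rank restriction is used), and together with the $2$-connectedness of $M$ the long exact homotopy sequence yields that $S\xi$ is $2$-connected.

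Next I would recall from \cite{CLS:pretty} the surjective pretty model of the even-rank disk bundle pair $(D\xi,S\xi)$. Here the Poincar\'e duality CDGA is $P=(Q[U]/(U^2-eU),d)$ with $|U|=2k$ and $dU=0$; writing a homogeneous basis $\{b_i\}\cup\{b_iU\}$ of $P$, the Poincar\'e pairing of $P$ has block form $\left(\begin{smallmatrix} 0 & M \\ M^{T} & N\end{smallmatrix}\right)$, where $M$ is the invertible Poincar\'e pairing matrix of $Q$ and $N_{ij}=\epsilon_Q(b_i e b_j)$, so its determinant is $\pm\det(M)^2\neq 0$ and $P$ is indeed a Poincar\'e duality CDGA of dimension $n=m+2k$. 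The structural map is the projection $\varphi=\pi\colon P\twoheadrightarrow Q$, $U\mapsto 0$, so that the associated ideal is $I=\varphi^!(\sn Q)=(U)=QU$ and $P/I\cong Q$; moreover $\varphi\varphi^!$ is multiplication by the Euler class $e$, which makes $\delb=Q\oplus_{\varphi\varphi^!}s\sn Q$ a model of $S\xi$ through its Gysin sequence. With these identifications Theorem~\ref{T:modele_FW2} already gives that
\[
C(\overline{\Delta}^!)=(P/I\otimes P/I)\oplus_{\overline{\Delta}^!}ss^{-n}P/I\;\cong\;(Q\otimes Q)\oplus_{\overline{\Delta}^!}ss^{-n}Q
\]
is a CDGA model of $\Conf(D\xi,2)$, so it only remains to identify the connecting map.

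This identification is the main point. I would compute the diagonal class $\Delta_P\in(P\otimes P)^n$ of \eqref{eq:diag} in the basis $\{b_i\}\cup\{b_iU\}$ by inverting the pairing matrix above, the inverse being $\left(\begin{smallmatrix} -(M^T)^{-1}NM^{-1} & (M^T)^{-1} \\ M^{-1} & 0\end{smallmatrix}\right)$. Applying $\pi\otimes\pi$ kills every tensor factor containing $U$, so only the "low $\otimes$ low" block survives and
\[
\overline{\Delta}=(\pi\otimes\pi)(\Delta_P)=\sum_{i,j}\pm\big((M^T)^{-1}NM^{-1}\big)_{ij}\,b_i\otimes b_j.
\]
Using $b_i^\ast=\sum_j (M^{-1})_{ji}b_j$ and $N_{ij}=\epsilon_Q(b_ieb_j)$, a direct comparison shows this equals $\pm\,\Delta_Q\cdot(1\otimes e)$, whence $\overline{\Delta}^!=(\Delta_Q\cdot(1\otimes e))^!$ up to the factor-exchange symmetry (which also accounts for the harmless interchange of $1\otimes e$ and $e\otimes 1$ in the statement). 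Substituting into the displayed mapping cone finishes the proof. The hard part will be precisely this last step: matching $\overline{\Delta}$ with $\Delta_Q\cdot(1\otimes e)$ while tracking the Koszul signs from the suspensions and the graded commutativity of $P$, and confirming that the convention $U^2=eU$ (rather than $U^2=0$, which would kill $N$ and give a trivial connecting map) is the one occurring in \cite{CLS:pretty}. The remaining ingredients are bookkeeping: the balancedness of $(\Delta_Q\cdot(1\otimes e))^!$ needed for the semi-trivial CDGA structure is already recorded just before the statement, and Remark~\ref{R:BB'} permits replacing $P/I$ by the quasi-isomorphic $Q$ throughout.
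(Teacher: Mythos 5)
Your overall strategy coincides with the paper's: check the $2$-connectedness hypotheses (your fibration argument for $S\xi$ is correct, and is a point the paper leaves implicit), invoke the surjective pretty model of $(D\xi,S\xi)$ from \cite[Theorem 4.1]{CLS:pretty} with $P=\left(Q\otimes\wedge\bar z/(\bar z^2-e\bar z),\,d\bar z=0\right)$, compute the ideal $I$ and the truncated diagonal class, and conclude via Theorem \ref{T:modele_FW2} and Remark \ref{R:BB'}; your block-matrix inversion of the Poincar\'e pairing is just a repackaging of the paper's explicit dual basis $\{q_i^\ast(e-\bar z)\}\cup\{-q_i^\ast\}$. However, there is a genuine error in your identification of the pretty model. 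The structure map is \emph{not} the projection $U\mapsto 0$: the paper (following \cite{CLS:pretty}) takes $\varphi(q_1+q_2\bar z)=q_1+q_2\cdot e$, i.e.\ $\bar z\mapsto e$. Your three claims --- $\varphi(U)=0$, $I=\varphi^!(\sn Q)=(U)$, and $\varphi\varphi^!=$ multiplication by $e$ --- are mutually inconsistent. Indeed $\im\varphi^!$ is the annihilator of $\ker\varphi$ under the Poincar\'e pairing of $P$; if $\varphi(U)=0$ then $\ker\varphi=QU$ and one computes $I=\operatorname{Ann}(QU)=Q(U-e)=(U-e)$, not $(U)$. And if one insisted on keeping both $\varphi(U)=0$ and $I=(U)$, then $\varphi\varphi^!$ would vanish (since $\varphi(QU)=0$), so $\delb=Q\oplus_{\varphi\varphi^!}s\sn Q$ would be a model of the \emph{trivial} sphere bundle rather than of $S\xi$, contradicting your own Gysin-sequence justification.

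This matters because your computation of $\overline{\Delta}$ hinges on the sentence ``applying $\pi\otimes\pi$ kills every tensor factor containing $U$,'' which is true only when $I=(U)$, i.e.\ when $\pi(U)=0$. With your stated $\varphi$, the correct projection sends $U\mapsto e$, the mixed blocks of $\Delta_P$ do not die, and what survives is $-\Delta_Q\cdot(e\otimes 1)$ --- which happens to be equivalent to the desired class up to sign and the diagonal symmetry, but not by the argument you give. The fix is simply to use the correct structure map $\bar z\mapsto e$ (the two conventions are exchanged by the CDGA involution $\bar z\mapsto e-\bar z$ of $P$, which is presumably the source of the confusion): then $\ker\varphi=Q(\bar z-e)$, its annihilator is $(\bar z)$ because $\bar z\cdot(\bar z-e)=\bar z^2-e\bar z=0$, so $I=(\bar z)=\bar z\cdot Q$ and $P/I\cong Q$ by killing $\bar z$; with this consistent setup your block-inversion computation of $\overline{\Delta}=\pm\,\Delta_Q\cdot(1\otimes e)$ goes through and agrees with the paper's, which obtains $\Delta_P=\sum_i(-1)^{|q_i|}\bigl(q_i\otimes q_i^\ast(e-\bar z)-q_i\bar z\otimes q_i^\ast\bigr)$ and hence $\overline{\Delta}_P=\Delta_Q\cdot(1\otimes e)$ directly from the dual basis.
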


Before proving this theorem, let us first deduce the rational homotopy
invariance of that configuration space.
\begin{cor}
\label{C:invConfDxi}
The rational homotopy type of the configuration space of 2 points in a disk bundle of even rank $\geq 4$ over a 2-connected closed manifold depends only on the rational homotopy type of the base and on the Euler class.
\end{cor}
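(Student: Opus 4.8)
The plan is to deduce the invariance directly from the explicit model furnished by Theorem~\ref{T:ConfDxi}. That theorem asserts that
\[
Q\otimes Q \bigoplus_{(\Delta_Q\cdot(1\otimes e))^!} ss^{-(m+2k)}Q
\]
is a CDGA model of $\Conf(D\xi,2)$, and hence determines its rational homotopy type. The whole construction is assembled from exactly three pieces of data: a Poincaré duality CDGA model $Q$ of the base $M$, its diagonal class $\Delta_Q\in(Q\otimes Q)^m$, and a cocycle $e\in Q^{2k}\cap\ker d_Q$ representing the Euler class of $\xi$ (together with the integers $m$ and $2k$). I would prove the corollary by showing that each of these ingredients is determined, up to the equivalences under which the model is invariant, by the rational homotopy type of $M$ and by the rational Euler class $e(\xi)\in H^{2k}(M;\bq)$. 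It is worth stressing that this is a genuine refinement of Corollary~\ref{c:rht_fw2}: two bundles with the same Euler class but different Pontryagin classes may have sphere bundles $S\xi$ of different rational homotopy type, so the pairs $(D\xi,S\xi)$ differ rationally, yet Theorem~\ref{T:ConfDxi} produces the same model. Thus the result really rests on Theorem~\ref{T:ConfDxi} and not merely on Corollary~\ref{c:rht_fw2}.

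First I would record the dependence of the data. Since $M$ is a $2$-connected closed manifold it admits a Poincaré duality CDGA model, and any such model is in particular a CDGA model of $M$; thus any two are connected by a zig-zag of CDGA quasi-isomorphisms, so $Q$ is determined up to quasi-isomorphism by the rational homotopy type of $M$. The diagonal class $\Delta_Q$ is intrinsic to $Q$ and the chosen orientation (it represents the class of the diagonal in $H^m(M\times M;\bq)$), so it too is fixed by the rational homotopy type of $M$. The integers $m=\dim M$ and $2k=\operatorname{rank}\xi$ are manifestly determined by the base and the bundle. Finally, the rational Euler class $e(\xi)$ is a homotopy invariant of $\xi$, and a cocycle representative $e\in Q^{2k}$ is determined by it up to a coboundary. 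Hence two disk bundles with rationally equivalent bases and corresponding Euler classes give rise to the same $Q$, the same $\Delta_Q$, and cohomologous cocycles $e,e'$.

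It then remains to see that the model is insensitive to the two residual choices: the cocycle representative $e$ within its cohomology class, and the quasi-isomorphism class of $Q$. Here I would invoke the robustness already built into Theorem~\ref{t:modele_adgc_FW2_1} and Remark~\ref{R:BB'}, which guarantee that the model $\btb\to\tau^{\leq 2(m+2k)-3}C(\delta^!)$ depends only on the quasi-isomorphism type of $Q$ and on the \emph{weak equivalence class} of the shriek map $\delta^!$. Concretely, if $e'=e+d\eta$ then the two $Q\otimes Q$-dgmodule morphisms $(\Delta_Q\cdot(1\otimes e))^!$ and $(\Delta_Q\cdot(1\otimes e'))^!$ differ by a boundary, with explicit chain homotopy given by multiplication by $\Delta_Q\cdot(1\otimes\eta)$ (using that $\Delta_Q$ is a cocycle); hence they are weakly equivalent and, by Remark~\ref{R:BB'}, yield quasi-isomorphic models. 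Similarly a quasi-isomorphism $Q\qi Q'$ of Poincaré duality models transports the diagonal class and the cocycle $e$ to cohomologous data on $Q'$, and induces a weak equivalence of the corresponding shriek maps, so the resulting mapping-cone models are again quasi-isomorphic.

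Combining these observations, the rational homotopy type of the model---and therefore, by Theorem~\ref{T:ConfDxi}, of $\Conf(D\xi,2)$---depends only on the rational homotopy type of $M$ and on the Euler class of $\xi$, which is the assertion of the corollary. I expect the main obstacle to be the invariance under change of the cocycle representative $e$: one must verify that the dgmodule-level chain homotopy promotes to a weak equivalence of shriek maps in the precise sense demanded by Remark~\ref{R:BB'}, so that the semi-trivial CDGA structures on the two mapping cones are genuinely matched up to CDGA quasi-isomorphism. Once this point is settled the remaining verifications are formal.
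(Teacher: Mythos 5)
Your main line of argument is correct and rests on the same pillar as the paper's proof: Theorem \ref{T:ConfDxi}. The paper's own proof is much shorter, however, because it exploits the freedom already built into that theorem rather than proving invariance of the model under all choices: given two bundles with rationally equivalent $2$-connected bases and corresponding Euler classes, one simply chooses \emph{the same} Poincar\'e duality model $Q$ (which exists by \cite{LaSt:PD}) and \emph{the same} cocycle $e$ for both bundles, whereupon Theorem \ref{T:ConfDxi} yields literally the same CDGA as a model of both configuration spaces; since these spaces are simply connected, their rational homotopy types agree. Your extra verifications are sound but not needed for the corollary: the change of representative $e\mapsto e+d\eta$ is indeed handled by the $Q\otimes Q$-linear homotopy given by multiplication by $\Delta_Q\cdot(1\otimes\eta)$, and the ``main obstacle'' you flag dissolves exactly as you suspect, because Theorem \ref{t:modele_adgc_FW2_1} and Remark \ref{R:BB'} only require the shriek map up to weak equivalence of dgmodule \emph{morphisms} --- the CDGA structure is then supplied by the theorem (both truncated cones model the same space), not transported through the chain homotopy.

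One genuine error, though peripheral to the logic: your motivating claim that this corollary strictly refines Corollary \ref{c:rht_fw2} because two bundles with equal Euler class but different Pontryagin classes may have sphere bundles $S\xi$ of different rational homotopy type is false. The fibre $S^{2k-1}$ is an odd sphere, hence rationally an Eilenberg--MacLane space, so the fibration $S\xi\to M$ over a simply connected base is rationally classified by its Euler class alone; concretely $(Q\otimes\Lambda z,\,dz=e)$ with $|z|=2k-1$ is a CDGA model of $S\xi$, and $Q\to (Q\otimes\Lambda z,\,dz=e)$ models the pair $(D\xi,S\xi)$, visibly depending only on $(Q,e)$ --- Pontryagin classes do not enter rationally. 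Since $S\xi$ is $2$-connected when $2k\geq 4$ and $M$ is $2$-connected, the corollary would in fact also follow from Corollary \ref{c:rht_fw2} combined with this observation; this is consonant with the pretty model $P=Q\otimes\Lambda\bar z/(\bar z^2-e\bar z)$ used in the proof of Theorem \ref{T:ConfDxi}, which likewise depends only on $(Q,e)$.
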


\begin{proof}[Proof of Corollary \ref{C:invConfDxi}]
By the main result of \cite{LaSt:PD}, the base of the bundle admits a
Poincar\'e duality CDGA model, $Q$.   Let $e\in Q \cap \ker(d_Q)$ be a
representative of the Euler class.   By Theorem \ref{T:ConfDxi}, a
CDGA model of the configuration space, 
and hence its rational homotopy type since it is simply connected, depends only on those data.
\end{proof}

\begin{proof}[Proof of Theorem \ref{T:ConfDxi}]
Denote by $\bar z$ a generator of degree $2k$ and define the CDGA
\[
P:=\left(\frac{Q\otimes \wedge \bar z}{(\bar z^2-e\bar z)}, D \bar z = 0\right)
\]
which is a Poincar\'e CDGA in dimension $n=m+2k$.  Define the CDGA morphism
\[
\varphi: P \longrightarrow Q
\]
by $\varphi (q_1+q_2 \bar z) = q_1+q_2 \cdot e$, for $q_1,q_2\in Q$.
 Then \cite[Theorem 4.1]{CLS:pretty} and its proof establishes that the pair $(D\xi,S\xi)$ admits a surjective pretty model associated to $\varphi$.   We will then use Theorem \ref{T:modele_FW2} to establish the model of $\Conf(D\xi,2)$.

Following the notation of \cite[proof of Theorem 4.1]{CLS:pretty}, one computes that
\[
I=\varphi^! (s^{-n}\# Q)=\Phi^! (s^{-2k}Q)=\bar z\cdot Q.
\]
We need to compute the truncated diagonal class $\overline \Delta \in
P/I \otimes P/I$.  Let $\{q_i\}$ be a homogeneous basis of $Q$ and let
$\{q^\ast_i\}$ be its
 Poincar\'e dual basis.  Denote by $\omega \in Q^m$ the fundamental class of $Q$, so that we have
\[
q_i\cdot q^\ast_j = \delta_{ij} \cdot \omega \mod Q^{<m}.
\]
Then
\begin{equation}
\label{E:basisP}
\{q_i\} \cup \{q_i\cdot \bar z\}
\end{equation}
is a homogeneous basis of $P$ and  $-\omega\bar z$ is a
fundamental class of $P$. Then the Poincar\'e dual basis of \eqref{E:basisP} is given by
\[
\{q^\ast_i \cdot (e-\bar z)\} \cup \{-q^\ast_i\}
\]
because of the four equations
\[
\begin{aligned}
q_i \cdot q^\ast_j (e-\bar z ) &= -\delta_{ij} \omega \bar z& \mod P^{<n},
\\
q_i\cdot (-q^\ast_j) &=0& \mod P^{<n},
\\
(q_i\bar z) \cdot (q^\ast_j (e-\bar z)) &= q_iq^\ast_j (\bar z e-\bar z^2) = 0& \mod P^{<n},
\\
(q_i \bar z) \cdot (- q^\ast_j) &= -q_i q^\ast_j \bar z = -\delta_{ij} \omega \bar z&\mod P^{<n}.
\end{aligned}
\]
Therefore the diagonal class in $P$ is given by
\[
\Delta_P = \sum_i (-1)^{|q_i|} (q_i \otimes q^\ast_i (e-\bar z)-q_i\bar z \otimes q^\ast_i)\in P\otimes P,
\]
and, since $I=Q\bar z$, the truncated diagonal class is
\[
\overline \Delta_P = \sum_i (-1)^{|q_i|} q_i \otimes q^\ast_i e \in P/I \otimes P/I.
\]
The diagonal class of $Q$ is
\[
\Delta_Q = \sum_i (-1)^{|q_i|} q_i \otimes q^\ast_i \in Q\otimes Q,
\]
therefore, using the canonical isomorphism $P/I \cong Q$, we have
\[
\overline \Delta_P = \Delta_Q \cdot (1\otimes e).
\]
The theorem is then a direct consequence of Theorem \ref{T:modele_FW2}.
\end{proof}

Note that the total space, $E\xi$, of the vector bundle $\xi$ is homeomorphic to the interior of $D\xi$, and therefore $\Conf(E\xi,2)\simeq \Conf(D\xi,2)$.   In particular, when the bundle is trivial the above gives a model of $\Conf(M\times \br^{2k},2)$.     Hence we recover partially the result \cite[Theorem1]{CoTa:BAMS}.

Interestingly enough\PL{check all the examples} we get different models when the bundle is not trivial.    Consider for example the quaternionic  Hopf line bundle, $\eta$, over $S^4$, of rank 4.
In that case we can take $Q=(\bq[x]/(x^2),d_Q=0)$, with $\deg(x)=4$, as a model for $S^4$
and the Euler class is represented by $e=x$.   Using the model of
Theorem \ref{T:ConfDxi}
 one computes easily that the rational cohomology algebra of $\Conf
 (D\eta,2)$ is the same as
 $H^\ast (S^4 \vee S^4 \vee S^{11};\bq)$, but $\Conf(D\eta,2)$ is
 \emph{not} formal because it admits  a non trivial Massey product in degree 11.

By contrast, for the trivial bundle of
rank $4$ over $S^4$, $\epsilon=S^4\times\BR^4$,  one computes that
$\Conf(D\epsilon,2)$ is formal and its rational cohomology algebra is given by
\[
H^\ast(\Conf(S^4\times \br^4,2);\bq) \cong \frac{\wedge(x,x',u)}{(x^2,x^{'2},ux-ux')}.
\]
with $\deg(x)=\deg(x')=4$ and $\deg(u)=7$.\PL{le cas $k=2$ n'est pas
  couvert par la prop. ce serait intéressant de construire un modèle
  de ces fibrés en cercles.}

Thus the two compact manifolds $D\eta$ and $D\epsilon$ of dimension
$8$ are homotopy equivalent but their configuration spaces have
different Poincar\'e series. This is because their boundaries,
$\partial D\eta=S^7$ and $\partial D\epsilon=S^4\times S^3$, are not
homotopy equivalent.

\subsection{A CDGA model for $\Conf(W,2)$ when $W$ is the complement of a subpolyhedron in a closed manifold}

Let $M$ be a 2-connected closed manifold of dimension $n$.  Let $K\subset M$ be a 2-connected subpolyhedron such that $\dim M \geq 2 \dim (K)+3$.  In this section we explain how to build a CDGA model of $\Conf(M\backslash K,2)$.

Let $T$ be a regular neighborhood of $K$ in $M$, in other words $T$ is
a compact codimension $0$ submanifold of $M$ that retracts by
deformation on $K$.   Then let $W$ be the closure of $ M\backslash T$
in $M$, which is a compact manifold with boundary $\partial
W= \partial T$.   The interior of $W$ is homeomorphic to $M\backslash
K$.    Therefore $\Conf(M\backslash K,2)$ is homotopy equivalent to
$\Conf(W,2)$. 

  Let us recall how to build a pretty surjective model of $(W,\partial W)$.
By \cite[Proposition 5.4]{CLS:pretty} one can construct a surjective CDGA model
\[
\varphi : P \twoheadrightarrow Q
\]
of $K \hookrightarrow M$, where $P$ is a Poincar\'e duality CDGA and
$Q^{\geq n/2-1}=0$.
As explained in \cite[end of Section 3]{CLS:pretty}, the main result of \cite{LaSt:PE} implies that the pretty model associated to $\varphi$,
\[
\varphi \oplus \id : P \bigoplus_{\varphi^!} ss^{-m} \# Q \longrightarrow Q \bigoplus_{\varphi\varphi^!} ss^{-n} \# Q
\]
is a CDGA model of $(W,\partial W)$.   Therefore, by Theorem \ref{T:modele_FW2}, a CDGA model of $\Conf(M\backslash K,2)$ is given by
\[
P/I \otimes P/I \bigoplus_{\overline \Delta^!} ss^{-n}(P/I).
\]

Let us illustrate this for the configuration space of a punctured
manifold.  Let $M$ be a closed 2-connected manifold and set
$W=M\backslash \{x_0\}$, with $x_0\in M$.   Let $P$ be a Poincar\'e duality CDGA model of $M$ with fundamental class
\[
\omega \in P^n.
\]
Pick a homogeneous basis $\{a_i\}_{0\leq i\leq N}$ of $P$ with $a_0=1$
and $a_N=\omega$. Let $\{a_i^*\}$ be the Poincar\'e dual basis. Then the
 diagonal class is
\[
\Delta = 1 \otimes \omega + (-1)^n \omega\otimes 1 + \sum^{N-1}_{i=1} a_i \otimes a^\ast_i
\]
 with $1\leq \deg (a_i) \leq n-1$, for $1\leq i\leq N-1$.   In that case we can take $I=\bq \cdot \omega$ and we get that
 \[
 \overline P = P/(\bq\cdot \omega)
 \]
is a CDGA model of $M\backslash \{x_0\}$ and the truncated diagonal is 
\[
\overline \Delta = \sum^{N-1}_{i=1} a_i \otimes a^\ast_i.
\]
Thus
\[
\overline P \otimes \overline P \bigoplus_{\overline \Delta^!} ss^{-n} \overline P
\]
is a CDGA model of $\Conf(M\backslash \{x_0\},2)$.

\bibliographystyle{amsplain}

\begin{thebibliography}{10}

\bibitem{CoTa:BAMS}
F.~R. Cohen and L.~R. Taylor, \emph{Configuration spaces: applications to
  {G}elfand-{F}uks cohomology}, Bull. Amer. Math. Soc. \textbf{84} (1978),
  no.~1, 134--136. \MR{0461522 (57 \#1507)}

\bibitem{Cor:FM2-1conn}
Hector Cordova~Bulens, \emph{Rational model of the configuration space of two
  points in a simply connected closed manifold}, To appear in Proc. American
  Math. Soc.

\bibitem{CLS:ConfWk}
Hector Cordova~Bulens, Pascal Lambrechts, and Don Stanley, \emph{Models for
  configuration spaces in manifolds with boundary}, In preparation.

\bibitem{CLS:pretty}
\bysame, \emph{Pretty rational models for poincar\'e duality pairs}, Available
  on arXiv.

\bibitem{Cur:SHT}
Edward~B. Curtis, \emph{Simplicial homotopy theory}, Advances in Math.
  \textbf{6} (1971), 107--209 (1971). \MR{0279808 (43 \#5529)}

\bibitem{FHT:RHT}
Yves F{\'e}lix, Stephen Halperin, and Jean-Claude Thomas, \emph{Rational
  homotopy theory}, Graduate Texts in Mathematics, vol. 205, Springer-Verlag,
  New York, 2001. \MR{MR1802847 (2002d:55014)}

\bibitem{Hud:PLT}
J.~F.~P. Hudson, \emph{Piecewise linear topology}, University of Chicago
  Lecture Notes prepared with the assistance of J. L. Shaneson and J. Lees, W.
  A. Benjamin, Inc., New York-Amsterdam, 1969. \MR{MR0248844 (40 \#2094)}

\bibitem{LaSt:FM2}
Pascal Lambrechts and Don Stanley, \emph{The rational homotopy type of
  configuration spaces of two points}, Ann. Inst. Fourier (Grenoble)
  \textbf{54} (2004), no.~4, 1029--1052. \MR{2111020 (2005i:55016)}

\bibitem{LaSt:PE}
\bysame, \emph{Algebraic models of {P}oincar\'e embeddings}, Algebr. Geom.
  Topol. \textbf{5} (2005), 135--182 (electronic). \MR{MR2135550 (2006g:55012)}

\bibitem{LaSt:PD}
\bysame, \emph{Poincar\'e duality and commutative differential graded
  algebras}, Ann. Sci. \'Ec. Norm. Sup\'er. (4) \textbf{41} (2008), no.~4,
  495--509. \MR{2489632 (2009k:55022)}

\bibitem{LaSt:remFMk}
\bysame, \emph{A remarkable {DG}module model for configuration spaces}, Algebr.
  Geom. Topol. \textbf{8} (2008), no.~2, 1191--1222. \MR{2443112 (2009g:55011)}

\bibitem{LoSa:con}
Riccardo Longoni and Paolo Salvatore, \emph{Configuration spaces are not
  homotopy invariant}, Topology \textbf{44} (2005), no.~2, 375--380.
  \MR{MR2114713 (2005k:55024)}

\bibitem{Wei:HA}
Charles~A. Weibel, \emph{An introduction to homological algebra}, Cambridge
  Studies in Advanced Mathematics, vol.~38, Cambridge University Press,
  Cambridge, 1994. \MR{MR1269324 (95f:18001)}

\end{thebibliography}
\def\cprime{$'$}
\providecommand{\bysame}{\leavevmode\hbox to3em{\hrulefill}\thinspace}
\providecommand{\MR}{\relax\ifhmode\unskip\space\fi MR }
\providecommand{\MRhref}[2]{%
  \href{http://www.ams.org/mathscinet-getitem?mr=#1}{#2}
}
\providecommand{\href}[2]{#2}

\end{document}